\def\N{{\mathbb N}}
\def\Z{{\mathbb Z}}
\def\Qq{{\mathbb Q}}
\def\R{{\mathbb R}}
\def\C{{\mathbb C}}
\def\H{{\mathfrak H}}
\def\sq{\hbox{\rlap{$\sqcap$}$\sqcup$}}
\def\qed{\ifmmode\sq\else{\unskip\nobreak\hfil
         \penalty50\hskip1em\null\nobreak\hfil\sq
         \parfillskip=0pt\finalhyphendemerits=0\endgraf}\fi}
\def\smat#1#2#3#4{\left(\begin{smallmatrix}#1&#2\\#3&#4\end{smallmatrix}\right)}
\newtheorem{theorem}{Theorem}
\newtheorem{lemma}[theorem]{Lemma}
\newtheorem{prop}[theorem]{Proposition}
\newtheorem{cor}[theorem]{Corollary}
\numberwithin{theorem}{section}
\numberwithin{equation}{section}
\title[
Cohen-Eisenstein series of degree two and of degree three
]
{
Maass relations for generalized Cohen-Eisenstein series
of degree two and of degree three
}
\author[S.~Hayashida]{Shuichi Hayashida
}
\date{\today}
\keywords{Siegel modular forms, Jacobi forms, Maass relation}
\subjclass[2010]{11F46 (primary), 11F37, 11F50 (secondary)}
\begin{document}

\begin{abstract}
The aim of this paper is to generalize the Maass relation
for generalized Cohen-Eisenstein series of degree two and of degree three.
Here the generalized Cohen-Eisenstein series are certain Siegel modular forms of half-integral
weight, and generalized Maass relations are certain relations among Fourier-Jacobi coefficients
of them.
\end{abstract}

\maketitle

\section{Introduction}\label{s:introduction}
\subsection{}
The generalized Cohen-Eisenstein series are certain Siegel modular forms of half-integral weight,
which have been introduced by Arakawa~\cite{Ar3}.
Originally the Cohen-Eisenstein series have been introduced by Cohen~\cite{Co}
as one variable functions.
In the case of degree one, it is known that the Cohen-Eisenstein series correspond to the Eisenstein series
with respect to $\mbox{SL}(2,\Z)$ by the Shimura correspondence.
On the other hand,
in arbitrary degree we can identify the generalized Cohen-Eisenstein series with Jacobi-Eisenstein series of index $1$.
Hence, we can expect some significant properties on the generalized Cohen-Eisenstein series.

This article is devoted to show generalized Maass relations for generalized Cohen-Eisenstein series
of degree two (Theorem~\ref{thm:deg2}) and of degree three (Theorem~\ref{thm:deg3}).
Here, the generalized Maass relations are certain relations among Fourier coefficients,
and these relations are equivalent to certain relations among Fourier-Jacobi coefficients.

It is known that a certain kind of Siegel modular forms of degree two 
are obtained from elliptic modular forms through the Saito-Kurokawa lift.
Such Siegel modular forms are characterized by the Maass relation.
The generalized Maass relations for generalized Cohen-Eisenstein series of degree three (Theorem \ref{thm:deg3}) are
applied for the images of the Duke-Imamoglu-Ikeda lift.
It means that certain Siegel cusp forms of half-integral weight of degree three satisfy generalized Maass relations.
These relations
give a key of the proof for a certain lifting
which is a lifting from pairs of two elliptic modular forms
to Siegel modular forms of half-integral weight of degree two (cf.~\cite{half_deg2_lift}.)
This lifting has been conjectured in~\cite{HI}.

Because generalized Maass relations in this paper are expressed as relations among Fourier-Jacobi coefficients,
we shall explain the Fourier-Jacobi coefficients of Siegel modular forms.
Let $F$ be a Siegel modular form of degree $n$ of integral weight or half-integral weight,
which is a holomorphic function on the Siegel upper half space $\H_n$ of size $n$.
We consider an expansion:
\begin{eqnarray*}
 F\left(\begin{pmatrix} \tau_{n-m} & z \\ {^t z} & \omega_m \end{pmatrix} \right)
 &=&
 \sum_{\mathcal{S} \in Sym_m^*} \phi_{\mathcal{S}}(\tau_{n-m},z) \, e^{2\pi \sqrt{-1} \, tr(\mathcal{S} \omega_m)},
\end{eqnarray*}
where $\mbox{Sym}_m^*$ denotes the set of all half-integral symmetric matrices of size $m$,
and where $\smat{\tau_{n-m}}{z}{^t z}{\omega_m} \in \H_n$, $\tau_{n-m} \in \H_{n-m}$, $\omega_m \in \H_m$ and
$z \in M_{n-m,m}(\C)$.
Then $\phi_{\mathcal{S}}$ is a Jacobi form of index $\mathcal{S}$ of degree $n-m$.
The above expansion is called the \textit{Fourier-Jacobi expansion} of $F$.
Moreover, the above forms $\phi_{\mathcal{S}}$ are called
\textit{Fourier-Jacobi coefficients} of $F$.
The generalized Maass relations are certain relations among $\phi_{\mathcal{S}}$, and such relations
are equivalent to certain relations among Fourier coefficients of $F$.

The generalized Maass relations in this paper are identified to certain generalized Maass relations for Siegel-Eisenstein series
of \textit{integral weight} of degree three and of degree four.
Actually this identification is a key of the proof of our result.
As for generalizations of the Maass relation for Siegel-Eisenstein series of integral weight of higher degree,
Yamazaki \cite{Ya,Ya2} obtained certain relations among Fourier-Jacobi coefficients of
Siegel-Eisenstein series of arbitrary degree.
Our generalization is different from his result, because he showed relations among Fourier-Jacobi coefficients with integer index,
while our generalization in this paper concerns with the Fourier-Jacobi coefficients with
$2\times 2$ matrix index.
In his paper~\cite{Ko2} W.Kohnen gives also a generalization of the Maass relation for Siegel modular forms
of even degree $2n$. However, his result is also different from our generalization,
because his result is concerned with the Fourier-Jacobi coefficients
with $(2n-1) \times (2n-1)$ matrix index.

For our purpose we generalize some results in \cite{Ya,Ya2} on Fourier-Jacobi coefficients of Siegel-Eisenstein
series of integer indices to $2\times 2$ matrix indices.
Here the right-lower part of these $2\times 2$ matrices is $1$,
and we need to introduce certain index-shift maps on Jacobi forms of $2\times 2$ matrix indices.
For the calculation of the action of index-shift maps on Fourier-Jacobi coefficients of Siegel-Eisenstein series,
we require certain relations between Jacobi-Eisenstein series and Fourier-Jacobi coefficients of Siegel-Eisenstein series.
This relation is basically shown in \cite[Satz7]{Bo}.
We also need to show certain identity relation between Jacobi forms of integral weight of $2\times 2$ matrix index
and Jacobi forms of half-integral weight of integer index.
Moreover, we need to show a compatibility between this identity relation and index-shift maps.
Through these relations, the generalized Maass relation of
generalized Cohen-Eisenstein series are equivalent to certain relations among Jacobi-Eisenstein series
of integral weight of $2\times 2$ matrix indices. 
Finally, we calculate the action of index-shift maps on Jacobi-Eisenstein series of integral weight
of $2\times 2$ matrix index.
Because of this calculation, we have to restrict ourself to Jacobi-Eisenstein series of degree one or degree two.
It means we have to restrict ourself to generalized Cohen-Eisenstein series of degree two or degree three.

\subsection{}
We explain our results more precisely.
Let $k$ be an \textit{even} integer and $\mathcal{H}_{k-\frac12}^{(n+1)}$ be
the generalized Cohen-Eisenstein series of degree $n+1$ of weight $k-\frac12$
(see \S\ref{ss:fj_expansion_half} for the definition.)

For integer $m$,
we denote by $e_{k,m}^{(n)}$ the $m$-th Fourier-Jacobi coefficient of $\mathcal{H}_{k-\frac12}^{(n+1)}$:
\begin{eqnarray*}
 \mathcal{H}_{k-\frac12}^{(n+1)}\!\!\left(\begin{pmatrix} \tau & z \\ ^t z & \omega \end{pmatrix}\right)
&=&
 \sum_{\begin{smallmatrix} m \geq 0 \\ m \equiv 0,3 \!\!\! \mod 4  \end{smallmatrix}}
 e_{k,m}^{(n)}(\tau,z)\, e^{2\pi \sqrt{-1} m \omega},
\end{eqnarray*}
where $\tau \in \H_n$ and $\omega \in \H_1$, and where $\H_n$ denotes the Siegel upper half space
of degree $n$.
It will be shown that $e_{k,m}^{(n)}$ belongs to $J_{k-\frac12,m}^{(n)*}$ (cf. \S\ref{ss:fj_expansion_half}),
where $J_{k-\frac12,m}^{(n)*}$ denotes a certain subspace of $J_{k-\frac12,m}^{(n)}$,
and $J_{k-\frac12,m}^{(n)}$ denotes the space of all Jacobi forms of degree $n$ of weight $k-\frac12$ of index $m$
(cf. \S\ref{ss:def_jacobi_half_weight}).
Because $\mathcal{H}_{k-\frac12}^{(n+1)}$ belongs to the generalized plus-space (cf. \S\ref{ss:fj_expansion_half}),
we have $e_{k,m}^{(n)} = 0$ unless $m \equiv 0$, $3 \!\! \mod 4$.
We note that we use the symbol $e_{k,m}^{(n)}$ instead of using $e_{k-\frac12,m}^{(n)}$ for the sake of simplicity.
We remark that the weight of the form $e_{k,m}^{(n)}$ is not $k$, but $k-\frac12$.

We define a function $e_{k,m}^{(n)}|S_p^{(n)}$ of $(\tau,z) \in \H_n \times \C^{(n,1)}$ by
\begin{eqnarray*}
 (e_{k,m}^{(n)}|S_p^{(n)})(\tau,z)
 &:=&
 e_{k,mp^2}^{(n)}(\tau,z) 
  + \left(\frac{-m}{p}\right) p^{k-2}\, e_{k,m}^{(n)}(\tau,pz) + p^{2k-3}\, e_{k,\frac{m}{p^2}}^{(n)}(\tau,p^2 z).
\end{eqnarray*}
Here we regard $ e_{k,\frac{m}{p^2}}^{(n)}$ as zero,
if $\frac{m}{p^2}$ is not an integer or $\frac{m}{p^2} \not \equiv 0$, $3 \mod 4$.
Moreover, $\left(\frac{ * }{p}\right)$ denotes the Legendre symbol for odd prime $p$,
and $\left(\frac{a}{2}\right)$ $:=$ $0,1,-1$ accordingly as $a$ is even,
$a \equiv \pm 1$ mod $8$ or $a \equiv \pm 3$ mod $8$.

For any prime $p$, we introduce index-shift maps $V_p^{(1)}$, $V_{1,p}^{(2)}$ and $V_{2,p}^{(2)}$
in \S\ref{ss:hecke_V}, which are certain linear maps
 $V_p^{(1)} : J_{k-\frac12,m}^{(1)*} \rightarrow J_{k-\frac12,mp^2}^{(1)}$
and
 $V_{i,p}^{(2)} : J_{k-\frac12,m}^{(2)*} \rightarrow J_{k-\frac12,mp^2}^{(2)} \quad (i=1,2)$.
These maps are generalizations of the $V_l$-map in \cite[p. 43]{EZ}
for half-integral weight of degree $1$ and of degree $2$.

The problem now is to express $e_{k,m}^{(1)}|V_{p}^{(1)}$ and $e_{k,m}^{(2)}|V_{i,p}^{(2)}$
as linear combinations of two forms $e_{k,m}^{(n)}$ and $e_{k,m}^{(n)}|S_p^{(n)}$.
For the degree $n=1$ we obtain the following theorem.
\begin{theorem}
\label{thm:deg2}
For any prime $p$, we obtain
\begin{eqnarray*}
 e_{k,m}^{(1)}|V_{p}^{(1)}
 &=&
 e_{k,m}^{(1)}|S_p^{(1)}.
\end{eqnarray*}
\end{theorem}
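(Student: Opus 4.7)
The plan is to translate the identity from half-integral weight of integer index into the realm of integral-weight Jacobi forms of $2\times 2$ matrix index, where it becomes a statement about Fourier-Jacobi coefficients of the Siegel-Eisenstein series of degree three and integral weight $k$.

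First I would invoke the identity relation alluded to in the introduction between the generalized plus-space $J_{k-\frac12,m}^{(1)*}$ of half-integral-weight Jacobi forms and a certain subspace of Jacobi forms of integral weight $k$ and $2\times 2$ matrix index $\mathcal{S}$ with lower-right entry equal to $1$ and $-\det(2\mathcal{S}) \equiv -m \bmod{*}$. Under this correspondence, the generalized Cohen-Eisenstein Fourier-Jacobi coefficient $e_{k,m}^{(1)}$ is matched with the Jacobi-Eisenstein series $E_{k,\mathcal{S}}^{(1)}$ of integral weight and matrix index $\mathcal{S}$. By Böcherer's \cite[Satz7]{Bo}, $E_{k,\mathcal{S}}^{(1)}$ is the $\mathcal{S}$-th Fourier-Jacobi coefficient of the Siegel-Eisenstein series of weight $k$ and degree three.

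Second, I would verify the compatibility, advertised in the introduction, between the half-integral-weight index-shift $V_p^{(1)}$ and the natural matrix-index analogue on $J_{k,\mathcal{S}}^{(1)}$, and similarly for the three summands entering $S_p^{(1)}$. This compatibility turns the claim into an identity between the image of $E_{k,\mathcal{S}}^{(1)}$ under an integral-weight index-shift on one side, and a linear combination of $E_{k,p^2\mathcal{S}}^{(1)}$ with two "descent" Jacobi-Eisenstein series on the other side.

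Third, I would establish that matrix-index identity by expanding the defining coset sum and regrouping the cosets according to the rank modulo $p$ of a certain integral matrix. The three ranks produce the three terms of $e_{k,m}^{(1)}|S_p^{(1)}$, and the quadratic character $\left(\frac{-m}{p}\right)$ arises from a Gauss-sum style count that records whether the quadratic form associated to $\mathcal{S}$ is isotropic modulo $p$.

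The main obstacle is the last step: the coset calculation is essentially the incarnation of the Maass-type relation of Yamazaki \cite{Ya,Ya2} for $2\times 2$ matrix indices rather than integer indices, and a careful bookkeeping of Jacobi cocycles is required to match the middle term (with its Legendre symbol and weight factor $p^{k-2}$) precisely. It is exactly this combinatorial step that restricts the method to degree $n=1$ here and to degree $n=2$ in Theorem~\ref{thm:deg3}.
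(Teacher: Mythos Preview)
Your overall architecture matches the paper's: pass to integral weight with $2\times 2$ matrix index via the map $\iota_{\mathcal{M}}$, use the compatibility of $\iota_{\mathcal{M}}$ with the index-shift maps (this is Proposition~\ref{prop:iota_hecke}), and then compute the action of $V_{1,0}(p^2)$ on Jacobi-Eisenstein series by a coset/rank decomposition with Gauss sums (this is Lemma~\ref{lemma:E_V1}, built on Proposition~\ref{prop:tilde_kij}). So the strategy is right.

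However, there is a genuine gap in your first step. You assert that under the correspondence $\iota_{\mathcal{M}}$ the form $e_{k,m}^{(1)}$ matches the Jacobi-Eisenstein series $E_{k,\mathcal{S}}^{(1)}$, and that by \cite[Satz~7]{Bo} this \emph{is} the $\mathcal{S}$-th Fourier-Jacobi coefficient of $E_k^{(3)}$. Neither claim is correct. What $\iota_{\mathcal{M}}$ identifies with $e_{k,m}^{(1)}$ is the Fourier-Jacobi coefficient $e_{k,\mathcal{M}}^{(1)}$ of $E_k^{(3)}$, and B\"ocherer's formula (worked out here as Proposition~\ref{prop:fourier_jacobi}) gives
\[
 e_{k,\mathcal{M}}^{(1)}(\tau,z)
 =
 \sum_{d|f} g_k\!\left(\tfrac{m}{d^2}\right)\, E_{k,\mathcal{M}[{^t W_d}^{-1}]}^{(1)}(\tau,z W_d),
\]
a nontrivial linear combination of Jacobi-Eisenstein series indexed by the divisors of the conductor $f$ of $-m$. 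Only for index $1$ does the Fourier-Jacobi coefficient coincide with a single Jacobi-Eisenstein series. Because of this, your third step, even if carried out correctly for a single $E_{k,\mathcal{S}}^{(1)}$, does not yet yield the theorem: one must apply the coset computation termwise in the sum over $d\mid f$ and then reassemble the result into $e_{k,mp^2}^{(1)}$, $e_{k,m}^{(1)}|U_p$, and $e_{k,m/p^2}^{(1)}|U_{p^2}$. That reassembly is where the arithmetic of the coefficients $g_k$ enters, via the key multiplicativity identity $g_k(p^2m)=\bigl(p^{2k-3}-\left(\tfrac{-m}{p}\right)p^{k-2}\bigr)g_k(m)$ (Lemma~\ref{lemma:gk}); without it the three pieces do not line up with $S_p^{(1)}$. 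This divisor-sum bookkeeping is precisely the content of \S\ref{ss:pf:deg2} and is missing from your outline.
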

The proof of this theorem will be given in \S\ref{s:thm:deg2}.

Let $\mathcal{H}_{k-\frac12}^{(2)}(Z) = \sum_{N} C(N)\, e^{2\pi \sqrt{-1} tr(NZ)}$
be the Fourier expansion of $\mathcal{H}_{k-\frac12}^{(2)}$,
where $N$ runs over all half-integral symmetric matrices of size $2$.
The identity in Theorem~\ref{thm:deg2} is translated to relations among Fourier coefficients of $\mathcal{H}_{k-\frac12}^{(2)}$
as follows.
\begin{cor}\label{cor:fourier_deg2}
For any prime $p$ and for any half-integral symmetric matrix
$\left(\begin{smallmatrix} n & \frac{r}{2} \\ \frac{r}{2} & m \end{smallmatrix}\right)$ of size $2$,
we obtain
\begin{eqnarray*}
&& C\!\begin{pmatrix} np^2 & \frac{r}{2} \\ \frac{r}{2} & m \end{pmatrix}
 +
 \left(\frac{-n}{p}\right) p^{k-2}\,
 C\!\begin{pmatrix} n & \frac{r}{2p}  \\ \frac{r}{2p}  & m \end{pmatrix}
 +
 p^{2k-3}\,
 C\!\begin{pmatrix} \frac{n}{p^2} & \frac{r}{2p^2}  \\ \frac{r}{2p^2}  & m \end{pmatrix} \\
&=&
 C\!\begin{pmatrix} n & \frac{r}{2} \\ \frac{r}{2} & mp^2 \end{pmatrix}
 +
 \left(\frac{-m}{p}\right) p^{k-2}\,
 C\!\begin{pmatrix} n & \frac{r}{2p}  \\ \frac{r}{2p}  & m \end{pmatrix}
 +
 p^{2k-3}\,
 C\!\begin{pmatrix} n & \frac{r}{2p^2}  \\ \frac{r}{2p^2}  & \frac{m}{p^2} \end{pmatrix},
\end{eqnarray*}
where we regard $C(M)$ as $0$ if the matrix $M$ is not a half-integral symmetric matrix.
\end{cor}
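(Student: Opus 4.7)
The plan is to derive Corollary~\ref{cor:fourier_deg2} as a direct translation of Theorem~\ref{thm:deg2} into the language of Fourier coefficients, by computing the Fourier expansion of each of the two Jacobi forms $e^{(1)}_{k,m}|V_p^{(1)}$ and $e^{(1)}_{k,m}|S_p^{(1)}$ appearing in the theorem and matching $(n,r)$-th coefficients.

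First, from the definition of Fourier--Jacobi coefficients one reads off the expansion
\begin{eqnarray*}
e^{(1)}_{k,m}(\tau,z)
 &=&
 \sum_{n,r} C\!\begin{pmatrix} n & \tfrac{r}{2} \\ \tfrac{r}{2} & m \end{pmatrix}
 e^{2\pi\sqrt{-1}(n\tau + rz)},
\end{eqnarray*}
so each Fourier coefficient of $e^{(1)}_{k,m}$ is exactly one of the coefficients $C(N)$ that appear in the corollary. Next, the operator $S_p^{(1)}$ is defined explicitly in terms of the substitutions $z\mapsto z$, $z\mapsto pz$, $z\mapsto p^2 z$ on the three summands, so scaling the $z$-exponent and reindexing yields directly that the $(n,r)$-th Fourier coefficient of $e^{(1)}_{k,m}|S_p^{(1)}$ is the right-hand side of the corollary, with the stated convention $C(M)=0$ whenever $M$ fails to be half-integral (in particular whenever $\frac{r}{2p}$ or $\frac{r}{2p^2}$ is not a half-integer, or $\frac{m}{p^2}$ is not an integer).

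For the left-hand side, I would extract the Fourier expansion of $e^{(1)}_{k,m}|V_p^{(1)}$ from the definition of the index-shift map $V_p^{(1)}$ given in \S\ref{ss:hecke_V}. By construction $V_p^{(1)}$ is a finite sum of weight-$(k-\frac12)$ slash operators indexed by explicit coset representatives, and its action on the $q^n\zeta^r$ coefficient of a Jacobi form in $J^{(1)*}_{k-\frac12,m}$ picks up the three terms
\begin{eqnarray*}
 C\!\begin{pmatrix} np^2 & \tfrac{r}{2} \\ \tfrac{r}{2} & m \end{pmatrix}
 + \left(\frac{-n}{p}\right)p^{k-2}\,
 C\!\begin{pmatrix} n & \tfrac{r}{2p} \\ \tfrac{r}{2p} & m \end{pmatrix}
 + p^{2k-3}\,
 C\!\begin{pmatrix} \tfrac{n}{p^2} & \tfrac{r}{2p^2} \\ \tfrac{r}{2p^2} & m \end{pmatrix},
\end{eqnarray*}
i.e.\ the mirror image of the $S_p^{(1)}$-side in which the roles of the upper-left and lower-right entries of the index matrix are exchanged. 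The weights $1$, $\left(\frac{-n}{p}\right)p^{k-2}$, $p^{2k-3}$ are precisely those that appear in the action of the Eichler--Zagier $V_p$-map adapted to the plus-space, and they reappear here because $V_p^{(1)}$ is designed as the half-integral-weight analogue of that classical operator.

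With both Fourier expansions in hand, Theorem~\ref{thm:deg2} asserts equality of the two Jacobi forms, so comparing the coefficients of $e^{2\pi\sqrt{-1}(n\tau+rz)}$ gives exactly the identity of the corollary. The only step requiring any genuine work is the second one, namely the explicit computation of the Fourier coefficients of $e^{(1)}_{k,m}|V_p^{(1)}$; I expect this to reduce to a routine but careful bookkeeping over the coset representatives defining $V_p^{(1)}$, checking in particular that matrices $N$ with non-integral or non-half-integral entries contribute zero (so the convention $C(M)=0$ in the statement is consistent) and that the quadratic character $\left(\frac{-n}{p}\right)$ emerges correctly from the plus-space transformation law.
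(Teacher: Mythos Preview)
Your approach is correct and is exactly what the paper intends: it states the corollary simply as ``the identity in Theorem~\ref{thm:deg2} translated to relations among Fourier coefficients'' and gives no further argument, so you are filling in precisely the details the paper omits. Your computation of the $S_p^{(1)}$ side is immediate from the definition, and your claimed formula for the Fourier coefficients of $e^{(1)}_{k,m}|V_p^{(1)}$ is indeed the half-integral-weight analogue of the classical $V_p$ action; the paper itself confirms this implicitly in \S\ref{ss:cor:deg2}, where it uses that $(e^{(1)}_{k,m}|V_p^{(1)})(\tau,0)$ coincides with $e^{(1)}_{k,m}(\cdot,0)|T_1(p^2)$, whose coefficient formula (given there) specializes your three-term expression at $z=0$.
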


Because $e_{k,m}^{(1)}$ corresponds to a Fourier-Jacobi coefficient
of Siegel-Eisenstein series of weight $k$ of degree three,
we can regard the above identity also as a certain relation among Fourier coefficients
of Siegel-Eisenstein series of degree three.

As the second corollary of Theorem \ref{thm:deg2} we have the followings.
\begin{cor}\label{cor:deg2}
We obtain
\begin{eqnarray*}
 \mathcal{H}_{k-\frac12}^{(2)}\!\!\left.\left(\begin{pmatrix} \tau & 0 \\ 0 & \omega \end{pmatrix}\right)\right|_{\tau}T_1(p^2)
 &=&
 \mathcal{H}_{k-\frac12}^{(2)}\!\!\left.\left(\begin{pmatrix} \tau & 0 \\ 0 & \omega \end{pmatrix}\right)\right|_{\omega}T_1(p^2).
\end{eqnarray*}
Here, in LHS we regard
$\mathcal{H}_{k-\frac12}^{(2)}\!\!\left(\begin{pmatrix} \tau & 0 \\ 0 & \omega \end{pmatrix}\right)$
as a function of $\tau \in \H_1$,
while we regard it as a function of $\omega \in \H_1$ in RHS,
and where $T_1(p^2)$ denotes the Hecke operator acting on the Kohnen plus-space (cf. \cite[p.250]{Ko} or \S\ref{ss:cor:deg2}).
\end{cor}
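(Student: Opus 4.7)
The plan is to obtain Corollary~\ref{cor:deg2} as the $r=0$ specialization of Corollary~\ref{cor:fourier_deg2}, reinterpreted in terms of the Kohnen Hecke operator $T_1(p^2)$. Write $F(\tau, \omega) := \mathcal{H}_{k-\frac12}^{(2)}\smat{\tau}{0}{0}{\omega}$, which has Fourier expansion
\begin{equation*}
F(\tau, \omega) \;=\; \sum_{n, m} C\smat{n}{0}{0}{m}\, e^{2\pi \sqrt{-1}\, (n\tau + m\omega)}.
\end{equation*}
The modularity of $\mathcal{H}_{k-\frac12}^{(2)}$ under the two block embeddings $\mbox{SL}(2,\Z)\hookrightarrow \mbox{Sp}(4,\Z)$ acting on each diagonal entry in turn shows that for each fixed $\omega \in \H_1$ the function $F(\cdot, \omega)$ is a modular form of weight $k-\frac12$ in the variable $\tau$, and symmetrically for $F(\tau,\cdot)$. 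The generalized plus-space condition on $\mathcal{H}_{k-\frac12}^{(2)}$ forces $C\smat{n}{0}{0}{m}$ to vanish unless $n \equiv 0, 3 \pmod 4$ and $m \equiv 0, 3 \pmod 4$, so these restrictions actually lie in the Kohnen plus space, on which $T_1(p^2)$ is defined.

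Recall that $T_1(p^2)$ acts on a plus-space Fourier series $\sum_n a(n)\, e^{2\pi \sqrt{-1}\, n\tau}$ of weight $k-\frac12$ by sending $a(n)$ to $a(np^2) + \left(\frac{-n}{p}\right) p^{k-2}\, a(n) + p^{2k-3}\, a(n/p^2)$. Applying $T_1(p^2)$ in the $\tau$-variable to $F$ produces a Fourier series whose coefficient at $e^{2\pi \sqrt{-1}(n\tau + m\omega)}$ equals
\begin{equation*}
C\smat{np^2}{0}{0}{m} + \left(\frac{-n}{p}\right) p^{k-2}\, C\smat{n}{0}{0}{m} + p^{2k-3}\, C\smat{n/p^2}{0}{0}{m},
\end{equation*}
while applying $T_1(p^2)$ in the $\omega$-variable produces the analogous expression with the roles of $n$ and $m$ swapped. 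Corollary~\ref{cor:fourier_deg2} specialized at $r=0$ — itself a direct translation of Theorem~\ref{thm:deg2} — states precisely that these two coefficients agree for every $n$ and $m$; hence the two Fourier series coincide, yielding the identity of Corollary~\ref{cor:deg2}.

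The main technical point is to verify that $F(\cdot,\omega)$ truly lies in the Kohnen plus space — in particular its modularity in the $\tau$-variable alone — so that the Hecke operator $T_1(p^2)$ is well-defined on it. Both modularity and the plus-space congruence are consequences of the corresponding properties of $\mathcal{H}_{k-\frac12}^{(2)}$ on $\H_2$; once these are recorded, the proof reduces to the coefficient comparison outlined above.
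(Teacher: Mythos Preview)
Your proof is correct and follows essentially the same route as the paper's: both reduce the corollary to the $r=0$ case of Corollary~\ref{cor:fourier_deg2} (equivalently Theorem~\ref{thm:deg2}) and then read off the action of $T_1(p^2)$ on the Fourier coefficients in each variable. The paper packages the computation through the Fourier--Jacobi coefficients $e_{k,m}^{(1)}(\tau,0)$ and the identity $e_{k,m}^{(1)}|V_p^{(1)}=e_{k,m}^{(1)}|S_p^{(1)}$ rather than the double Fourier expansion, but this is only an organizational difference; note also that the block embedding you invoke should be of $\Gamma_0(4)$ into $\Gamma_0^{(2)}(4)$ (with the compatible theta multiplier), not of the full $\mbox{SL}(2,\Z)$.
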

The proof of Corollary \ref{cor:deg2} will be given in \S\ref{ss:cor:deg2}.

We now consider the case of degree $n = 2$.
As for the Fourier-Jacobi coefficients of the generalized Cohen-Eisenstein series $\mathcal{H}_{k-\frac12}^{(3)}$ of degree $3$,
we obtain the following theorem.
\begin{theorem}
\label{thm:deg3}
For any prime $p$, we obtain
\begin{eqnarray*}
 (e_{k,m}^{(2)}|V_{1,p}^{(2)})(\tau,z)
 &=&
 p \left( p^{2k-5} + 1 \right) e_{k,m}^{(2)}(\tau,pz)
 + (e_{k,m}^{(2)}|S_p^{(2)})(\tau,z),
\\
 (e_{k,m}^{(2)}|V_{2,p}^{(2)})(\tau,z)
 &=&
 (p^{2k-4} - p^{2k-6})\, e_{k,m}^{(2)}(\tau,pz)
 + (p^{2k-5}+1)(e_{k,m}^{(2)}|S_p^{(2)})(\tau,z).
\end{eqnarray*}
\end{theorem}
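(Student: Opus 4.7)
The plan is to reduce Theorem~\ref{thm:deg3} to an identity among Jacobi--Eisenstein series of integral weight $k$ and degree $2$ with $2\times 2$ matrix index, and then carry out an explicit coset computation on that side. First, I would transport $e_{k,m}^{(2)}$ across the identification $\iota$ between the generalized plus-space $J_{k-\frac12,m}^{(2)*}$ and the space of integral-weight Jacobi forms of degree $2$ with $2\times 2$ matrix index having $1$ in the lower-right corner, which, as outlined in the introduction, is developed earlier in the paper together with its compatibility with the maps $V_{1,p}^{(2)}$, $V_{2,p}^{(2)}$ and with $S_p^{(2)}$. Under $\iota$, the two formulas of the theorem become equivalent identities among integral-weight Jacobi forms.

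Next, since $\mathcal{H}_{k-\frac12}^{(3)}$ is a Jacobi--Eisenstein series of index $1$ of degree $3$, the image $\iota(e_{k,m}^{(2)})$ is, by an analogue of B\"ocherer's Satz~7 in \cite{Bo}, an explicit Jacobi--Eisenstein series $E_{k,\mathcal{S}}^{(2)}$ of integral weight $k$ of degree $2$ whose $2\times 2$ index $\mathcal{S}$ has $1$ in the lower-right corner. Both sides of the theorem thereby become linear combinations of Jacobi--Eisenstein series $E_{k,\mathcal{S}'}^{(2)}$ whose indices $\mathcal{S}'$ are related to $\mathcal{S}$ by elementary-divisor substitutions $\mathcal{S}\mapsto {}^tU\mathcal{S}U$. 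I would then choose explicit representatives for the double cosets defining $V_{1,p}^{(2)}$ and $V_{2,p}^{(2)}$ in the degree-$2$ Jacobi group, unfold the slash-action on the Eisenstein sum in the standard way, and regroup the contributions by the resulting index. Matching these groups against the expansion of the right-hand side --- where $e_{k,m}^{(2)}|S_p^{(2)}$ contributes terms with indices shifted by $p^2$ together with the substitutions $z\mapsto pz$ and $z\mapsto p^2z$, while $e_{k,m}^{(2)}(\tau,pz)$ contributes the pure dilation --- will pin down the numerical coefficients $p(p^{2k-5}+1)$ for $V_{1,p}^{(2)}$ and the pair $p^{2k-4}-p^{2k-6}$, $p^{2k-5}+1$ for $V_{2,p}^{(2)}$. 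The extra dilation term arises precisely from those coset representatives whose action is not a genuine index shift but a pure rescaling of the Jacobi variable.

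The main obstacle will be this coset computation. The degree-$2$ Jacobi group sits inside a symplectic-type group of rank $4$, so the double-coset structure is substantially more intricate than in the degree-$1$ case of Theorem~\ref{thm:deg2}, and the two index-shift maps correspond to distinct elementary-divisor patterns (morally $(p,1)$ and $(p,p)$) that must be separated and counted with care. A secondary obstacle is the bookkeeping needed to recognize, after reapplying $\iota^{-1}$, that the various index-shifted Jacobi--Eisenstein series assemble cleanly into the simple combination of $e_{k,m}^{(2)}(\tau,pz)$ and $e_{k,m}^{(2)}|S_p^{(2)}$ appearing in the statement; this clean packaging is ultimately an artefact of the half-integral weight plus-space side and is exactly what makes the theorem a generalized Maass relation rather than a generic Hecke-type identity.
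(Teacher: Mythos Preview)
Your overall strategy matches the paper's: transport via $\iota_{\mathcal{M}}$ to integral-weight Jacobi forms of $2\times 2$ matrix index (Propositions~\ref{prop:iota_U} and~\ref{prop:iota_hecke}), compute the action of $V_{\alpha,2-\alpha}(p^2)$ on Jacobi--Eisenstein series by an explicit coset decomposition (carried out in the paper via the functions $\tilde{K}_{i,j}^{\beta}$ of Proposition~\ref{prop:tilde_kij} and the generalized Gauss sums $G_{\mathcal{M}}^{j-i,l}$, specialized to $n=2$ in Lemmas~\ref{lemma:E_lambda} and~\ref{lemma:E_omega}), and then reassemble on the half-integral side.

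There is, however, a real gap in your description. The object $\iota_{\mathcal{M}}^{-1}(e_{k,m}^{(2)}) = e_{k,\mathcal{M}}^{(2)}$ is \emph{not} a single Jacobi--Eisenstein series $E_{k,\mathcal{S}}^{(2)}$. B\"ocherer's Satz~7, in the form of Proposition~\ref{prop:fourier_jacobi}, gives instead a divisor sum
\[
 e_{k,\mathcal{M}}^{(2)}(\tau,z) \;=\; \sum_{d\mid f} g_k\!\left(\frac{m}{d^2}\right) E_{k,\mathcal{M}[{}^t W_d^{-1}]}^{(2)}(\tau, z W_d),
\]
where $-m=D_0 f^2$ and $g_k(m)=\sum_{d\mid f}\mu(d)\,h_{k-\frac12}(m/d^2)$. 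After applying $V_{\alpha,2-\alpha}(p^2)$ to each summand and pulling back through $\iota$, one has a double sum over $d\mid f$ and over the coset pieces $\tilde{K}_{i,j}^{\beta}$. Reorganizing this into $e_{k,mp^2}^{(2)}(\tau,z)$, $e_{k,m}^{(2)}(\tau,pz)$, and $e_{k,m/p^2}^{(2)}(\tau,p^2 z)$ is not automatic: it depends on the multiplicative identity
\[
 g_k(p^2 m) \;=\; \Bigl(p^{2k-3} - \Bigl(\tfrac{-m}{p}\Bigr)p^{k-2}\Bigr)\, g_k(m)
\]
of Lemma~\ref{lemma:gk}, applied several times to shift between the divisor sums for $f$, $pf$, and $f/p$. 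This is the mechanism by which the three indices $mp^2$, $m$, $m/p^2$ occurring in $S_p^{(2)}$ materialize, and it is not visible if one models $e_{k,\mathcal{M}}^{(2)}$ as a single Eisenstein series. What you call the ``secondary obstacle'' is thus an essential step on equal footing with the coset computation, and your plan should account for both Proposition~\ref{prop:fourier_jacobi} and Lemma~\ref{lemma:gk} explicitly.
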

These identities can be also regarded as relations among Fourier coefficients of
$\mathcal{H}_{k-\frac12}^{(3)}$.
The expression of the Fourier coefficients of $e_{k,m}^{(2)}|V_{i,p}^{(2)}$ $(i=1,2)$
will be given in the appendix \S\ref{ss:hecke_V_12}.

Because $e_{k,m}^{(2)}$ corresponds to a Fourier-Jacobi coefficient
of Siegel-Eisenstein series of weight $k$ of degree four,
we can regard these identities as relations among Fourier coefficients
of Siegel-Eisenstein series of degree four.

Now, let $T_{2,1}(p^2)$ and $T_{2,2}(p^2)$ be Hecke operators which generate the local Hecke ring at $p$
acting on the generalized plus-space of degree two.
These $T_{2,1}(p^2)$ and $T_{2,2}(p^2)$ are denoted as
$X_1(p)$ and $p^{-k+\frac72} X_2(p)$ in \cite[p.513]{HI}, respectively.
As a corollary of Theorem \ref{thm:deg3} we have the following.

\begin{cor}\label{cor:deg3}
For any prime $p$ we obtain
\begin{eqnarray*}
 \mathcal{H}_{k-\frac12}^{(3)}\!\!\left.\left(\begin{pmatrix} \tau & 0 \\ 0 & \omega \end{pmatrix}\right)\right|_{\tau}T_{2,1}(p^2)
 &=&
 \mathcal{H}_{k-\frac12}^{(3)}\!\!\left.\left(\begin{pmatrix} \tau & 0 \\ 0 & \omega \end{pmatrix}\right)\right|_{\omega}
 (p(p^{2k-5} + 1) +  T_1(p^2)),
\end{eqnarray*}
and
\begin{eqnarray*}
 && \mathcal{H}_{k-\frac12}^{(3)}\!\!\left.\left(\begin{pmatrix} \tau & 0 \\ 0 & \omega \end{pmatrix}\right)\right|_{\tau}T_{2,2}(p^2) \\
 &=&
 \mathcal{H}_{k-\frac12}^{(3)}\!\!\left.\left(\begin{pmatrix} \tau & 0 \\ 0 & \omega \end{pmatrix}\right)\right|_{\omega}
 ((p^{2k-4}-p^{2k-6}) + p(p^{2k-5} + 1) T_1(p^2)).
\end{eqnarray*}
Here, in LHS of the above identities we regard
$\mathcal{H}_{k-\frac12}^{(3)}\!\!\left(\begin{pmatrix} \tau & 0 \\ 0 & \omega \end{pmatrix}\right)$
as a function of $\tau \in \H_2$,
while we regard it as a function of $\omega \in \H_1$ in RHS.
The proof of this corollary will be given in \S\ref{ss:cor:deg3}.
\end{cor}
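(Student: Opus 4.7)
The plan is to reduce the corollary to Theorem~\ref{thm:deg3} by restricting the Fourier--Jacobi expansion of $\mathcal{H}_{k-\frac12}^{(3)}$ along $\omega$ to the block-diagonal locus $z=0$,
\begin{equation*}
 \mathcal{H}_{k-\frac12}^{(3)}\!\!\left(\begin{pmatrix} \tau & 0 \\ 0 & \omega \end{pmatrix}\right) = \sum_{m \equiv 0,3 \bmod 4} e_{k,m}^{(2)}(\tau,0)\, e^{2\pi\sqrt{-1}m\omega},
\end{equation*}
and then matching the two sides of each asserted identity term-by-term in the resulting Fourier expansion in $\omega$. The underlying idea is the same as for Corollary~\ref{cor:deg2}: operators acting on the restriction of $\mathcal{H}_{k-\frac12}^{(3)}$ to $z=0$ in the variable $\tau$ should be transported to the Fourier--Jacobi coefficients $e_{k,m}^{(2)}(\tau,0)$ via the index-shift maps $V_{i,p}^{(2)}$, while $T_1(p^2)$ acting in $\omega$ should be transported via the $S_p^{(2)}$-action on those coefficients.

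Concretely, the first step is the $\omega$-side identification: combining the Kohnen plus-space Hecke formula with the definition of $S_p^{(2)}$, one checks directly that
\begin{eqnarray*}
 \left.\mathcal{H}_{k-\frac12}^{(3)}\!\!\left(\begin{pmatrix} \tau & 0 \\ 0 & \omega \end{pmatrix}\right)\right|_{\omega} T_1(p^2)
 &=& \sum_{m} (e_{k,m}^{(2)}|S_p^{(2)})(\tau,0)\, e^{2\pi\sqrt{-1}m\omega},
\end{eqnarray*}
since the three summands defining $S_p^{(2)}$ collapse at $z=0$ to precisely the three terms of the Kohnen Hecke formula applied to the coefficient $e_{k,m}^{(2)}(\tau,0)$. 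The second and more delicate step is the $\tau$-side identification: from the double-coset descriptions in \S\ref{ss:hecke_V}, I would establish the compatibility
\begin{eqnarray*}
 \left.\mathcal{H}_{k-\frac12}^{(3)}\!\!\left(\begin{pmatrix} \tau & 0 \\ 0 & \omega \end{pmatrix}\right)\right|_{\tau} T_{2,i}(p^2)
 &=& \sum_{m} (e_{k,m}^{(2)}|V_{i,p}^{(2)})(\tau,0)\, e^{2\pi\sqrt{-1}m\omega} \qquad (i=1,2),
\end{eqnarray*}
asserting that the action of $T_{2,i}(p^2)$ on the generalized plus-space of degree three induces exactly the index-shift map $V_{i,p}^{(2)}$ on each Fourier--Jacobi coefficient evaluated at $z=0$.

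With these two compatibilities in hand, the proof concludes by substituting the identities of Theorem~\ref{thm:deg3} into the right-hand side of the $\tau$-side display, using $e_{k,m}^{(2)}(\tau,pz)|_{z=0} = e_{k,m}^{(2)}(\tau,0)$ to collapse the terms involving $pz$, and re-summing over $m$ to re-express each $(e_{k,m}^{(2)}|S_p^{(2)})$-contribution as the $T_1(p^2)$-action in $\omega$. The first identity of Theorem~\ref{thm:deg3} yields the first asserted formula, and similarly the second identity yields the second. The main obstacle is the $\tau$-side compatibility: in the one-variable case underlying Corollary~\ref{cor:deg2} the transport of $T_1(p^2)$ through the Fourier--Jacobi expansion is essentially automatic, whereas here one must carefully match the double cosets defining the generators $T_{2,1}(p^2)$ and $T_{2,2}(p^2)$ of the local Hecke algebra at $p$ on the degree-two plus-space with those defining $V_{1,p}^{(2)}$ and $V_{2,p}^{(2)}$ on Jacobi forms of $2\times 2$ matrix index.
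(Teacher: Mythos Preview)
Your proposal is correct and follows essentially the same route as the paper's proof in \S\ref{ss:cor:deg3}: restrict the Fourier--Jacobi expansion to $z=0$, identify $T_{2,i}(p^2)$ acting in $\tau$ with $(e_{k,m}^{(2)}|V_{i,p}^{(2)})(\tau,0)$ and $T_1(p^2)$ acting in $\omega$ with $(e_{k,m}^{(2)}|S_p^{(2)})(\tau,0)$, then invoke Theorem~\ref{thm:deg3}. The only difference is emphasis: the $\tau$-side compatibility you flag as the ``main obstacle'' is in fact treated by the paper as immediate from the definitions of $V_{i,p}^{(2)}$ and $T_{2,i}(p^2)$ (both arise from the same double cosets, cf.\ \S\ref{ss:hecke_V} and the reference to \cite{HI}), so no delicate matching is needed.
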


\vspace{0.3cm}
\noindent
\textit{Remark 1.1}

We remark that Tanigawa~\cite[\S5]{Tani} has obtained the same identity in Corollary \ref{cor:fourier_deg2}
for \textit{Siegel-Eisenstein series of half-integral weight} of degree two with arbitrary level $N$ which satisfies $4|N$.
He showed the identity by using the formula of local densities under the assumption $p {\not|} N$.
In our case we treat the \textit{generalized Cohen-Eisenstein series} of degree two,
which is essentially level $1$.
Hence, our result contains the relation also for $p=2$.

\vspace{0.3cm}
\noindent
\textit{Remark 1.2}

Corollary \ref{cor:deg2} follows from also the pullback formula
shown by Arakawa \cite[Theorem 0.1]{Ar2}, which is a certain formula
with respect to Jacobi-Eisenstein series of index $1$.
In this paper we show Corollary \ref{cor:deg2} as the consequence of the
generalized Maass relation of generalized Cohen-Eisenstein
series of degree $3$.
\vspace{0.5cm}

This paper is organized as follows:
in Sect.~2, the necessary notation and definitions are introduced.
In Sect.~3, the relation among Fourier-Jacobi coefficients of Siegel-Eisenstein series
and Jacobi-Eisenstein series
is derived.
In Sect.~4, a certain map from a subspace of Jacobi forms of matrix index to a space of Jacobi forms of integer index
is defined.
Moreover, the compatibility of this map with certain index-shift maps is studied.
In Sect.~5, we calculate the action of certain maps on the Jacobi-Eisenstein series.
We express this function as a summation of certain exponential functions with generalized Gauss sums.
In Sect.~6, Theorem~\ref{thm:deg2} and Corollary~\ref{cor:deg2} will be proved,
while we will give the proof of Theorem~\ref{thm:deg3} and Corollary~\ref{cor:deg3} in Sect.~7.
We shall give some auxiliary calculations as an appendix in Sect.~8.

\section{Notation and definitions}

$\R^+$ : the set of all positive real numbers
 
$R^{(n,m)}$ : the set of $n\times m$ matrices with entries in a commutative ring $R$

$\mbox{Sym}_n^*$ : the set of all half-integral symmetric matrices of size $n$

$\mbox{Sym}_n^+$ : all positive definite matrices in $\mbox{Sym}_n^*$

${^t B}$ : the transpose of a matrix $B$

$A[B]$ $:=$ ${^t B} A B $ for two matrices $A \in R^{(n,n)}$ and $B \in R^{(n,m)}$

$1_n$ (resp. $0_n$) : identity matrix (resp. zero matrix) of size $n$

$\mbox{tr}(S)$ : the trace of a square matrix $S$

$e(S) := e^{2 \pi \sqrt{-1}\, \mbox{tr}(S)}$ for a square matrix $S$

$\mbox{rank}_p(x)$ : the rank of matrix $x \in \Z^{(n,m)}$ over the finite field $\Z/p\Z$

$\mbox{diag}(a_1,...,a_n)$ : the diagonal matrix $\left(\begin{smallmatrix} a_1 & & \\ &\ddots & \\ & & a_n\end{smallmatrix}\right)$
for square matrices $a_1$, ..., $a_n$

$\left(\frac{*}{p}\right)$ : the Legendre symbol for odd prime $p$

$\left(\frac{*}{2}\right)$ $:=$ $0,1,-1$ accordingly as $a$ is even,
                             $a \equiv \pm 1$ mod $8$ or $a \equiv \pm 3$ mod $8$

$M_{k-\frac12}(\Gamma_0^{(n)}(4))$ : the space of Siegel modular forms of weight $k-\frac12$ of degree $n$

$M_{k-\frac12}^{+}(\Gamma_0^{(n)}(4))$ : the plus-space of $M_{k-\frac12}(\Gamma_0^{(n)}(4))$ (cf. \cite{Ib}.)

$\mathfrak{H}_n$ : the Siegel upper half space of degree $n$

$\delta(\mathcal{S}) := 1 $ or $0$ accordingly as the statement $\mathcal{S}$ is true or false.

\subsection{Jacobi group}
For a positive integer $n$ we define the following groups:
\begin{eqnarray*}
 \mbox{GSp}_n^+(\R)
 &:=&
 \left\{
  g \in \R^{(2n,2n)} \, 
  | \, 
  g \left(\begin{smallmatrix}
     0_n & -1_n \\ 1_n & 0_n 
   \end{smallmatrix}\right)
  {^t g} 
  = n(g) 
  \left(\begin{smallmatrix}
     0_n & -1_n \\ 1_n & 0_n 
   \end{smallmatrix}\right)
  \mbox{ for some } n(g) \in \R^+
 \right\} ,
\\
 \mbox{Sp}_n(\R)
 &:=&
 \left\{
  g \in \mbox{GSp}_n^+(\R) \, | \, 
  n(g) = 1
 \right\} ,
\\
 \Gamma_n
 &:=&
 \mbox{Sp}_n(\R) \cap \Z^{(2n,2n)},
\\
 \Gamma_{\infty}^{(n)}
 &:=&
 \left.
 \left\{
  \begin{pmatrix} A & B \\ C & D \end{pmatrix} \in \Gamma_n
 \, \right| \,
  C = 0_n 
 \right\} ,\\
 \Gamma_0^{(n)}(4)
 &:=&
 \left\{
 \left.
  \begin{pmatrix}
   A & B \\ C & D
  \end{pmatrix}
  \in \Gamma_n
  \, \right| \, 
  C \equiv 0 \mod 4
 \right\} .
\end{eqnarray*}
For a matrix $g \in \mbox{GSp}_n^+(\R)$, the number $n(g)$ in the above definition of $\mbox{GSp}_n^+(\R)$ is called the \textit{similitude} of the matrix $g$.

For positive integers $n$, $r$ we define the subgroup $G_{n,r}^J \subset \mbox{GSp}_{n+r}^+(\R)$ by
\begin{eqnarray*}
 G_{n,r}^J 
 &:=&
 \left\{
  \begin{pmatrix}
   A &   & B &  \\
     & U &   &  \\
   C &   & D &  \\
     &   &   & V
\end{pmatrix}\begin{pmatrix}
   1_n &   &   & \mu  \\
   ^t \lambda  & 1_r & ^t \mu  & {^t \lambda} \mu + \kappa  \\
     & & 1_n & - \lambda  \\
     &   &   & 1_r
\end{pmatrix} 
\in \mbox{GSp}_{n+r}^+(\R)
\right\},
\end{eqnarray*}
where in the above definition
the matrices runs over
$\begin{pmatrix}
  A&B\\C&D
 \end{pmatrix}
 \in \mbox{GSp}_n^+(\R)$,
$\begin{pmatrix}
  U&0\\0&V
 \end{pmatrix}
 \in \mbox{GSp}_r^+(\R)$,
$\lambda, \mu \in \R^{(n,r)}$
and
$\kappa = {^t \kappa}\in \R^{(r,r)}$.

We will abbreviate such an element
 $\left(\begin{smallmatrix}
    A &   & B &  \\
      & U &   &  \\
    C &   & D &  \\
      &   &   & V
 \end{smallmatrix}\right)
 \left(\begin{smallmatrix}
   1_n &   &   & \mu  \\
   ^t \lambda  & 1_r & ^t \mu  & {^t \lambda}\mu + \kappa  \\
     & & 1_n & - \lambda  \\
     &   &   & 1_r
 \end{smallmatrix}\right)$
as
\[
\left(\begin{pmatrix}A&B\\C&D\end{pmatrix}
  \times
  \begin{pmatrix}U&0\\0&V\end{pmatrix},
  [
   (\lambda,\mu),\kappa
  ]
  \right) .
\]
We remark that two matrices $\left(\begin{smallmatrix}A&B\\C&D\end{smallmatrix}\right)$ and
$\left(\begin{smallmatrix}U&0\\0&V\end{smallmatrix}\right)$ have the same similitude in the above.
We will often write
\begin{eqnarray*}
\left(\left(\begin{matrix}A&B\\C&D\end{matrix}\right),
  [
   (\lambda,\mu),\kappa
  ]
  \right)
\end{eqnarray*}
instead of writing
 $\left(\left(\begin{smallmatrix}A&B\\C&D\end{smallmatrix}\right)
  \times
  1_{2r},
  [
   (\lambda,\mu),\kappa
  ]
  \right)$
for simplicity.
We remark that the element
 $\left(\left(\begin{smallmatrix}A&B\\C&D\end{smallmatrix}\right),
  [
   (\lambda,\mu),\kappa
  ]
  \right)$
belongs to $\mbox{Sp}_{n+r}(\R) $.
Similarly an element
\begin{eqnarray*}
 \left(\begin{smallmatrix}
   1_n &   &   & \mu  \\
   ^t \lambda  & 1_r & ^t \mu  & {^t \lambda}\mu + \kappa  \\
     & & 1_n & - \lambda  \\
     &   &   & 1_r
 \end{smallmatrix}\right)
 \left(\begin{smallmatrix}
    A &   & B &  \\
      & U &   &  \\
    C &   & D &  \\
      &   &   & V
 \end{smallmatrix}\right)
\end{eqnarray*}
will be abbreviated as
\begin{eqnarray*}
 \left(
  [
   (\lambda,\mu),\kappa
  ],
  \left(\begin{matrix}A&B\\C&D\end{matrix}\right)
  \times
  \left(\begin{matrix}U&0\\0&V\end{matrix}\right)
 \right),
\end{eqnarray*}
and we will abbreviate it as
 $
 \left(
  [
   (\lambda,\mu),\kappa
  ],
  \left(\begin{smallmatrix}A&B\\C&D\end{smallmatrix}\right)
 \right)
 $
for the case $U = V = 1_r$ .

We set a subgroup $\Gamma_{n,r}^J$ of $G_{n,r}^J$ by
\begin{eqnarray*}
 \Gamma_{n,r}^J 
  &:=&
 \left\{
  \left(M,
  [
   (\lambda,\mu),\kappa
  ]
  \right) 
   \in G_{n,r}^J
  \, \left| \,
  M \in \Gamma_n ,
  \lambda, \mu \in \Z^{(n,r)}, \kappa \in \Z^{(r,r)}
 \right\} \right. .
\end{eqnarray*}

\subsection{The universal covering groups $\widetilde{\mbox{GSp}_n^+(\R)}$
and $\widetilde{G_{n,1}^J}$}\label{ss:double_covering_groups}

We denote by $\widetilde{\mbox{GSp}_n^+(\R)}$ the universal covering group of $\mbox{GSp}_n^+(\R)$
which consists of pairs $(M,\varphi(\tau))$,
where $M$ is a matrix $M = \left(\begin{smallmatrix} A&B\\C&D \end{smallmatrix}\right)\in \mbox{GSp}_n^+(\R)$,
and where $\varphi$ is any holomorphic function on $\mathfrak{H}_n$
such that $|\varphi(\tau)|^2 = \det(M)^{-\frac12} |\det(C\tau + D)|$.
The group operation on $\widetilde{\mbox{GSp}_n^+(\R)}$ is given by
$(M,\varphi(\tau))(M',\varphi'(\tau)) := (M M', \varphi(M'\tau)\varphi'(\tau))$.

We embed $\Gamma_0^{(n)}(4)$ into the group $\widetilde{\mbox{GSp}_n^+(\R)}$
via $M \rightarrow (M,\theta^{(n)}(M\tau)\, \theta^{(n)}(\tau)^{-1})$, where
 $ \theta^{(n)}(\tau) 
  :=
   \displaystyle{\sum_{p \in \Z^{(n,1)}} e(\tau[p])}$
is the theta constant.
We denote by $\Gamma_0^{(n)}(4)^*$ the image of $\Gamma_0^{(n)}(4)$
in $\widetilde{\mbox{GSp}_n^+(\R)}$ by this embedding.

We define the Heisenberg group
\begin{eqnarray*}
 H_{n,1}(\R)
 &:=&
 \left\{(1_{2n},[(\lambda,\mu),\kappa]) \in \mbox{Sp}_{n+1}(\R)
 \, | \,
 \lambda,\mu \in \R^{(n,1)}, \kappa \in \R \right\} .
\end{eqnarray*}
If there is no confusion, we will write $[(\lambda,\mu),\kappa]$
for the element $(1_{2n},[(\lambda,\mu),\kappa])$ for simplicity.

We define the group
\begin{eqnarray*}
 \widetilde{G_{n,1}^J}
 &:=&
 \widetilde{\mbox{GSp}_n^+(\R)} \ltimes H_{n,1}(\R) \\
 &=&
 \left. \left\{(\tilde{M},[(\lambda,\mu),\kappa]) \, \right| \,
   \tilde{M} \in \widetilde{\mbox{GSp}_n^+(\R)},
   [(\lambda,\mu),\kappa] \in H_{n,1}(\R) \right\} .
\end{eqnarray*}
Here the group operation on $\widetilde{G_{n,1}^J}$ is given by
\begin{eqnarray*}
 (\tilde{M_1},[(\lambda_1,\mu_1),\kappa_1])\cdot (\tilde{M_2},[(\lambda_2,\mu_2),\kappa_2])
 &:=&
 (\tilde{M_1}\tilde{M_2},[(\lambda',\mu'),\kappa'])
\end{eqnarray*}
for $(\tilde{M_i},[(\lambda_i,\mu_i),\kappa_i]) \in \widetilde{G_{n,1}^J}$ $(i=1,2)$,
and where $[(\lambda',\mu'),\kappa'] \in H_{n,1}(\R)$ is the matrix determined through the identity
\begin{eqnarray*}
 &&
 (M_1\times\left(\begin{smallmatrix}n(M_1)&0\\0&1\end{smallmatrix}\right),[(\lambda_1,\mu_1),\kappa_1])
 (M_2\times\left(\begin{smallmatrix}n(M_2)&0\\0&1\end{smallmatrix}\right),[(\lambda_2,\mu_2),\kappa_2])
\\
 &=&
 (M_1M_2 \times \left(\begin{smallmatrix}n(M_1)n(M_2)&0\\0&1\end{smallmatrix}\right),[(\lambda',\mu'),\kappa'])
\end{eqnarray*}
in $G_{n,1}^J$.
Here $n(M_i)$ is the similitude of $M_i$.

\subsection{Action of the Jacobi group}
The group $G_{n,r}^J$ acts on $\mathfrak{H}_n \times \C^{(n,r)}$ by
\begin{eqnarray*}
 \gamma \cdot (\tau,z) 
 &:=&
 \left(
  \begin{pmatrix}A&B\\C&D\end{pmatrix} \cdot \tau
 \, ,\,
  ^t(C\tau+D)^{-1}(z + \tau\lambda + \mu)^t U
 \right)
\end{eqnarray*}
for any $\gamma = \left(\left(\begin{smallmatrix}A&B\\C&D\end{smallmatrix}\right)
  \times
  \left(\begin{smallmatrix}U&0\\0&V\end{smallmatrix}\right),
  [
   (\lambda,\mu),\kappa
  ]
  \right) \in G_{n,r}^J$ 
and for any $(\tau,z) \in \mathfrak{H}_n \times \C^{(n,r)}$. 
Here $\begin{pmatrix}A&B\\C&D\end{pmatrix} \cdot \tau := (A\tau+B)(C\tau+D)^{-1}$ 
is the usual transformation.

The group $\widetilde{G_{n,1}^J}$ acts on $\mathfrak{H}_n\times \C^{(n,1)}$
through the action of $G_{n,1}^J$ by 
\begin{eqnarray*}
 \tilde{\gamma}\cdot(\tau,z) 
  &:=& 
  (M\times\left(\begin{smallmatrix}n(M)&0\\0&1\end{smallmatrix}\right),[(\lambda,\mu),\kappa])\cdot(\tau,z)
\end{eqnarray*}
for $\tilde{\gamma} = ((M,\varphi),[(\lambda,\mu),\kappa]) \in \widetilde{G_{n,1}^J}$
and for $(\tau,z) \in \mathfrak{H}_n\times \C^{(n,1)}$.
Here $n(M)$ is the similitude of $M \in \mbox{GSp}_n^+(\R)$.

\subsection{Factors of automorphy}\label{ss:factors_automorphy}
Let $k$ be an integer and let $\mathcal{M} \in \mbox{Sym}_r^+$.
For
 $\gamma = \left(\left(\begin{smallmatrix}A&B\\C&D\end{smallmatrix}\right)
  \times
  \left(\begin{smallmatrix}U&0\\0&V\end{smallmatrix}\right),
  [
   (\lambda,\mu),\kappa
  ]
  \right) \in G_{n,r}^J$
we define a factor of automorphy
\begin{eqnarray*}
 J_{k,\mathcal{M}}\left( 
   \gamma, (\tau,z) \right)
 &:=& 
  \det(V)^k \det(C\tau+D)^k\, e(V^{-1}\mathcal{M}U (((C\tau+D)^{-1}C)[z + \tau \lambda + \mu]) ) \\
 && \times
  e(- V^{-1} \mathcal{M} U ({^t \lambda} \tau \lambda
     + {^t z} \lambda + {^t \lambda} z + {^t \mu} \lambda + {^t \lambda}\mu + \kappa)).
\end{eqnarray*}
We define a slash operator $|_{k,\mathcal{M}}$ by
\begin{eqnarray*}
 (\phi|_{k,\mathcal{M}}\gamma)(\tau,z) 
 &:=& 
 J_{k,\mathcal{M}}(\gamma,(\tau,z))^{-1} \phi(\gamma\cdot(\tau,z))
\end{eqnarray*}
for any function $\phi$ on $\mathfrak{H}_n \times \C^{(n,r)}$ 
and for any $\gamma \in G_{n,r}^J$.
We remark that
\begin{eqnarray*}
 J_{k,\mathcal{M}}(\gamma_1 \gamma_2, (\tau,z))
 &=&
 J_{k,\mathcal{M}}(\gamma_1, \gamma_2 \cdot (\tau,z))
 J_{k,V_1^{-1}\mathcal{M}U_1}(\gamma_2, (\tau,z)),
\\
 \phi|_{k,\mathcal{M}}\gamma_1 \gamma_2 
 &=&
 (\phi|_{k,\mathcal{M}}\gamma_1) |_{k,V_1^{-1}\mathcal{M}U_1}\gamma_2 .
\end{eqnarray*}
for any $\gamma_i = \left(M_i
  \times
  \left(\begin{smallmatrix}U_i&0\\0&V_i\end{smallmatrix}\right),
  [
   (\lambda_i,\mu_i),\kappa_i
  ]
  \right) \in G_{n,r}^J$ $(i =1,2)$.

Let $k$ and $m$ be integers.
We define a slash operator $|_{k-\frac12,m}$ for any function $\phi$ on $\mathfrak{H}_n\times \C^{(n,1)}$ by
\begin{eqnarray*}
 \phi|_{k-\frac12,m}\tilde{\gamma}
 &:=&
 J_{k-\frac12,m}(\tilde{\gamma},(\tau,z))^{-1}
 \phi(\tilde{\gamma}\cdot(\tau,z))
\end{eqnarray*}
for any $\tilde{\gamma} = ((M,\varphi),[(\lambda,\mu),\kappa]) \in \widetilde{G_{n,1}^J}$.
Here we define a factor of automorphy
\begin{eqnarray*} 
 J_{k-\frac12,m}(\tilde{\gamma},(\tau,z))
 &:=&
 \varphi(\tau)^{2k-1} e(n(M) m (((C\tau+D)^{-1}C)[z + \tau \lambda + \mu]) ) \\
 && \times
  e(-  n(M) m ({^t \lambda} \tau \lambda + {^t z} \lambda + {^t \lambda} z + {^t \mu} \lambda + {^t \lambda}\mu + \kappa)),
\end{eqnarray*}
where $n(M)$ is the similitude of $M$.
We remark that
\begin{eqnarray*}
 J_{k-\frac12,m}(\tilde{\gamma_1}\tilde{\gamma_2},(\tau,z))
 &=&
 J_{k-\frac12,m}(\tilde{\gamma_1},\tilde{\gamma_2}\cdot(\tau,z))
 J_{k-\frac12,n(M_1) m}(\tilde{\gamma_2},(\tau,z))
\\
 \phi|_{k-\frac12,m}\tilde{\gamma_1}\tilde{\gamma_2}
 &=&
 (\phi|_{k-\frac12,m}\tilde{\gamma_1})|_{k-\frac12,n(M_1)m}\tilde{\gamma_2}
\end{eqnarray*}
for any $\tilde{\gamma_i} = ((M_i,\varphi_i),[(\lambda_i,\mu_i),\kappa_i]) \in \widetilde{G_{n,1}^J}$
$(i=1,2)$.

\subsection{Jacobi forms of matrix index}\label{ss:jacobi_forms_of_matrix_index}
We quote the definition of Jacobi forms of matrix index from \cite{Zi}.
For an integer $k$ and for an matrix $\mathcal{M} \in \mbox{Sym}_r^+$ 
a $\C$-valued holomorphic function $\phi$ on $\mathfrak{H}_n \times \C^{(n,r)}$ is called
\textit{a Jacobi form of weight $k$ of
index $\mathcal{M}$ of degree $n$}, if $\phi$ satisfies the following two conditions:
\begin{enumerate}
\item
the transformation formula
$\phi|_{k,\mathcal{M}} \gamma = \phi$ for any $\gamma \in \Gamma_{n,r}^J$,
\item
$\phi$ has the Fourier expansion:
$ \phi(\tau,z)
 = \!\!\!\!\!
 \displaystyle{
   \sum_{\begin{smallmatrix}N \in Sym_n^*,R \in Z^{(n,r)} \\ 4N - R \mathcal{M}^{-1} {^t R} \geq 0 
          \end{smallmatrix}} \!\!\!\!\! c(N,R) e(N\tau) e({^t R} z)
              }$.
\end{enumerate}
We remark that the second condition follows from the Koecher principle (cf.~\cite[Lemma~1.6]{Zi})
if $n > 1$.

We denote by $J_{k,\mathcal{M}}^{(n)}$ the $\C$-vector space of Jacobi forms of weight $k$ of index $\mathcal{M}$
of degree $n$.

\subsection{Jacobi forms of half-integral weight}\label{ss:def_jacobi_half_weight}

We set the subgroup $\Gamma_{n,1}^{J*}$ of $\widetilde{G_{n,1}^J}$ by
\begin{eqnarray*}
 \Gamma_{n,1}^{J*}
 &:=&
 \left\{
  (M^*,[(\lambda,\mu),\kappa]) \in \widetilde{G_{n,1}^J}
  \, | \,
  M^* \in \Gamma_0^{(n)}(4)^*, \,
  \lambda,\mu \in \Z^{(n,1)}, \kappa \in \Z
 \right\} \\
 &\cong&
  \Gamma_0^{(n)}(4)^* \ltimes H_{n,1}(\Z),
\end{eqnarray*}
where we put $H_{n,1}(\Z) := H_{n,1}(\R) \cap \Z^{(2n+2,2n+2)}$.
Here the group $\Gamma_0^{(n)}(4)^*$ was defined in \S\ref{ss:double_covering_groups}.

For an integer $k$ and for an integer $m$,
a holomorphic function $\phi$ on $\mathfrak{H}_n \times \C^{(n,1)}$
is \textit{called a Jacobi form of weight $k-\frac12$ of index $m$},
if $\phi$ satisfies the following two conditions:
\begin{enumerate}
\item
the transformation formula
$\phi|_{k-\frac12,m} \gamma^* = \phi$ for any $\gamma^* \in \Gamma_{n,1}^{J*}$,
\item
$\phi^2|_{2k-1,2m}\gamma$ has the Fourier expansion for any $\gamma \in \Gamma_{n,1}^J$:
\begin{eqnarray*}
 \left(\phi^2|_{2k-1,2m}\gamma\right) (\tau,z)
 &=&
 \sum_{\begin{smallmatrix}
        N \in Sym_n^*,R \in \Z^{(n,1)} \\
        4Nm - R {^t R} \geq 0
       \end{smallmatrix}}
      C(N,R)\, e\!\left(\frac{1}{h}N\tau\right) e\!\left(\frac{1}{h}{^t R} z\right)
\end{eqnarray*}
with a certain integer $h > 0$,
and where the slash operator $|_{2k-1,2m}$ was defined in $\S\ref{ss:jacobi_forms_of_matrix_index}$.
\end{enumerate}

We denote by $J_{k-\frac12,m}^{(n)}$ the $\C$-vector space of Jacobi forms
of weight $k-\frac12$ of index $m$.

\subsection{Index-shift maps}\label{ss:hecke_operators}
In this subsection we introduce two kinds of maps.
The both maps shift the index of Jacobi forms and these are generalizations of
the $V_l$-map in the book of Eichler-Zagier \cite{EZ}.
 
We define two groups $\mbox{GSp}_n^+(\Z) := \mbox{GSp}_n^+(\R) \cap \Z^{(2n,2n)}$ and
\begin{eqnarray*}
 \widetilde{\mbox{GSp}_n^+(\Z)}
 &:=&
 \left.
 \left\{
  (M,\varphi) \in \widetilde{\mbox{GSp}_n^+(\R)} \, \right| \, M \in \mbox{GSp}_n^+(\Z)
 \right\}.
\end{eqnarray*}

First we define index-shift maps for Jacobi forms of \textit{integral weight of matrix index}.
Let $\mathcal{M} = \smat{*}{*}{ * }{1} \in \mbox{Sym}_2^+$.
Let $X \in \mbox{GSp}_n^+(\Z)$ be a matrix such that the similitude of $X$ is $n(X)=p^2$ with a prime $p$.
For any $\phi \in J_{k,\mathcal{M}}^{(n)}$ we define the map
\begin{eqnarray*}
 \phi|V(X)
 &:=&
 \sum_{u,v \in (\Z/p\Z)^{(n,1)}}
 \sum_{M \in \Gamma_n \backslash \Gamma_n X \Gamma_n} \!\!\!\!\!
  \phi|_{k,\mathcal{M}} 
   \left(M\times
     \left(\begin{smallmatrix}p^2&0&0&0\\0&p&0&0\\0&0&1&0\\0&0&0&p\end{smallmatrix}\right),
     [((0,u),(0,v)),0_n]\right),
\end{eqnarray*}
where $(0,u),(0,v) \in (\Z/p\Z)^{(n,2)}$.
The above summation is a finite sum and do not depend on
the choice of the representatives $u$, $v$ and $M$.
A straightforward calculation shows that
$\phi|V(X)$ belongs to $J_{k,\mathcal{M}[\left(\begin{smallmatrix}p&0\\0&1\end{smallmatrix}\right)]}^{(n)}$.
Namely $V(X)$ is a map:
\begin{eqnarray*}
 V(X) \ : \ J_{k,\mathcal{M}}^{(n)} \rightarrow J_{k,\mathcal{M}[\left(\begin{smallmatrix}p&0\\0&1\end{smallmatrix}\right)]}^{(n)}.
\end{eqnarray*}
For the sake of simplicity we set
\begin{eqnarray*}
 V_{\alpha,n-\alpha}(p^2) 
 &:=&
 V(\mbox{diag}(1_{\alpha},p 1_{n-\alpha}, p^2 1_{\alpha}, p 1_{n-\alpha}))
\end{eqnarray*}
for any prime $p$ and for any $\alpha$ $(0\leq \alpha \leq n)$.

Next we shall define index-shift maps for Jacobi forms of \textit{half-integral weight of integer index}.
We assume that $p$ is \textit{an odd prime}.
Let $m$ be a positive integer.
Let $Y = (X,\varphi) \in \widetilde{\mbox{GSp}_n^+(\Z)}$ be a matrix such that the similitude of $X$ is $n(X) = p^2$.
For $\psi \in J_{k-\frac12,m}^{(n)}$ we define the map
\begin{eqnarray*}
 \psi|\widetilde{V}(Y)
 &:=&
 n(X)^{\frac{n(2k-1)}{4} - \frac{n(n+1)}{2}}\sum_{\tilde{M} \in \Gamma_0^{(n)}(4)^* \backslash \Gamma_0^{(n)}(4)^* Y \Gamma_0^{(n)}(4)^*}
  \psi|_{k-\frac12,m} (\tilde{M},[(0,0),0_n]) ,
\end{eqnarray*}
where the above summation is a finite sum and does not depend on the choice of the representatives $\tilde{M}$.
A direct computation shows
that $\psi|\widetilde{V}(Y)$ belongs to $J_{k-\frac12,mp^2}^{(n)}$.
For the sake of simplicity we set
\begin{eqnarray*}
 \tilde{V}_{\alpha,n-\alpha}(p^2)
 &:=&
 \tilde{V}((\mbox{diag}(1_{\alpha},p 1_{n-\alpha}, p^2 1_{\alpha}, p 1_{n-\alpha}),p^{\alpha/2}))
\end{eqnarray*}
for any odd prime $p$ and for any $\alpha$ $(0\leq \alpha \leq n)$.

As for $p=2$,
we will introduce  index-shift maps $\tilde{V}_{\alpha,n-\alpha}(4)$ in \S\ref{ss:hecke_p2},
which are maps from a certain subspace $J_{k-\frac12,m}^{(n)*}$ of
$J_{k-\frac12,m}^{(n)}$ to $J_{k-\frac12,4m}^{(n)}$.

\section{Fourier Jacobi expansion of  Siegel-Eisenstein series with matrix index}\label{s:fourier_matrix}

In this section we assume that $k$ is an even integer.

For $\mathcal{M} \in \mbox{Sym}_2^+$ and for an even integer $k$ 
we define the Jacobi-Eisenstein series of weight $k$ of index $\mathcal{M}$ by
\begin{eqnarray*}
 E_{k,\mathcal{M}}^{(n)}
 &:=& 
  \sum_{M \in \Gamma_{\infty}^{(n)}\backslash \Gamma_n}\sum_{\lambda \in \Z^{(n,2)}}
  1|_{k,\mathcal{M}}([(\lambda,0),0], M) .
\end{eqnarray*}
The Jacobi-Eisenstein series $E_{k,\mathcal{M}}^{(n)}$ is absolutely convergent for $k > n+3$ (cf.~\cite{Zi}).

The Siegel-Eisenstein series $E_k^{(n)}$ of weight $k$ of degree $n$ is defined by
\begin{eqnarray*}
 E_k^{(n)}(Z)
 &:=&
 \sum_{(C,D)} \det(C Z + D)^{-k},
\end{eqnarray*}
where $Z \in \H_n$ and $(C,D)$ runs over a complete set of representatives of the equivalence classes of
coprime symmetric pairs of size $n$.
Let
\begin{eqnarray*}
 E_k^{(n)}(\left(\begin{smallmatrix}\tau&z\\ {^t z}& \omega \end{smallmatrix}\right)) 
 &=& 
\displaystyle{\sum_{\mathcal{M} \in Sym_2^*}e_{k,\mathcal{M}}^{(n-2)}(\tau,z)\, e(\mathcal{M}\omega)}
\end{eqnarray*}
be the Fourier-Jacobi expansion of the Siegel-Eisenstein series $E_k^{(n)}$ of weight $k$ of degree $n$,
where $\tau \in \H_{n-2}$, $\omega \in \H_2$ and $z \in \C^{(n-2,2)}$.

The explicit formula for the Fourier-Jacobi expansion of Siegel-Eisenstein series
is given in \cite[Satz~7]{Bo} for arbitrary degree.
The purpose of this section is to express the Fourier-Jacobi coefficient $e_{k,\mathcal{M}}^{(n-2)}$ for
$\mathcal{M} = 
  \left(\begin{smallmatrix} *&* \\ * & 1 \end{smallmatrix}\right) \in \mbox{Sym}_2^+$
as a summation of Jacobi-Eisenstein series of matrix index (Proposition~\ref{prop:fourier_jacobi}.)

First, we obtain the following lemma.
\begin{lemma}\label{lemma:eisen_A}
For any $A \in \mbox{GL}_n(\Z)$ we have
 \begin{eqnarray*}
  E_{k,\mathcal{M}}^{(n)}(\tau,z)
  &=&
  E_{k,\mathcal{M}[A^{-1}]}^{(n)}(\tau,z {^t A})
\end{eqnarray*}
and
\begin{eqnarray*}
  e_{k,\mathcal{M}}^{(n)}(\tau,z)
  &=&
  e_{k,\mathcal{M}[A^{-1}]}^{(n)}(\tau,z {^t A}) .
 \end{eqnarray*}
\end{lemma}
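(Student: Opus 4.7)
The plan is to deduce both identities from a single $\mathrm{GL}_2(\Z)$-equivariance, using the fact that $\mathcal{M}\in\mathrm{Sym}_2^+$, so $A\in\mathrm{GL}_2(\Z)$ acts on the index by $\mathcal{M}\mapsto\mathcal{M}[A^{-1}]$ and on the elliptic variable by $z\mapsto z\,{}^t\!A$. For the second identity, the point is simply that the Siegel--Eisenstein series $E_k^{(n+2)}$ is invariant under the block-diagonal element $\tilde\eta_A:=\mathrm{diag}(1_n,A,1_n,{}^t\!A^{-1})\in\Gamma_{n+2}$. Applied to $\smat{\tau}{z}{{}^t z}{\omega}$ it sends this matrix to $\smat{\tau}{z\,{}^t\!A}{A\,{}^t z}{A\omega\,{}^t\!A}$, so the Fourier--Jacobi expansion yields
\[
\sum_{\mathcal{M}}e_{k,\mathcal{M}}^{(n)}(\tau,z)\,e(\mathcal{M}\omega)
=\sum_{\mathcal{M}}e_{k,\mathcal{M}}^{(n)}(\tau,z\,{}^t\!A)\,e\bigl(\mathcal{M}[A]\,\omega\bigr),
\]
and reindexing $\mathcal{M}\leadsto\mathcal{M}'[A^{-1}]$ followed by matching Fourier coefficients in $\omega$ gives the claim.

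For the identity on $E_{k,\mathcal{M}}^{(n)}$ itself I would introduce the Jacobi-group element
\[
\eta_A:=\bigl(1_{2n}\times\mathrm{diag}(A,{}^t\!A^{-1}),\,[(0,0),0]\bigr)\in G_{n,2}^J.
\]
A direct matrix calculation shows two things. First, $\eta_A$ commutes with $(M\times 1_4,[(0,0),0])$ for every $M\in\Gamma_n$, because the $\mathrm{Sp}_n$ and $\mathrm{Sp}_2$ blocks of the embedding occupy disjoint coordinates. Second, conjugating the Heisenberg part gives
\[
[(\lambda,0),0]\,\eta_A \;=\; \eta_A\,[(\lambda\,{}^t\!A^{-1},0),0],
\]
which is a one-line block computation in the explicit matrix form of the Jacobi group.

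Next I would evaluate $J_{k,\mathcal{M}[A^{-1}]}(\eta_A,(\tau,z))$. Since the $(C,D)$ of $\eta_A$ are $(0,1_n)$ and the Heisenberg data are trivial, the exponential factor is $1$, and the prefactor is $\det({}^t\!A^{-1})^k=\det(A)^{-k}=1$ because $k$ is even. Hence $(\phi|_{k,\mathcal{M}[A^{-1}]}\eta_A)(\tau,z)=\phi(\tau,z\,{}^t\!A)$ for any $\phi$. Applying $|_{k,\mathcal{M}[A^{-1}]}\eta_A$ term by term to the defining series of $E_{k,\mathcal{M}[A^{-1}]}^{(n)}$ (justified by absolute convergence for $k>n+3$) and using the cocycle property together with the two commutations above rewrites each summand as
\[
\bigl(1|_{k,\mathcal{M}[A^{-1}]}([(\lambda,0),0]\,M)\bigr)\,|_{k,\mathcal{M}[A^{-1}]}\eta_A
= 1|_{k,\mathcal{M}}\bigl([(\lambda\,{}^t\!A^{-1},0),0]\,M\bigr).
\]
Since $\lambda\mapsto\lambda\,{}^t\!A^{-1}$ is a bijection on $\Z^{(n,2)}$, substituting $\lambda'=\lambda\,{}^t\!A^{-1}$ gives exactly $E_{k,\mathcal{M}}^{(n)}(\tau,z)$ on the right-hand side, proving the identity.

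The only real obstacle is keeping the Jacobi-group bookkeeping straight: which $U,V$ blocks affect which factors of automorphy, and verifying that the Heisenberg conjugation truly has the form claimed. Once these two group-theoretic identities are in hand, the remainder is a formal manipulation of the Eisenstein sum, and the second half of the lemma is an immediate consequence of the transformation of $E_k^{(n+2)}$ under $\tilde\eta_A$.
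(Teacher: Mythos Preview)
Your proposal is correct and follows essentially the same approach as the paper: the second identity is derived exactly as you do, from the invariance of $E_k^{(n+2)}$ under the block-diagonal element $\mathrm{diag}(1_n,A,1_n,{}^t A^{-1})\in\Gamma_{n+2}$, and the first is what the paper dismisses as ``follows directly from the definition''---your computation with the Jacobi-group element $\eta_A$ and the reindexing $\lambda\mapsto\lambda\,{}^t A^{-1}$ is precisely the unwinding of that remark. The only difference is that you spell out the Jacobi-group bookkeeping explicitly, whereas the paper treats it as immediate.
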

\begin{proof}
The first identity follows directly from the definition.
The transformation formula
 $E_k^{(n+2)}\begin{pmatrix}\begin{pmatrix}1_n & \\ & A \end{pmatrix} 
                         \begin{pmatrix}\tau&z\\ {^t z}& \omega \end{pmatrix}
                         \begin{pmatrix}1_n & \\ & {^t A} \end{pmatrix} 
           \end{pmatrix}
  =
  E_k^{(n+2)}\begin{pmatrix}
            \begin{pmatrix}\tau&z\\ {^t z}& \omega \end{pmatrix}
           \end{pmatrix}
 $
gives the second identity.
\end{proof}

Let $m$ be a positive integer. We denote by $D_0$ the discriminant of $\mathbb{Q}(\sqrt{-m})$,
and we put $f := \sqrt{\frac{m}{|D_0|}}$. We note that $f$ is a positive integer if $-m \equiv 0, 1 \!\!\mod 4$.

We denote by $h_{k-\frac12}(m)$ the $m$-th Fourier coefficient
of the Cohen-Eisenstein series of weight $k-\frac12$ (cf.~Cohen~\cite{Co}).
The following formula is known (cf.~\cite{Co},~\cite{EZ}):
\begin{eqnarray*}
 &&
 h_{k-\frac12}(m) \\
&=& 
 \begin{cases}
  h_{k-\frac12}(|D_0|)\, m^{k-\frac32} \sum_{d|f}  \mu(d) \left(\frac{D_0}{d} \right) d^{1-k} \sigma_{3-2k}\left(\frac{f}{d} \right),
  & \mbox{if } -m \equiv 0, 1 \mod 4,
 \\
  0,
  & \mbox{otherwise},
 \end{cases}
\end{eqnarray*}
where
we define
$\sigma_{a}(b) := \displaystyle{\sum_{d|b}d^a}$.

We assume $-m \equiv 0, 1 \!\! \mod 4$. Let $D_0$ and $f$ be as above.
We define
\[
g_k(m) := \sum_{d|f} \mu(d) h_{k-\frac12}\left(\frac{m}{d^2}\right).
\]

We will use the following lemma for the proof of Theorem \ref{thm:deg2} and Theorem \ref{thm:deg3}.
\begin{lemma}\label{lemma:gk}
Let $m$ be a natural number such that $-m \equiv 0$, $1 \mod 4$. 
Then for any prime $p$ we have
\begin{eqnarray*}
 g_k(p^2m) &=& 
  \left(p^{2k-3} - \left(\frac{-m}{p} \right) p^{k-2} \right) g_k(m) . 
\end{eqnarray*}
\end{lemma}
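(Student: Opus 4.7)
The plan is to reduce the identity to a multiplicative, prime-by-prime verification. Substituting the explicit formula for $h_{k-\frac12}(m)$ stated just before the lemma, I would rewrite
\[
g_k(m) = H \cdot G(f), \qquad H := h_{k-\frac12}(|D_0|)\,|D_0|^{k-\frac32},
\]
where $b(e) := e^{2k-3} a(e)$ with $a(e) := \sum_{d|e} \mu(d)\left(\frac{D_0}{d}\right) d^{1-k} \sigma_{3-2k}(e/d)$, and $G := \mu \ast b$ is the Dirichlet convolution. The point is that for $d \mid f$ the conductor of $m/d^2$ is $f/d$ (the fundamental discriminant $D_0$ is unchanged), so $h_{k-\frac12}(m/d^2) = H\cdot b(f/d)$ and the definition of $g_k$ collapses to $H \cdot G(f)$. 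Since $a$ is a Dirichlet convolution of multiplicative functions, $a$, $b$, and $G$ are all multiplicative. Because $p^2 m = |D_0|(pf)^2$, the lemma is equivalent to
\[
G(pf) = \left(p^{2k-3} - \left(\tfrac{-m}{p}\right) p^{k-2}\right) G(f),
\]
which I would split according to whether $p$ divides $f$.

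In the case $p \nmid f$, multiplicativity gives $G(pf) = G(p)\,G(f)$. Computing $a(p) = (1+p^{3-2k}) - \left(\frac{D_0}{p}\right) p^{1-k}$ yields $b(p) = p^{2k-3} + 1 - \left(\frac{D_0}{p}\right)p^{k-2}$, hence
\[
G(p) = b(p) - b(1) = p^{2k-3} - \left(\tfrac{D_0}{p}\right) p^{k-2}.
\]
Since $-m = D_0 f^2$ and $p \nmid f$ imply $\left(\frac{-m}{p}\right) = \left(\frac{D_0}{p}\right)$, this case is immediate.

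In the case $p \mid f$, write $f = p^c f'$ with $c \geq 1$ and $p \nmid f'$. By multiplicativity, $G(pf)/G(f) = G(p^{c+1})/G(p^c)$, while $\left(\frac{-m}{p}\right) = 0$ since $p \mid m$. So it remains to prove
\[
G(p^{c+1}) = p^{2k-3}\, G(p^c) \qquad (c \geq 1).
\]
Using $G(p^j) = b(p^j) - b(p^{j-1})$ for $j \geq 1$, this is in turn equivalent to the three-term recurrence
\[
b(p^{c+1}) - (1+p^{2k-3})\, b(p^c) + p^{2k-3}\, b(p^{c-1}) = 0,
\]
which is verified from the local formula $a(p^j) = \sigma_{3-2k}(p^j) - \left(\frac{D_0}{p}\right) p^{1-k}\sigma_{3-2k}(p^{j-1})$ together with the standard recurrence $\sigma_{3-2k}(p^{j+1}) = (1+p^{3-2k})\sigma_{3-2k}(p^j) - p^{3-2k}\sigma_{3-2k}(p^{j-1})$ for the divisor sums.

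The main conceptual step is recognizing that $g_k(m)/H$ depends only on the conductor $f$ and is a multiplicative function of it; once this is in place, the only real work is the case split between $p \nmid f$ (where the Legendre symbol gives the non-trivial correction) and $p \mid f$ (where the Legendre symbol vanishes but the local Euler factor still produces the factor $p^{2k-3}$), and the two bookkeeping calculations above. I expect the three-term recurrence for $b$ on prime powers to be the main routine obstacle, but it is entirely elementary given the closed form of $\sigma_{3-2k}(p^j)$.
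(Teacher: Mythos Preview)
Your proposal is correct and follows essentially the same route as the paper: both arguments reduce to showing that $g_k(m)/H$ is a multiplicative function of the conductor $f$, and then check the local factor at $p$. The only difference is in execution of the local step: the paper computes the closed form $G(p^{l}) = p^{(2k-3)l} - \left(\frac{D_0}{p}\right) p^{k-2+(2k-3)(l-1)}$ directly (from which the ratio $G(p^{l+1})/G(p^l) = p^{2k-3}$ is immediate), whereas you verify this ratio via the three-term recurrence for $b(p^j)$.
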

\begin{proof}
Let $D_0$, $f$ be as above.
We have
\begin{eqnarray*}
 h_{k-\frac12}(m)
 &=&
 h_{k-\frac12}(|D_0|) |D_0|^{k-\frac32}
 \prod_{q|f}
  \left\{
   \sigma_{2k-3}(q^{l_q})
   - \left(\frac{D_0}{q}\right)q^{k-2}\sigma_{2k-3}(q^{l_q-1})
  \right\} ,
\end{eqnarray*}
where $q$ runs over all primes which divide $f$,
and where we put $l_q := \mbox{ord}_q(f)$.
In particular, the function $h_{k-\frac12}(m)(h_{k-\frac12}(|D_0|) |D_0|^{k-\frac32})^{-1}$
is multiplicative with respect to $f$.
We have also
\begin{eqnarray*}
 && h_{k-\frac12}(|D_0|q^{2l_q}) - h_{k-\frac12}(|D_0|q^{2l_q-2})
\\
 &=&
 h_{k-\frac12}(|D_0|) |D_0|^{k-\frac32}
 \left(
   q^{(2k-3)l_q}
   - \left(\frac{D_0}{q}\right)q^{k-2 + (2k-3)(l_q-1)}
 \right),
\end{eqnarray*}
Thus
\begin{eqnarray*}
 g_k(m)
 &=&
 h_{k-\frac12}(|D_0|) |D_0|^{k-\frac32}
 \prod_{q|f}\frac{h_{k-\frac12}(|D_0| q^{2l_q}) - h_{k-\frac12}(|D_0| q^{2l_q-2})}{h_{k-\frac12}(|D_0|) |D_0|^{k-\frac32}}\\
 &=&
 h_{k-\frac12}(|D_0|) |D_0|^{k-\frac32}
 \prod_{q|f}\left(q^{(2k-3)l_q}
   - \left(\frac{D_0}{q}\right)q^{k-2 + (2k-3)(l_q-1)}\right).
\end{eqnarray*}
The lemma follows from this identity, because
 $\left(\frac{-m}{p}\right) =  0$ if $p|f$; 
$\left(\frac{-m}{p}\right) =\left(\frac{D_0}{p}\right)$ if $p {\not|} f$.
\end{proof}

We obtain the following proposition.

\begin{prop}\label{prop:fourier_jacobi}
For $\mathcal{M} = \begin{pmatrix} * & * \\ * & 1 \end{pmatrix} \in \mbox{Sym}_2^+$ 
we put $m = \det(2\mathcal{M})$.
Let $D_0$, $f$ be as above.
If $k > n + 1$, then
\begin{eqnarray*}
 e_{k,\mathcal{M}}^{(n-2)}(\tau,z)
 &=&
 \sum_{d|f} g_k\!\left(\frac{m}{d^2}\right) E_{k,\mathcal{M}[{^t W_d}^{-1}]}^{(n-2)}(\tau,z W_d) ,
\end{eqnarray*}
where we chose a matrix $W_d \in \mbox{GL}_2(\Qq)\cap \Z^{(2,2)}$ for each $d$ which satisfies the conditions
$\det(W_d) = d$ and $W_d^{-1}\mathcal{M} {^t W_d}^{-1} = \begin{pmatrix} * & * \\ * & 1 \end{pmatrix} \in 
\mbox{Sym}_2^+$.
The above summation is independent of the choice of the matrix $W_d$.
\end{prop}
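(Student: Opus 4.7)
My plan is to deduce the formula from Böcherer's explicit Fourier-Jacobi expansion of Siegel-Eisenstein series \cite[Satz~7]{Bo}, reorganized according to the conductor factorization $m = |D_0|f^2$. Specialized to $2\times 2$ matrix index, Böcherer's formula expresses $e_{k,\mathcal{M}}^{(n-2)}(\tau,z)$ as a finite sum $\sum_W c_W\, E_{k,\mathcal{M}[{^tW}^{-1}]}^{(n-2)}(\tau, zW)$ ranging over integer matrices $W \in \mathrm{GL}_2(\mathbb{Q}) \cap \mathbb{Z}^{(2,2)}$ for which $\mathcal{M}[{^tW}^{-1}]$ is half-integral, with coefficients $c_W$ built from Cohen numbers $h_{k-\frac12}(m/(\det W)^2)$.

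First I would group the terms of this sum by $d := \det(W)$. Since $\mathcal{M}$ has bottom-right entry $1$, the requirement that $\mathcal{M}[{^tW}^{-1}]$ be half-integral (equivalently, that $h_{k-\frac12}(m/d^2)$ not vanish on discriminant grounds) forces $d \mid f$. Within each such $d$, Lemma~\ref{lemma:eisen_A} shows that $E_{k,\mathcal{M}[{^tW}^{-1}]}^{(n-2)}(\tau, zW)$ depends only on the $\mathrm{GL}_2(\mathbb{Z})$-orbit of $W$; combined with the freedom to use $\mathrm{GL}_2(\mathbb{Z})$ to bring $\mathcal{M}[{^tW}^{-1}]$ into the normalized form $\smat{*}{*}{*}{1}$, this lets me replace every contributing $W$ with the fixed representative $W_d$ chosen in the statement, and simultaneously justifies the claimed independence of the sum on this choice.

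Second, I would identify the resulting coefficient $a_d$ of $E_{k,\mathcal{M}[{^tW_d}^{-1}]}^{(n-2)}(\tau, zW_d)$ with $g_k(m/d^2)$. The raw coefficient coming from Böcherer's formula collects all integer matrices $W$ with $d \mid \det W$ and $\mathcal{M}[{^tW}^{-1}]$ half-integral; after reindexing by $e = \det(W)/d$ with $e \mid f/d$, the Möbius-weighted combination that appears is exactly $\sum_{e \mid f/d}\mu(e)\, h_{k-\frac12}(m/(de)^2) = g_k(m/d^2)$ by definition. The hard part is this last bookkeeping step: one must verify that Böcherer's local coefficients, when reassembled along the divisors of $f$, produce precisely the Möbius convolution defining $g_k$. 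This reduces to a local check at each prime $p \mid f$, where the multiplicative structure of $h_{k-\frac12}$ derived in the proof of Lemma~\ref{lemma:gk} should convert the local divisor sum into the local Möbius factor.
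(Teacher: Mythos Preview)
Your overall strategy matches the paper's: invoke B\"ocherer's Satz~7, reorganize by the determinant of the $2\times 2$ matrix, and recognize the resulting coefficient as $g_k(m/d^2)$. But your account of the M\"obius step misses the actual mechanism.

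Satz~7 does \emph{not} directly express $e_{k,\mathcal{M}}^{(n-2)}$ as a finite linear combination $\sum_W c_W\,E_{k,\mathcal{M}[{^tW}^{-1}]}^{(n-2)}(\tau,zW)$ with $c_W$ a Cohen number. What it gives is
\[
 e_{k,\mathcal{M}}^{(n-2)}(\tau,z)
 \;=\;
 \sum_{N_1} a_2^k\bigl(\mathcal{M}[{^tN_1}^{-1}]\bigr)
 \sum_{\substack{N_3 \in \Z^{(n-2,2)} \\ \left(\begin{smallmatrix}N_1\\N_3\end{smallmatrix}\right)\ \text{primitive}}}
 f(\mathcal{M},N_1,N_3;\tau,z),
\]
with $N_1$ running over $M_2^n(\Z)^*/\mathrm{GL}_2(\Z)$. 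After choosing representatives $W_l = \smat{l}{x}{0}{1}$ (one per $l$ with $l^2\mid m$), there is exactly one term per determinant value, so your phrase ``collects all integer matrices $W$ with $d\mid \det W$'' does not describe what is happening. The Jacobi--Eisenstein series $E_{k,\mathcal{M}[{^tW_l}^{-1}]}^{(n-2)}$ corresponds to the \emph{unrestricted} sum over $N_3\in\Z^{(n-2,2)}$; the M\"obius function enters precisely when one removes the primitivity constraint on $\left(\begin{smallmatrix}W_l\\N_3\end{smallmatrix}\right)$ via inclusion--exclusion, yielding $\sum_{a\mid l}\mu(a)$ times a full $N_3$-sum (after rescaling $N_3\mapsto N_3\smat{a}{0}{0}{1}$). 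Only then does one obtain a genuine expansion in Jacobi--Eisenstein series. Swapping the order of summation (set $d=l/a$) and using $a_2^k(\mathcal{M}')=h_{k-\frac12}(\det 2\mathcal{M}')$ together with the vanishing of $h_{k-\frac12}$ off discriminants, the inner sum becomes $\sum_{a\mid f/d}\mu(a)\,h_{k-\frac12}(m/(ad)^2)=g_k(m/d^2)$ on the nose; no local check or appeal to the multiplicativity in Lemma~\ref{lemma:gk} is needed here.
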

\begin{proof}
The Satz 7 in \cite{Bo} is the essential part of this proof.
For $\mathcal{M}' \in \mbox{Sym}_n^+$ we denote by $a_2^k(\mathcal{M}')$ the $\mathcal{M}'$-th
Fourier coefficient of Siegel-Eisenstein series of weight $k$ of degree 2.
We put 
\begin{eqnarray*} 
 \mbox{M}_2^n(\Z)^*
 &:=&
 \left\{ N \in \Z^{(2,2)}
  \, | \, 
  \det(N)\neq 0 
  \mbox{ and there exists }
   V = \left(\begin{smallmatrix}N&*\\ *&*\end{smallmatrix}\right) \in \mbox{GL}_n(\Z)
 \right\} .
\end{eqnarray*}
We call a matrix $N \in \Z^{(n,2)}$ \textit{primitive} if there exists a matrix $V \in \mbox{GL}_n(\Z)$
such that $V = (N\ *)$.
From~\cite[Satz~7]{Bo} we have 
\begin{eqnarray*}
 e_{k,\mathcal{M}}^{(n-2)}(\tau,z)
 &=& 
 \sum_{\begin{smallmatrix} N_1 \in M_2^n(\Z)^*/ GL(2,\Z) \\ 
                           N_1^{-1} \mathcal{M} {^t N_1}^{-1} \in Sym_2^+\end{smallmatrix}}
 a_2^k( \mathcal{M} [{^tN_1}^{-1}] )
 \sum_{\begin{smallmatrix} N_3 \in \Z^{(n-2,2)} \\ \left(\begin{smallmatrix} N_1\\N_3 \end{smallmatrix} \right) : primitive \end{smallmatrix}}
 f(\mathcal{M},N_1,N_3;\tau,z) ,
\end{eqnarray*}
where we define
\begin{eqnarray*}
\\
 &&
 f(\mathcal{M},N_1,N_3;\tau,z)
\\
 &:=& 
 \sum_{\left(\begin{smallmatrix}A&B\\C&D \end{smallmatrix}\right) \in \Gamma_{\infty}^{(n-2)} \backslash \Gamma_{n-2}}
 \!\!\!\!\!\!\!\!
 \det(C\tau+D)^{-k} \\
 &&
  \times \, e( \mathcal{M} 
  \left\{ 
   -{^t z}(C \tau + D)^{-1}C  z + {^t z}(C \tau + D)^{-1} N_3 N_1^{-1} \right. \\
&& \left. 
   + {^t N_1}^{-1} {^t N_3} {^t(C \tau + D)^{-1}} z
   + {^t N_1}^{-1} {^t N_3} {(A \tau + B) (C \tau + D)^{-1} N_3 N_1^{-1}}
  \right\}) .
\end{eqnarray*}

For positive integer $l$ we chose a matrix $W_l \in \Z^{(2,2)}$ which satisfies three conditions
$\det(W_l) = l$,
$W_l^{-1}\mathcal{M} {^t W_l}^{-1} \in \mbox{Sym}_2^+$
and $W_l^{-1}\mathcal{M} {^t W_l}^{-1} = \begin{pmatrix} * & * \\ * & 1 \end{pmatrix}$.
Because of these conditions,
$W_l$ has the form
 $W_l = \begin{pmatrix}l&x\\0&1\end{pmatrix}$
with some $x \in \Z$.
The set $W_l \mbox{GL}(2,\Z)$ is uniquely determined for each positive integer $l$
such that $l^2|m$.

Thus
\begin{eqnarray*}
 &&
 e_{k,\mathcal{M}}^{(n-2)}(\tau,z)
\\
 &=&
 \sum_{\begin{smallmatrix} l \\ l^2 | m \end{smallmatrix}} 
  a_2^k(\mathcal{M} [{^tW_l}^{-1}])
 \sum_{a | l} \mu(a) \!\!\!
 \sum_{N_3 \in \Z^{(n-2,2)}}
   f(\mathcal{M},W_l,N_3 \left(\begin{smallmatrix}a&0\\0&1\end{smallmatrix} \right);\tau,z)
\\
 &=&
 \sum_{\begin{smallmatrix} l \\ l^2 | m \end{smallmatrix}} 
  a_2^k(\mathcal{M} [{^tW_l}^{-1}])
 \sum_{a | l} \mu(a) \!\!\!
 \sum_{N_3 \in \Z^{(n-2,2)}}
   f(\mathcal{M} [{^t W_l}^{-1} \left(\begin{smallmatrix}a&0\\0&1\end{smallmatrix} \right) ],
     1_2,N_3;
     \tau,zW_l \left(\begin{smallmatrix}a&0\\0&1\end{smallmatrix} \right)^{-1})
\end{eqnarray*}
Therefore
\begin{eqnarray*}
 &&
 e_{k,\mathcal{M}}^{(n-2)}(\tau,z) \\
 &=&
 \sum_{\begin{smallmatrix} l \\ l^2 | m \end{smallmatrix}} 
  a_2^k(\mathcal{M} [{^tW_l}^{-1}])
 \sum_{a | l} \mu(a)
 \, E_{k,\mathcal{M}[{^t W_l^{-1}}\left(\begin{smallmatrix} a& \\ & 1 \end{smallmatrix}\right)]}^{(n-2)}
    (\tau,z W_l \left(\begin{smallmatrix} a^{-1}& \\ & 1 \end{smallmatrix}\right))
\\
 &=&
 \sum_{\begin{smallmatrix} d \\ d^2 | m \end{smallmatrix}}
 E_{k,\mathcal{M}[{^t W_d^{-1}}]}^{(n-2)}
    (\tau,z W_d)
 \sum_{\begin{smallmatrix}a \\ a^2 | \frac{m}{d^2}\end{smallmatrix}}
  \mu(a)\, a_2^k(\mathcal{M} [{^tW_d}^{-1}\left(\begin{smallmatrix} a^{-1}& \\ & 1 \end{smallmatrix}\right)]).
\end{eqnarray*}
Here we have
 $a_2^k(\mathcal{M}') = h_{k-\frac12}(\det(2\mathcal{M}'))$
for any $\mathcal{M}' = \left(\begin{smallmatrix}*&*\\ *&1\end{smallmatrix}\right)\in \mbox{Sym}_2^+$.
Moreover, if
 $m \not \equiv 0,3 \mod 4$,
then
 $h_{k-\frac12}(m) = 0$.
Hence
\begin{eqnarray*}
 e_{k,\mathcal{M}}^{(n-2)}(\tau,z)
 &=&
 \sum_{\begin{smallmatrix} d \\ d | f \end{smallmatrix}}
 E_{k,\mathcal{M}[{^t W_d^{-1}}]}^{(n-2)}
    (\tau,z W_d)
 \sum_{\begin{smallmatrix}a \\ a | \frac{f}{d}\end{smallmatrix}}
  \mu(a)\, h_{k-\frac12}(\frac{m}{a^2d^2}).
\end{eqnarray*}
Therefore the proposition follows.
\end{proof}

\section{Relation between Jacobi forms of integer index and of matrix index}
In this section we fix a positive definite half-integral symmetric matrix
$\mathcal{M} \in \mbox{Sym}_2^+$,
and we assume that $\mathcal{M}$ has a form
$\mathcal{M} = \begin{pmatrix} l & \frac12 r \\ \frac12 r & 1 \end{pmatrix}$
with integers $l$ and $r$.

The purpose of this section is to give a map $\iota_{\mathcal{M}}$,
which is a map from certain holomorphic functions on $\mathfrak{H}_n \times \C^{(n,2)}$
to holomorphic functions on $\mathfrak{H}_n \times \C^{(n,1)}$.

The restriction of $\iota_{\mathcal{M}}$ gives a map from a certain subspaces
$J_{k,\, \mathcal{M}}^{(n)*}$ of $J_{k,\, \mathcal{M}}^{(n)}$
to a certain subspace
$J_{k-\frac12,\, \det(2\mathcal{M})}^{(n)*}$ of $J_{k-\frac12,\, \det(2\mathcal{M})}^{(n)}$
(cf. Lemma~\ref{lemma:iota}).
Moreover, we shall show the compatibility of the restriction of this map $\iota_{\mathcal{M}}$
with index-shift maps which shift indices of spaces of Jacobi forms.
(cf. Proposition~\ref{prop:iota_U} and Proposition~\ref{prop:iota_hecke}).
Furthermore we define index-shift maps for $J_{k-\frac12,\, \det(2\mathcal{M})}^{(n)*}$ at $p=2$
through the map $\iota_{\mathcal{M}}$ (cf. \S\ref{ss:hecke_p2}).

\subsection{An expansion of Jacobi forms of integer index}\label{ss:fj_expansion}

In this subsection we shall introduce certain spaces $M_k^*(\Gamma_n)$,
$J_{k,1}^{(n-1)*}$ and $J_{k,\mathcal{M}}^{(n-2)*}$.
Moreover, we consider an expansion of Jacobi forms of integer index.

The $\C$-vector subspace $M_k^*(\Gamma_n)$ of $M_k(\Gamma_n)$ denotes
the image of the Ikeda lifts in $M_k(\Gamma_n)$.
We remark that this subspace $M_k^*(\Gamma_n)$ contains the Siegel-Eisenstein series $E_k^{(n)}$.
In the case $n=2$, the space $M_k^*(\Gamma_2)$ coincides the Maass space. 

We denote by $J_{k,1}^{(n)}$ the space of Jacobi forms of weight $k$ of index $1$ of degree $n$
(cf. \S\ref{ss:jacobi_forms_of_matrix_index}).
The $\C$-vector subspace $J_{k,1}^{(n-1)*}$ of $J_{k,1}^{(n-1)}$ denotes the image of $M_k^*(\Gamma_n)$
through the Fourier-Jacobi expansion with index $1$.
Moreover, the $\C$-vector subspace $J_{k,\mathcal{M}}^{(n-2)*}$ of $J_{k,\mathcal{M}}^{(n-2)}$
denotes the image of $M_k^*(\Gamma_n)$ in $J_{k,\mathcal{M}}^{(n-2)}$
through the Fourier-Jacobi expansion with index $\mathcal{M}$,
where $\mathcal{M}$ is a $2 \times 2$ matrix.

Let $\phi_1(\tau,z) \in J_{k,1}^{(n-1)}$ be a Jacobi form of index 1.
We regard $\phi_1(\tau,z)\, e(\omega)$ as a holomorphic function on $\H_n$,
where $\tau \in \H_{n-1}$, $z \in \C^{(n-1,1)}$ and $\omega \in \H_1$
such that $\smat{\tau}{z}{^t z}{\omega} \in \H_n$.
We have an expansion
\begin{eqnarray*}
 \phi_1(\tau,z) e(w)
 &=&
 \sum_{\begin{smallmatrix}
        S \in Sym_2^+ \\
        S = \smat{ *}{ *}{ *}{ 1}
       \end{smallmatrix}}
      \phi_{\mathcal{S}}(\tau',z') e(\mathcal{S} \omega'),
\end{eqnarray*}
where $\tau' \in \H_{n-2}$, $z' \in \C^{(n-2,2)}$ and $\omega' \in \H_2$ such that
$\smat{\tau}{z}{^t z}{\omega} = \smat{\tau'}{z'}{^t z'}{\omega'} \in \H_n$.
Because the group $\Gamma_{n-2,2}^J$ is a subgroup of $\Gamma_{n-1,1}^J$,
the form $\phi_{\mathcal{S}}$ belongs to $J_{k,\mathcal{S}}^{(n-2)}$.
We denote this map by $\mbox{FJ}_{1,\mathcal{S}}$, namely we have a map
\begin{eqnarray*}
 \mbox{FJ}_{1,\mathcal{S}}: J_{k,1}^{(n-1)} \rightarrow J_{k,\mathcal{S}}^{(n-2)}.
\end{eqnarray*}

\subsection{Fourier-Jacobi expansion of Siegel modular forms of half-integral weight}
The purpose of this subsection is to show the following lemma.
\begin{lemma} \label{lemma:fj_half}
Let $F\smat{\tau}{z}{^t z}{\omega} = \sum_{m \in \Z}\phi_m(\tau,z) e(m\omega)$
be a Fourier-Jacobi expansion of $F \in M_{k-\frac12}(\Gamma_0^{(n+1)}(4))$,
where $\tau \in \H_n$, $\omega \in \H_1$ and $z \in \C^{(n,1)}$.
Then $\phi_m \in J_{k-\frac12,m}^{(n)}$ for any natural number $m$.
\end{lemma}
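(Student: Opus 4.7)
The aim is to verify the two conditions of the definition in \S\ref{ss:def_jacobi_half_weight} for each Fourier-Jacobi coefficient $\phi_m$, exploiting a block embedding of the Jacobi group into the symplectic group of one higher degree.

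I first treat the transformation law. The plan is to embed $\Gamma_{n,1}^{J*}$ into $\widetilde{\Gamma_0^{(n+1)}(4)}$ by sending $M = \smat{A}{B}{C}{D} \in \Gamma_0^{(n)}(4)$ to the element $\tilde M \in \Gamma_0^{(n+1)}(4)$ that places $M$ in the $\tau$-block of $Z = \smat{\tau}{z}{^tz}{\omega}$ and keeps the $\omega$-block fixed, and by sending each Heisenberg element $[(\lambda,\mu),\kappa] \in H_{n,1}(\Z)$ to the natural unipotent matrix in $\mbox{Sp}_{n+1}(\Z)$. A short calculation gives $\det(\tilde C Z + \tilde D) = \det(C\tau+D)$, and the compatibility of the Igusa theta multiplier with the block embedding yields $\theta^{(n+1)}(\tilde M Z)/\theta^{(n+1)}(Z) = \theta^{(n)}(M\tau)/\theta^{(n)}(\tau)$, so that the embedding actually lands in $\Gamma_0^{(n+1)}(4)^*$. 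Since the factor of automorphy of $F$ at such a $\tilde\gamma^*$ is independent of $\omega$, writing $F|\tilde\gamma^* = F$ as a Fourier series in $\omega$ and comparing the coefficients of $e(m\omega)$ on the two sides produces exactly $\phi_m|_{k-\frac12,m}\gamma^* = \phi_m$, which is condition~(1).

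For condition~(2), the Koecher principle applied to $F$ (valid since $n+1 \geq 2$) furnishes an expansion $F(Z) = \sum_{N \in \mbox{Sym}_{n+1}^*,\, N \geq 0} c(N) e(NZ)$. Writing $N = \smat{N_1}{r/2}{{^tr}/2}{m}$ and extracting the $e(m\omega)$-term shows that $\phi_m(\tau,z)$ has the required semi-positive Fourier expansion, giving condition~(2) for $\gamma = 1$ with $h = 1$. For a general $\gamma \in \Gamma_{n,1}^J$, let $\tilde\gamma \in \Gamma_{n+1}$ be the analogous block embedding of $\gamma$ and consider $F^2 \in M_{2k-1}(\Gamma_0^{(n+1)}(4))$. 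Then $F^2|_{2k-1}\tilde\gamma$ is a Siegel modular form for the congruence subgroup $\tilde\gamma^{-1}\Gamma_0^{(n+1)}(4)\tilde\gamma \cap \Gamma_0^{(n+1)}(4)$, and hence admits a Fourier expansion with semi-positive indices and some common denominator $h$. The identity $(\phi_m|_{k-\frac12,m}\gamma)^2 = \phi_m^2|_{2k-1,2m}\gamma$, which follows by squaring the half-integral-weight factor of automorphy, together with the same $e(m\omega)$-coefficient extraction applied to $F^2|_{2k-1}\tilde\gamma$, yields the required expansion of $\phi_m^2|_{2k-1,2m}\gamma$.

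The main obstacle in a careful write-up is the bookkeeping of the theta multiplier under the block embedding, namely the identity $\theta^{(n+1)}(\tilde M Z)/\theta^{(n+1)}(Z) = \theta^{(n)}(M\tau)/\theta^{(n)}(\tau)$; this compatibility is the single bridge that transports the half-integral-weight automorphy factor of $F$ into the correct factor for $\phi_m$. Once it is in place, both conditions reduce to routine matching of Fourier coefficients in $\omega$.
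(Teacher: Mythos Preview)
Your approach is essentially the same as the paper's: reduce everything to the theta-multiplier compatibility
\[
\frac{\theta^{(n+1)}(\tilde\gamma\cdot Z)}{\theta^{(n+1)}(Z)}
=\frac{\theta^{(n)}(M\tau)}{\theta^{(n)}(\tau)}
\quad\text{for }\gamma=(M,[(\lambda,\mu),\kappa])\in\Gamma_{n,1}^J,
\]
and then read off condition~(1) by comparing $e(m\omega)$-coefficients. The paper, however, actually \emph{proves} this identity rather than asserting it, and the argument is worth recording since you flag it as the main obstacle: one first observes that the square of the left side equals $\det(C\tau+D)\left(\tfrac{-4}{\det D}\right)$, which is visibly independent of $z$ and $\omega$; hence the holomorphic quotient itself is independent of $z,\omega$, and substituting $z=0$ together with the factorization $\theta^{(n+1)}\!\smat{\tau}{0}{0}{\omega}=\theta^{(n)}(\tau)\,\theta^{(1)}(\omega)$ gives the claim directly. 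On the other hand, the paper says nothing about condition~(2), whereas your treatment via the Koecher principle for $F$ and for $F^2|_{2k-1}\tilde\gamma$ is a useful addition.
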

\begin{proof}
Due to the definition of $J_{k-\frac12,m}^{(n)}$, it is enough to show the identity
\begin{eqnarray*}
 \theta^{(n+1)}(\gamma \cdot \smat{\tau}{z}{^t z}{\omega})\, \theta^{(n+1)}(\smat{\tau}{z}{^t z}{\omega})^{-1}
 &=&
 \theta^{(n)}(\smat{A}{B}{C}{D} \cdot \tau)\, \theta^{(n)}(\tau)^{-1}
\end{eqnarray*}
for any $\gamma = (\smat{A}{B}{C}{D}, [(\lambda,\mu),\kappa]) \in \Gamma_{n,1}^J$, $\smat{\tau}{z}{^t z}{\omega} \in \H_{n+1}$
such that $\tau \in \H_n$, $\omega \in \H_1$.
Here $\theta^{(n+1)}$ and $\theta^{(n)}$ are the theta constants (cf. \S\ref{ss:double_covering_groups}.)

For any $M = \smat{A'}{B'}{C'}{D'} \in \Gamma_0^{(n+1)}(4)$,
it is known that 
\begin{eqnarray*}
 \left(\theta^{(n+1)}(M \cdot Z)\, \theta^{(n+1)}(Z)^{-1}\right)^2
 &=&
 \det(C' Z + D') \left(\frac{-4}{\det D'}\right),
\end{eqnarray*}
where $Z \in \H_{n+1}$.
Here $\left(\frac{-4}{\det D'}\right) \left( = (-1)^{\frac{\det D' - 1}{2}} \right)$ is the quadratic symbol.
Hence for any $\gamma = (\smat{A}{B}{C}{D}, [(\lambda,\mu),\kappa]) \in \Gamma_{n,1}^J$
we obtain
\begin{eqnarray*}
 \left(\theta^{(n+1)}(\gamma \cdot Z)\, \theta^{(n+1)}(Z)^{-1}\right)^2
 &=&
 \det(C \tau + D) \left(\frac{-4}{\det D}\right),
\end{eqnarray*}
where $Z = \smat{\tau}{z}{^t z}{\omega} \in \H_{n+1}$ with $\tau \in \H_n$.
In particular, the holomorphic function $\frac{\theta^{(n+1)}(\gamma\cdot Z)}{ \theta^{(n+1)}(Z)}$
does not depend on the choice of
$z \in \C^{(n,1)}$ and of $\omega \in \H_1$.
We substitute $z = 0$ into $\frac{\theta^{(n+1)}(\gamma\cdot Z)}{ \theta^{(n+1)}(Z)}$ and a straightforward calculation shows
\begin{eqnarray*}
 \frac{\theta^{(n+1)}(\gamma \cdot \smat{\tau}{0}{0}{\omega})}{\theta^{(n+1)}(\smat{\tau}{0}{0}{\omega})}
 &=&
 \frac{\theta^{(n)}(\smat{A}{B}{C}{D}\cdot \tau)}{\theta^{(n)}(\tau)}.
\end{eqnarray*}
Hence we conclude this lemma.
\end{proof}

\subsection{The map $\sigma$ and the subspace $J_{k-\frac12,m}^{(n)*}$}\label{ss:fj_expansion_half}
In this subsection we introduce generalized Cohen-Eisenstein series $\mathcal{H}_{k-\frac12}^{(n+1)}$
and consider the Fourier-Jacobi expansion of $\mathcal{H}_{k-\frac12}^{(n+1)}$.
Moreover, we will introduce a subspace $J_{k-\frac12,m}^{(n)*}$ of $J_{k-\frac12,m}^{(n)}$.

Let $M_{k-\frac12}^+(\Gamma_0^{(n+1)}(4))$ be the generalized plus-space introduced in~\cite[page 112]{Ib},
which is a generalization of the Kohnen plus-space for higher degrees:
\begin{eqnarray*}
 M_{k-\frac12}^+(\Gamma_0^{(n+1)}(4)) 
 &:=& \left\{ F \in M_{k-\frac12}(\Gamma_0^{(n+1)}(4))  \left| 
 \begin{matrix} \mbox{the coefficients }
 A(N) = 0 \mbox{ unless } \\
 N+ (-1)^k R {^t R} \in 4\, \mbox{Sym}_{n+1}^* \\
 \mbox{ for some } R \in \Z^{(n+1,1)}
 \end{matrix}
 \right\}
 \right.
 .
\end{eqnarray*}

For any even integer $k$,
the isomorphism between $J_{k,1}^{(n+1)}$, which is the space of Jacobi forms of index $1$,
and $M_{k-\frac12}^+(\Gamma_0^{(n+1)}(4))$ is shown in~\cite[Theorem 1]{Ib}.
We denote this linear map by $\sigma$ which is a bijection from $J_{k,1}^{(n+1)}$ to $M_{k-\frac12}^+(\Gamma_0^{(n+1)}(4))$
as modules over the ring of Hecke operators.
The map $\sigma$ is given via
\begin{eqnarray*}
 &&
 \sum_{\begin{smallmatrix} 
        N \in Sym_n^*, \, R \in \Z^{(n,1)}\\
        4N - R{^t R} \geq 0
       \end{smallmatrix}}
       C(N,R)\, e(N\tau + R{^t z}) \\
 &\mapsto& 
 \sum_{\begin{smallmatrix}
         R\!\!\! \mod  (2\Z)^{(n,1)} \\
         R \in \Z^{(n,1)}
       \end{smallmatrix}}
 \sum_{\begin{smallmatrix}
         N \in Sym_n^* \\
         4N - R{^t R} \geq 0
       \end{smallmatrix}}
    C(N,R)\, e( (4N - R {^t R}) \tau).
\end{eqnarray*}

The $\C$-vector subspace $M_{k-\frac12}^*(\Gamma_0^{(n+1)}(4))$ of $M_{k-\frac12}^+(\Gamma_0^{(n+1)}(4))$ denotes
the image of $J_{k,1}^{(n+1)*}$ by the map $\sigma$, where $J_{k,1}^{(n+1)*}$ was defined in \S\ref{ss:fj_expansion}.

Let $E_{k,1}^{(n+1)}$ be the first Fourier-Jacobi coefficient of Siegel-Eisenstein series $E_k^{(n+2)}$.
It is known that $E_{k,1}^{(n+1)}$ coincides the Jacobi-Eisenstein series of weight $k$ of index $1$ of degree $n+1$
(cf. \cite[Satz 7]{Bo}.)
We define the \textit{generalized Cohen-Eisenstein series} $\mathcal{H}_{k-\frac12}^{(n+1)}$ of weight $k-\frac12$ of degree $n+1$ by
\begin{eqnarray*}
 \mathcal{H}_{k-\frac12}^{(n+1)}
 &:=&
 \sigma(E_{k,1}^{(n+1)}).
\end{eqnarray*}
Because $E_{k,1}^{(n+1)} \in J_{k,1}^{(n+1) * }$, we have $\mathcal{H}_{k-\frac12}^{(n+1)} \in M_{k-\frac12}^*(\Gamma_0^{(n+1)}(4))$.

For any integer $m$ we denote by $\widetilde{\mbox{FJ}}_m$ the linear map from 
$M_{k-\frac12}(\Gamma_0^{(n+1)}(4))$ to $J_{k-\frac12,m}^{(n)}$
obtained by the Fourier-Jacobi expansion with respect to the index $m$.

We denote by $J_{k-\frac12,m}^{(n)*}$ the image of
$M_{k-\frac12}^*(\Gamma_0^{(n+1)}(4))$
through the map $\widetilde{\mbox{FJ}}_m$.

We recall that the form $e_{k,m}^{(n)}$ was defined as the $m$-th Fourier-Jacobi coefficient of
the generalized Cohen-Eisenstein series $\mathcal{H}_{k-\frac12}^{(n+1)}$ (cf. \S\ref{s:introduction}).
Thus $e_{k,m}^{(n)} \in J_{k-\frac12,m}^{(n)*}$.

\subsection{The map $\iota_{\mathcal{M}}$ }\label{ss:iota}
We recall
$\mathcal{M} = \left(\begin{smallmatrix} l & \frac{r}{2} \\ \frac{r}{2} & 1 \end{smallmatrix}\right) \in \mbox{Sym}_2^+$.
In this subsection we shall introduce a map
\begin{eqnarray*}
\iota_{\mathcal{M}} : H_{\mathcal{M}}^{(n)} \rightarrow \mbox{Hol}(\H_n\times \C^{(n,1)} \rightarrow \C),
\end{eqnarray*}
where $H_{\mathcal{M}}^{(n)}$ is a certain subspace of holomorphic functions on $\mathfrak{H}_n\times \C^{(n,2)}$,
which will be defined below,
and where $\mbox{Hol}(\H_n\times \C^{(n,1)} \rightarrow \C)$ denotes the space of all holomorphic functions
on $\H_n\times \C^{(n,1)}$.
We will show that
the restriction of $\iota_{\mathcal{M}}$ gives a linear isomorphism
between $J_{k,\mathcal{M}}^{(n)*}$ and  $J_{k-\frac12,m}^{(n)*}$ (cf. Lemma~\ref{lemma:iota}.)

Let $\phi$ be a holomorphic function on $\H_n \times \C^{(n,2)}$.
We assume that $\phi$ has a Fourier expansion
\begin{eqnarray*}
 \phi(\tau,z) 
 &=& 
 \sum_{\begin{smallmatrix} N \in Sym_n^* , R \in \Z^{(n,1)}\\
                          4 N - R \mathcal{M}^{-1} {^tR} \geq 0 \end{smallmatrix}}A(N,R)\, e(N\tau + ^t R z)
\end{eqnarray*}
for $(\tau,z) \in \H_n \times \C^{(n,2)}$,
and assume that 
$\phi$ satisfies the following condition on the Fourier coefficients:
if 
\begin{eqnarray*}
 \begin{pmatrix}
  N & \frac12 R \\
  \frac12 ^t R & \mathcal{M}
 \end{pmatrix}
 &=&
 \begin{pmatrix}
  N' & \frac12 R' \\
  \frac12 ^t R' & \mathcal{M}
 \end{pmatrix}
 \left[\begin{pmatrix}
  1_n &  \\
  ^tT   & 1_2
 \end{pmatrix}\right]
\end{eqnarray*}
with some $T = \begin{pmatrix} 0 , \lambda \end{pmatrix} \in \Z^{(n,2)}$,
$\lambda \in \Z^{(n,1)}$,
then $A(N,R) = A(N',R')$.

The symbol $H_{\mathcal{M}}^{(n)}$ denotes the $\C$-vector space consists of
all holomorphic functions which satisfy the above condition.

We remark $J_{k,\mathcal{M}}^{(n)*} \subset J_{k,\mathcal{M}}^{(n)} \subset H_{\mathcal{M}}^{(n)}$ for any even integer $k$.

Now we shall define a map $\iota_{\mathcal{M}}$.
For $\phi(\tau',z') = \sum A(N,R) e(N\tau' + R {^t z'}) \in H_{\mathcal{M}}^{(n)}$
we define a holomorphic function $\iota_{\mathcal{M}}(\phi)$
on $\H_n \times \C^{(n,1)}$ by
\begin{eqnarray*}
 \iota_{\mathcal{M}}(\phi)(\tau,z)
 &:=&
 \sum_{\begin{smallmatrix}
        M \in Sym_n^*,\ S \in \Z^{(n,1)} \\
        4 M m - S {^t S} \geq 0 
       \end{smallmatrix}}
       C(M,S) e(M\tau + S {^t z}),
\end{eqnarray*}
for $(\tau,z)\in \H_n \times \C^{(n,1)}$,
where we define $C(M,S) := A(N,R)$ if there exists matrices $N \in \mbox{Sym}_2^*$ and $R = (R_1,R_2) \in \Z^{(n,2)}$
$(R_1,R_2 \in \Z^{(n,1)})$
which satisfy 
\begin{eqnarray*}
 \begin{pmatrix}
  M & \frac12 S \\ \frac12 ^t S & \det(2\mathcal{M})
 \end{pmatrix}
 &=&
 4 \begin{pmatrix}
    N & \frac12 R_1 \\ \frac12 ^t R_1 & l
   \end{pmatrix}
 - \begin{pmatrix}
    R_2 \\ r 
   \end{pmatrix}
   \begin{pmatrix}
     ^t R_2 , r
    \end{pmatrix} ,
\end{eqnarray*}
$C(M,S) := 0 $ otherwise.
We remark that the coefficient $C(M,S)$ does not depend on the choice of the matrices $N$ and $R$,
because if
\begin{eqnarray*}
 4 \begin{pmatrix}
    N & \frac12 R_1 \\ \frac12 ^t R_1 & l
   \end{pmatrix}
 - \begin{pmatrix}
    R_2 \\ r 
   \end{pmatrix}
   \begin{pmatrix}
     ^t R_2 , r
    \end{pmatrix} 
&=&
 4 \begin{pmatrix}
    N' & \frac12 R'_1 \\ \frac12 ^t R'_1 & l
   \end{pmatrix}
 - \begin{pmatrix}
    R'_2 \\ r 
   \end{pmatrix}
   \begin{pmatrix}
     ^t R'_2 , r
    \end{pmatrix} ,
\end{eqnarray*}
then $4N - R_2 {^t R_2} = 4N' - {R'_2} ^t {R'_2}$. Hence $R_2 {^t R_2} \equiv  {R'_2} ^t {R'_2} \!\! \mod 4$.
Thus there exists a matrix $\lambda \in \Z^{(n,1)}$ such that ${R'_2} = R_2 + 2 \lambda$.
Therefore, by straightforward calculation we have
\begin{eqnarray*}
 \begin{pmatrix}
  N & \frac12 R \\
  \frac12 ^t R & \mathcal{M}
 \end{pmatrix}
 &=&
 \begin{pmatrix}
  N' & \frac12 R' \\
  \frac12 ^t R' & \mathcal{M}
 \end{pmatrix}
 \left[
 \begin{pmatrix}
 1_n & 0 \\
  ^t T & 1_2 
 \end{pmatrix}
 \right]
\end{eqnarray*}
with $T = (0, \lambda)$, $R = (R_1, R_2)$ and $R' = ({R'}_1, {R'}_2)$.
Because $\phi$ belongs to $H_{\mathcal{M}}^{(n)}$, the above definition of $C(M,S)$ is well-defined.

Now, we restrict the above map $\iota_{\mathcal{M}}$ to the subspace $J_{k,1}^{(n+1)*} \subset J_{k,1}^{(n+1)}$,
then we obtain the following lemma.

\begin{lemma}\label{lemma:iota}
Let $k$ be an even integer. We put $m = \det(2\mathcal{M})$.
Then we have the following commutative diagram:
$$
\begin{CD}
 J_{k,1}^{(n+1)*} @>\sigma|_{J_{k,1}^{(n+1)*}} >> M_{k-\frac12}^*(\Gamma_0^{(n+1)}(4)) \\
 @V{\mbox{FJ}_{1,\mathcal{M}}|_{J_{k,1}^{(n)*}}}VV @VV{\widetilde{\mbox{FJ}}_m|_{M_{k-\frac12}^*(\Gamma_0^{(n)}(4))}}V  \\
 J_{k,\mathcal{M}}^{(n)*} @>\iota_{\mathcal{M}}|_{J_{k,\mathcal{M}}^{(n)*}} >> J_{k-\frac12,m}^{(n)*} .
\end{CD}
$$
Moreover, the restriction of the linear map $\iota_{\mathcal{M}}$ on $J_{k,\mathcal{M}}^{(n)*}$ is
a bijection between $J_{k,\mathcal{M}}^{(n)*}$ and $J_{k-\frac12,m}^{(n)*}$.
\end{lemma}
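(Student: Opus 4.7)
The plan is to check the commutativity of the diagram by a direct comparison of Fourier coefficients, and then to deduce the bijection formally from the commutativity together with the defining surjections of the starred subspaces.

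First I would take $\phi \in J_{k,1}^{(n+1)*}$ with Fourier expansion $\phi(\tau,z) = \sum_{N,R} A(N,R)\, e(N\tau + R\,{}^tz)$ (over $N \in \mathrm{Sym}_{n+1}^*$, $R \in \Z^{(n+1,1)}$), split $N = \smat{N'}{R_1/2}{^tR_1/2}{l}$ and $R = \binom{R_2}{r}$, and record the block identity
$$4N - R\,{}^tR \;=\; \begin{pmatrix} 4N' - R_2\,{}^tR_2 & 2R_1 - rR_2 \\ 2\,{}^tR_1 - r\,{}^tR_2 & 4l - r^2 \end{pmatrix}.$$
The $(N',R_1,R_2)$-th Fourier coefficient of $\mathrm{FJ}_{1,\mathcal{M}}(\phi)$ is simply $A(N,R)$, and the right-hand side of this identity is precisely the relation defining $\iota_{\mathcal{M}}$. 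Using the explicit formula for $\sigma$ and then extracting the $m$-th Fourier-Jacobi coefficient, the Fourier coefficient of $\widetilde{\mathrm{FJ}}_m(\sigma(\phi))$ at $(M,S)$ comes out \emph{a priori} as a sum $\sum_{R} A((T+R\,{}^tR)/4,\, R)$ with $T = \smat{M}{S/2}{^tS/2}{m}$, taken over those $R \in \Z^{n+1}/2\Z^{n+1}$ with $T+R\,{}^tR \in 4\,\mathrm{Sym}_{n+1}^*$. The point I would emphasise is that the diagonal constraints $M_{ii} + R_{2,i}^2 \in 4\Z$ and $m + r^2 \in 4\Z$ pin down $R \bmod 2$ uniquely (because integer squares take only the values $0$ and $1$ modulo $4$); the sum therefore collapses to a single term, matching $(\iota_{\mathcal{M}} \circ \mathrm{FJ}_{1,\mathcal{M}})(\phi)(M,S)$ on the nose.

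With the square commutative, surjectivity of $\iota_{\mathcal{M}}|_{J_{k,\mathcal{M}}^{(n)*}}$ is immediate: any $\psi \in J_{k-\frac12,m}^{(n)*}$ equals $\widetilde{\mathrm{FJ}}_m(\sigma(\phi))$ for some $\phi \in J_{k,1}^{(n+1)*}$, and by commutativity this is $\iota_{\mathcal{M}}(\mathrm{FJ}_{1,\mathcal{M}}(\phi))$ with $\mathrm{FJ}_{1,\mathcal{M}}(\phi) \in J_{k,\mathcal{M}}^{(n)*}$. For injectivity I would work on the ambient space $H_{\mathcal{M}}^{(n)}$ itself: every triple $(N',R_1,R_2)$ yields a valid target $(M,S)$, and the computation already sketched in the excerpt (any two triples mapping to the same $(M,S)$ are $T=(0,\lambda)$-translates of one another) identifies the fibres of $(N',R_1,R_2)\mapsto(M,S)$ with precisely the orbits on which the $H_{\mathcal{M}}^{(n)}$ condition forces the Fourier coefficients to coincide. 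Hence $\iota_{\mathcal{M}}(\phi)$ recovers every Fourier coefficient of $\phi$, so $\iota_{\mathcal{M}}$ is in fact injective on all of $H_{\mathcal{M}}^{(n)}$, and a fortiori on $J_{k,\mathcal{M}}^{(n)*}$. The only genuinely non-formal step is the diagonal uniqueness of $R \bmod 2$ in the $\sigma$-sum; once that is in hand, the rest is bookkeeping with the block decomposition and a formal diagram chase.
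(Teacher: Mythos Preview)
Your proof is correct and follows essentially the same approach as the paper. The paper's own proof is extremely terse (``it is not difficult to see'' for commutativity, ``follows directly from the definition'' for injectivity), and what you have written is precisely the unpacking of these phrases: the block identity for $4N-R\,{}^tR$, the collapse of the $\sigma$-sum to a single term via the diagonal congruences mod~$4$, and the observation that the fibres of $(N',R_1,R_2)\mapsto(M,S)$ are exactly the $(0,\lambda)$-orbits already absorbed into the definition of $H_{\mathcal{M}}^{(n)}$.
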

\begin{proof}
Let $\psi \in J_{k,1}^{(n+1)*}$.
Due to the definition of $\sigma$ (cf. \S\ref{ss:fj_expansion_half}) and $\iota_{\mathcal{M}}$,
it is not difficult to see $\iota_{M}(FJ_{1,\mathcal{M}}(\psi)) = \widetilde{\mbox{FJ}}_m (\sigma(\psi))$.
Namely, we have the above commutative diagram.

Because the map
$\widetilde{\mbox{FJ}}_m|_{M_{k-\frac12}^*(\Gamma_0^{(n+1)}(4))} : M_{k-\frac12}^*(\Gamma_0^{(n+1)}(4)) \rightarrow J_{k-\frac12,m}^{(n)*}$
is surjective
and because $\sigma$ is an isomorphism,
the map
$\iota_{\mathcal{M}}|_{J_{k,\mathcal{M}}^{(n)*}} : J_{k,\mathcal{M}}^{(n)*} \rightarrow J_{k-\frac12,m}^{(n)*}$ is surjective.
The injectivity of the map
$\iota_{\mathcal{M}}|_{J_{k,\mathcal{M}}^{(n)*}} : J_{k,\mathcal{M}}^{(n)*} \rightarrow J_{k-\frac12,m}^{(n)*}$
follows directly from the definition of the map $\iota_{\mathcal{M}}$.
\end{proof}

\subsection{Compatibility between index-shift maps and $\iota_{\mathcal{M}}$}
In this subsection we shall show compatibility between some index-shift maps and the map $\iota_{\mathcal{M}}$.

For function $\psi$ on $\H_n \times \C^{(n,2)}$ and for $L \in \Z^{(2,2)}$
we define the function $\psi|U_L$ on $\H_n\times\C^{(n,2)}$ by
\begin{eqnarray*}
  (\psi|U_L)(\tau,z) &:=& \psi(\tau, z {^t L}) .
\end{eqnarray*}
For function $\phi$ on $\H_n \times \C^{(n,1)}$ and for integer $a$
we define the function $\phi|U_a$ on $\H_n \times \C^{(n,1)}$ by
\begin{eqnarray*}
   (\phi|U_a)(\tau,z) &:=& \phi(\tau,a z).
\end{eqnarray*}
\begin{prop}\label{prop:iota_U}
For any $\phi \in J_{k,\mathcal{M}}^{(n)*}$ and for any $L = \smat{a}{ }{b}{1} \in \Z^{(2,2)}$ we obtain
\begin{eqnarray*}
 \iota_{\mathcal{M}[L]}(\phi|U_L)
 &=&
 \iota_{\mathcal{M}} (\phi)|U_a.
\end{eqnarray*}
In particular, for any prime $p$ we have
$\iota_{\mathcal{M}[\smat{p}{ }{ }{1}]} \left( \phi\left| U_{\smat{p}{ }{ }{1}}\right) \right. = 
 \iota_{\mathcal{M}} (\phi)|U_p$.
\end{prop}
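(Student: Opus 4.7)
My plan is to verify the identity by comparing Fourier coefficients on the two sides, which are holomorphic functions on $\H_n \times \C^{(n,1)}$. Write $\phi(\tau,z) = \sum_{N,R_1,R_2} A(N,R_1,R_2)\, e(N\tau + {^t R_1} z_1 + {^t R_2} z_2)$ with $z = (z_1,z_2)$ and $(R_1,R_2) \in \Z^{(n,2)}$. Since $z\,{^t L} = (a z_1,\, b z_1 + z_2)$, direct substitution gives
\[
(\phi|U_L)(\tau,z) \;=\; \sum_{N,R_1,R_2} A(N,R_1,R_2)\, e\bigl(N\tau + {^t(aR_1 + bR_2)}\, z_1 + {^t R_2}\, z_2\bigr),
\]
so the Fourier coefficient $B(N,R'_1,R'_2)$ of $\phi|U_L$ equals $A(N,(R'_1-bR'_2)/a,R'_2)$ when $R'_1 \equiv bR'_2 \pmod{a}$ and vanishes otherwise. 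A short calculation using the Heisenberg shift (noting $r'-2b = ar$, where $r' := ar+2b$) shows that $\phi|U_L$ lies in $H_{\mathcal{M}[L]}^{(n)}$, so $\iota_{\mathcal{M}[L]}(\phi|U_L)$ is well defined.

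Next I would read off the $(M',S')$-Fourier coefficient of each side. Computing $\mathcal{M}[L] = \smat{la^2+rab+b^2}{(ar+2b)/2}{(ar+2b)/2}{1}$ gives $r' = ar+2b$ and $m' := \det(2\mathcal{M}[L]) = a^2 m$. On the LHS, the defining matrix equation of $\iota_{\mathcal{M}[L]}$ forces $M' = 4N' - R'_2\,{^t R'_2}$ and $S' = 4R'_1 - 2r'R'_2$, and the coefficient equals $B(N',R'_1,R'_2)$. On the RHS, $(\iota_{\mathcal{M}}(\phi)|U_a)(\tau,z) = \iota_{\mathcal{M}}(\phi)(\tau,az)$, so the $(M',S')$-coefficient equals $C(M',S'/a)$ when $a \mid S'$ and is zero otherwise; the defining equation for $\iota_{\mathcal{M}}$ reads $M' = 4N - R_2\,{^t R_2}$ and $S'/a = 4R_1 - 2rR_2$, with $C(M',S'/a) = A(N,R_1,R_2)$.

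Taking $R_2 := R'_2$ gives $N = N'$, and the identity $S' = 4(R'_1 - bR'_2) - 2arR'_2$ yields
\[
R_1 \;=\; \frac{S'/a + 2rR'_2}{4} \;=\; \frac{R'_1 - bR'_2}{a},
\]
so the two $A$-values agree. The vanishing loci also match: from $4aR_1 = S' + 2arR'_2$ one sees that $R_1 \in \Z^{(n,1)}$ forces $a \mid S'$, while conversely, under $a \mid S'$, the condition $R_1 \in \Z^{(n,1)}$ is exactly $a \mid (R'_1 - bR'_2)$. The ``in particular'' clause is the case $b=0$, $a=p$. I expect the main obstacle to be the careful bookkeeping of these integrality and divisibility conditions, made delicate by the factor of $4$ in the definition of $\iota_{\mathcal{M}}$; the hypothesis that the $(2,2)$-entry of $L$ equals $1$ is essential so that $\mathcal{M}[L]$ remains of the form $\smat{*}{*}{*}{1}$ for which $\iota_{\mathcal{M}[L]}$ is defined.
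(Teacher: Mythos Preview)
Your proof is correct and follows essentially the same route as the paper's own argument: both proceed by comparing Fourier coefficients of the two sides, translating the matrix relation defining $\iota_{\mathcal{M}[L]}$ into the one defining $\iota_{\mathcal{M}}$ via the substitution $R = R'\,L^{-1}$ (equivalently, your $R_1 = (R'_1 - bR'_2)/a$, $R_2 = R'_2$). The paper packages this as a pair of $ (n+2)\times(n+2) $ matrix identities, while you unwind the components directly and keep explicit track of the divisibility conditions; the content is the same.
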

\begin{proof}
We put $m = \det(2 \mathcal{M})$.
Let $\phi(\tau,z') = \displaystyle{
  \!\!\!\!\!\!\!\!\!\!
  \sum_{\begin{smallmatrix}
          N \in Sym_n^*,\  R\in \Z^{(n,2)} \\
          4 N - R \mathcal{M}^{-1} {^t R} \geq 0 
  \end{smallmatrix}}
  \!\!\!\!\!\!\!\!\!\!
  A(N,R) e(N\tau + R { ^t z'})
 }$ be a Fourier expansion of $\phi$.
And let
\begin{eqnarray*}
 \iota_{\mathcal{M}}(\phi)(\tau,z) 
 &=&
 \!\!\!\!\!\!\!\!\!\!
 \sum_{\begin{smallmatrix}
            M \in Sym_n^*,\ S\in \Z^{(n,1)} 
         \\ 4 M m - S {^t S} \geq 0 
       \end{smallmatrix}}
 \!\!\!\!\!\!\!\!\!\!
 C(M,S)\, e(M\tau + S{^t z}),
\\
 \iota_{\mathcal{M}[L]}(\phi|U_L)(\tau,z)
 &=&
 \!\!\!\!\!\!\!\!\!\!
 \sum_{\begin{smallmatrix}
            M \in Sym_n^*,\ S\in \Z^{(n,1)} 
         \\ 4 M m a^2 - S {^t S} \geq 0 
       \end{smallmatrix}}
 \!\!\!\!\!\!\!\!\!\!
 C_1(M,S)\, e(M\tau + S{^t z}) 
\end{eqnarray*}
and
\begin{eqnarray*}
 (\iota_{\mathcal{M}} (\phi)|U_a)(\tau,z)
 &=&
 \!\!\!\!\!\!\!\!\!\!
 \sum_{\begin{smallmatrix}
            M \in Sym_n^*,\ S\in \Z^{(n,1)} 
         \\ 4 M m a^2 - S {^t S} \geq 0 
       \end{smallmatrix}}
 \!\!\!\!\!\!\!\!\!\!
 C_2(M,S)\, e(M\tau + S{^t z})
\end{eqnarray*}
be Fourier expansions.

We have $C_2(M,S) = C(M,a^{-1}S)$.
Moreover, we obtain
$C_1(M,S) = A(N,R L^{-1})$ with some $N \in \mbox{Sym}_n^*$ and $R \in \Z^{(n,2)}$ such that
\begin{eqnarray*}
 \begin{pmatrix}
  M & \frac12 S \\
 \frac12 {^t S} & m a^2
 \end{pmatrix}
 &=&
 4 \begin{pmatrix}
  N & \frac12 R\\
 \frac12 {^t R} & \mathcal{M}[L]
 \end{pmatrix}
 \left[
 \begin{pmatrix}
  1_n & \begin{matrix} 0 \\ \vdots \\ 0 \end{matrix}\\
  \begin{matrix} 0 \cdots 0 \end{matrix} & 1 \\
  -\frac12 {^t (R \left(\begin{smallmatrix}0\\ 1 \end{smallmatrix}\right))} & -\frac12 r a - b
 \end{pmatrix}
 \right] .
\end{eqnarray*}
With the above matrices $N$, $R$, $M$ and $S$ we have
\begin{eqnarray*}
 \begin{pmatrix}
  M & \frac{1}{2} a^{-1} S \\
 \frac{1}{2}a^{-1}{^t S} & m
 \end{pmatrix}
 &=&
 4 \begin{pmatrix}
  N & \frac12 R\\
 \frac12 {^t R} & \mathcal{M}[L]
 \end{pmatrix}
 \left[
 \begin{pmatrix}
  1_n & \begin{matrix} 0 \\ \vdots \\ 0 \end{matrix} \\
  \begin{matrix} 0 \cdots 0 \end{matrix} & 1 \\
  -\frac12 {^t (R \left(\begin{smallmatrix}0\\ 1 \end{smallmatrix}\right))} & -\frac12 r a - b
  \end{pmatrix}
 \begin{pmatrix}
  1_n & \begin{matrix} 0 \\ \vdots \\ 0 \end{matrix} \\
  \begin{matrix} 0  \cdots  0 \end{matrix} & a^{-1}
 \end{pmatrix}
 \right]
\\
 &=&
 4 \begin{pmatrix}
  N & \frac12 R  L^{-1}\\
 \frac12 {^t (R L^{-1})} & \mathcal{M}
 \end{pmatrix}
 \left[
 \begin{pmatrix}
  1_{n+1} & \begin{matrix} 0 \\ \vdots \\ 0 \end{matrix} \\
  -\frac12 {^t (R L^{-1}\left(\begin{smallmatrix}0\\ 1 \end{smallmatrix}\right))} & -\frac12 r 
  \end{pmatrix}
 \right] .
\end{eqnarray*}
Thus $C_2(M,S) = C(M,a^{-1}S) = A(N,R L^{-1}) = C_1(M,S)$.
\end{proof}

\begin{prop} \label{prop:iota_hecke}
For odd prime $p$ and for $0\leq \alpha \leq n$,
let $\tilde{V}_{\alpha,n-\alpha}(p^2)$ and $V_{\alpha,n-\alpha}(p^2)$ be index-shift maps
defined in \S\ref{ss:hecke_operators}.
Then, for any $\phi \in J_{k,\mathcal{M}}^{(n)*}$ we have
\begin{eqnarray}\label{id:iota_hecke}
 \iota_{\mathcal{M}}(\phi)| \tilde{V}_{\alpha,n-\alpha}(p^2)
 &=&
 p^{k(2n+1) - n (n+\frac72) + \frac12 \alpha }\
 \iota_{\mathcal{M}[\smat{p}{ }{ }{1}]}(\phi| V_{\alpha,n-\alpha}(p^2)).
\end{eqnarray}
\end{prop}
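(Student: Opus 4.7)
The strategy is to compare Fourier coefficients of the two sides. By Lemma~\ref{lemma:iota} each side lies in the integer-index plus-space $J_{k-\frac{1}{2},\,p^{2}m}^{(n)*}$, where $m=\det(2\mathcal{M})$, so it suffices to check equality of Fourier coefficients. By the very definition of $\iota$ in \S\ref{ss:iota}, a Fourier coefficient $C(M,S)$ of either side at $(M,S)$ coincides with a Fourier coefficient $A(N,R)$ of the underlying matrix-index Jacobi form at any $(N,R)$ related to $(M,S)$ by the matrix identity from that subsection; hence the identity to be proved reduces to an equality of two explicit linear combinations of Fourier coefficients $A(N,R)$ of $\phi$.

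I would first fix parallel double-coset representatives for the two operators. For $V_{\alpha,n-\alpha}(p^{2})$ one takes the standard upper-triangular representatives $M\in\Gamma_{n}\backslash\Gamma_{n}X\Gamma_{n}$ with $X=\operatorname{diag}(1_{\alpha},p\,1_{n-\alpha},p^{2}1_{\alpha},p\,1_{n-\alpha})$, together with the translations $[(0,u),(0,v)]$ for $u,v\in(\Z/p\Z)^{(n,1)}$. Since $p$ is odd, the theta-multiplier attached to each such $M$ is a single well-defined square root, and one may lift $M$ uniquely to $\widetilde{M}\in\Gamma_{0}^{(n)}(4)^{*}\backslash\Gamma_{0}^{(n)}(4)^{*}Y\Gamma_{0}^{(n)}(4)^{*}$ with $Y=(X,p^{\alpha/2})$, giving a canonical set of representatives in bijection with those for $V$. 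Substituting these into the definitions expresses $\iota_{\mathcal{M}}(\phi)|\widetilde{V}_{\alpha,n-\alpha}(p^{2})$ and $\iota_{\mathcal{M}[\smat{p}{}{}{1}]}(\phi|V_{\alpha,n-\alpha}(p^{2}))$ as explicit sums of Fourier coefficients of $\phi$.

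The key combinatorial point is to identify the two sums term by term through the dictionary of $\iota$. The sum over $u,v\in(\Z/p\Z)^{(n,1)}$ in the definition of $V_{\alpha,n-\alpha}(p^{2})$ matches, under the identity
\[
\begin{pmatrix} M & \tfrac12 S\\ \tfrac12\,{^{t}S} & p^{2}m\end{pmatrix}
=4\begin{pmatrix} N & \tfrac12 R_{1}\\ \tfrac12\,{^{t}R_{1}} & l\end{pmatrix}
-\begin{pmatrix} R_{2}\\ r\end{pmatrix}\bigl({^{t}R_{2}},\,r\bigr),
\]
exactly the residual freedom modulo $p$ in the $R_{2}$-column of $(N,R)$ once $(M,S)$ is prescribed; the calculation for the simpler operator $U_{\smat{p}{}{}{1}}$ in Proposition~\ref{prop:iota_U} is the prototype. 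What remains is to account for the scalar $p^{k(2n+1)-n(n+7/2)+\alpha/2}$, which assembles from three sources: the normalization $p^{n(2k-1)/2-n(n+1)}$ built into $\widetilde{V}(Y)$, the metaplectic contribution $p^{\alpha(2k-1)/2}$ arising from $\varphi(\tau)^{2k-1}$ in the half-integral factor of automorphy $J_{k-\frac12,m}$, and the transformation of $\det(C\tau+D)^{k}$ under $X$ on the $V$-side.

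The main obstacle is the bookkeeping required to show both that the indexing sets of the two Fourier-coefficient sums are matched term by term under the $\iota$-correspondence and that the three sources of $p$-powers combine to give exactly the claimed exponent $k(2n+1)-n(n+7/2)+\alpha/2$. A practical simplification is to verify the identity first on a $p$-primitive Fourier coefficient, for which only one representative contributes to each sum, and then to propagate the same calculation to arbitrary indices with only additional explicit $p$-divisibility factors.
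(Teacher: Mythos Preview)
Your overall strategy---compare Fourier coefficients using parallel upper-triangular representatives for the two double cosets---is exactly what the paper does. But there is a genuine gap in your sketch at the point where the two sides must actually match.

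The crucial mechanism you do not identify is this: the metaplectic lift of a representative $g\in\mathfrak{S}_\alpha$ (in Zhuravlev's parametrization by triples $(i,j,b_0)$ with $b_0$ having a rank-$(j-i-n+\alpha)$ block $a_1$) is \emph{not} simply $(g,p^{\alpha/2})$ but rather $(g,\varepsilon(g)\,p^{(n-i-j)/2})$, where $\varepsilon(g)$ is the Legendre-symbol factor $\bigl(\frac{-4}{p}\bigr)^{\operatorname{rank}_p(a_1)/2}\bigl(\frac{\det a_1'}{p}\bigr)$. On the matrix-index side, the Heisenberg sum over $u$ (your ``residual freedom in $R_2$'') is not merely a change of variable: it is a genuine quadratic Gauss sum
\[
\sum_{\lambda_2\in(\Z/p\Z)^{(n,1)}} e\!\left(\tfrac{1}{4p^2}(\hat R_2-2\lambda_2)\,{^t(\hat R_2-2\lambda_2)}\,{^t D}B\right)
= p^{\,n+(n-i-j-\alpha)/2}\,\varepsilon(g),
\]
and the identity holds precisely because the same $\varepsilon(g)$ appears on both sides. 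Your claim that the metaplectic contribution is a single $p^{\alpha(2k-1)/2}$ is therefore incorrect: the half-integral factor contributes $(\varepsilon(g)\,p^{(n-i-j)/2})^{-(2k-1)}$, varying with $g$, and the $\alpha/2$ in the final exponent comes from the Gauss sum, not from the theta multiplier.

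Your proposed shortcut of checking only a $p$-primitive coefficient does not avoid this: even there, several representatives can contribute and the cancellation of the $\varepsilon(g)$'s is exactly the point. Once you compute the Gauss sum above and quote Zhuravlev's description of the metaplectic representatives, the $p$-power bookkeeping becomes a straightforward (if tedious) comparison of
\[
A_2(\hat N,\hat R)=p^{-k+2n}\sum_{g}p^{-k(2n-i-j)+(n-i-j-\alpha)/2}\varepsilon(g)\cdots
\quad\text{and}\quad
C_2(\hat M,\hat S)=\sum_{g}p^{-n(n+1)+(k-\frac12)(i+j)}\varepsilon(g)\cdots,
\]
yielding the stated exponent $k(2n+1)-n(n+\tfrac72)+\tfrac12\alpha$.
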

\begin{proof}
We compare the Fourier coefficients of the both sides of (\ref{id:iota_hecke}).
Let 
\begin{eqnarray*}
 \phi(\tau,z') 
 &=& 
 \sum_{N,R}A_1(N,R) e(N\tau + R{^t z'}),
\\
 (\phi|V_{\alpha,n-\alpha}(p^2))(\tau,z')
 &=&
 \sum_{N,R}A_2(N,R) e(N\tau + R{^t z'}),
\\
 (\iota_{\mathcal{M}}(\phi))(\tau,z)
 &=&
 \sum_{M,S}C_1(M,S) e(M\tau + S{^t z})
\end{eqnarray*}
and
\begin{eqnarray*}
 (\iota_{\mathcal{M}}(\phi)|\tilde{V}_{\alpha,n-\alpha}(p^2))(\tau,z)
 &=&
 \sum_{M,S}C_2(M,S) e(M\tau + S{^t z})
\end{eqnarray*}
be Fourier expansions,
where $\tau \in \H_n$, $z' \in \C^{(n,2)}$ and $z \in \C^{(n,1)}$.
For the sake of simplicity
we put $U = \smat{p^2}{ }{ }{p}$.
Then
\begin{eqnarray*}
 &&
 \phi| V_{\alpha,n-\alpha}(p^2) \\
 &=&
 \sum_{\smat{p^2 {^t D}^{-1}}{B}{0_n}{D}}
 \sum_{\lambda_2,\mu_2 \in (\Z/p\Z)^{(n,1)}} 
\\
 &&
 \times \phi|_{k,\mathcal{M}}
  \left(\smat{p^2 {^t D}^{-1}}{B}{0_n}{D} \times \smat{U}{ }{ }{p^2 U^{-1}}, [((0,\lambda_2),(0,\mu_2)),0_2]\right)
\\
 &=&
 \sum_{\smat{p^2 {^t D}^{-1}}{B}{0_n}{D}}
 \sum_{\lambda_2,\mu_2 \in (\Z/p\Z)^{(n,1)}} 
 \sum_{N,R}
  A(N,R)
\\
 &&
 \times e(N\tau + {R ^t z})|_{k,\mathcal{M}}
  \left(\smat{p^2 {^t D}^{-1}}{B}{0_n}{D} \times \smat{U}{ }{ }{p^2 U^{-1}}, [((0,\lambda_2),(0,\mu_2)),0_2]\right),
\end{eqnarray*}
where, in the above summations, $\smat{p^2 {^t D}^{-1}}{B}{0_n}{D}$ run over a set of all representatives
of $\Gamma_n \backslash \Gamma_n \mbox{diag}(1_{\alpha},p 1_{n-\alpha},p^2 1_{\alpha}, p 1_{n-\alpha})\Gamma_n$,
and where the slash operator $|_{k,\mathcal{M}}$ is defined in \S\ref{ss:factors_automorphy}.

We put $\lambda = (0,\lambda_2)$, $\mu = (0, \mu_2)$ $\in \Z^{(n,2)}$, then
we obtain
\begin{eqnarray*}
 &&
 e(N\tau + R{^t z})
 |_{k,\mathcal{M}} \left(\smat{p^2 {^t D}^{-1}}{B}{0_n}{D}  \times \smat{U}{ }{ }{p^2 U^{-1}},
                   [(\lambda,\mu),0_2] \right) 
\\
 &=&
 p^{-k}\det(D)^{-k} e(\hat{N} \tau + \hat{R} {^t z} + NBD^{-1} + R U {^t \mu} D^{-1}),
\end{eqnarray*}
where
\begin{eqnarray*}
 \hat{N} &=& p^2 D^{-1}N{^t D}^{-1} + D^{-1}R U {^t \lambda} +  \frac{1}{p^2}\lambda U \mathcal{M} U {^t \lambda}
\end{eqnarray*}
and
\begin{eqnarray*}
 \hat{R} &=& D^{-1} R U + \frac{2}{p^2} \lambda U \mathcal{M} U .
\end{eqnarray*}
Thus
\begin{eqnarray*}
 N
 &=& 
 \frac{1}{p^2} D\left(\left(\hat{N} - \frac{1}{4}\hat{R}_2 {^t \hat{R}_2}\right) 
   + \frac{1}{4}(\hat{R}_2 - 2 \lambda_2) {^t(\hat{R}_2 - 2 \lambda_2)}\right){^t D}
\end{eqnarray*}
and
\begin{eqnarray*}
 R &=& D\left(\hat{R} - \frac{2}{p^2} \lambda U \mathcal{M} U \right) U^{-1},
\end{eqnarray*}
where $\hat{R}_2 = \hat{R}\left(\begin{smallmatrix}0 \\ 1 \end{smallmatrix}\right)$.

Hence, for any $\hat{N} \in \mbox{Sym}_n^*$ and for any $\hat{R} \in \Z^{(n,2)}$ we have
\begin{eqnarray*}
&&
 A_2(\hat{N},\hat{R})
\\
 &=&
 p^{-k} \!\!\!\!\!\!\! \sum_{\smat{p^2 {^t D}^{-1}}{B}{0_n}{D}}
   \!\!\!\!\!\!\!\!
   \det(D)^{-k}
   \!\!\!\!\!\!\!\!
   \sum_{\lambda_2 \in (\Z/p\Z)^{(n,1)}}\sum_{\mu_2 \in (\Z/p\Z)^{(n,1)}}
   \!\!\!\!\!\!\!
    A_1(N,R)\, e(NBD^{-1} + RU{^t(0, \mu_2)}D^{-1})
\\
 &=&
 p^{-k+n} \sum_{\smat{p^2 {^t D}^{-1}}{B}{0_n}{D}} 
   \det(D)^{-k} \sum_{\lambda_2 \in (\Z/p\Z)^{(n,1)}}
   A_1(N,R)\, e(NBD^{-1}),
\end{eqnarray*}
where $N$ and $R$ are the same symbols as the above, which are determined by $\hat{N}$, $\hat{R}$ and $\lambda_2$,
and where, in the above summations, $\smat{p^2 {^t D}^{-1}}{B}{0_n}{D}$ runs over a complete set of representatives 
of $\Gamma_n \backslash \Gamma_n \mbox{diag}(1_{\alpha},p 1_{n-\alpha},p^2 1_{\alpha}, p 1_{n-\alpha}) \Gamma_n$.
We remark that $A_1(N,R) = 0$ unless $N \in \mbox{Sym}_n^*$ and $R \in \Z^{(n,2)}$.

Due to the definition of $\iota_{\mathcal{M}}$,
for $N \in \mbox{Sym}_n^*$ and $R \in \Z^{(n,2)}$ 
we have the identity 
\begin{eqnarray*}
 A_1(N,R) 
 &=&
 C_1(4N-R\left(\begin{smallmatrix}0\\1\end{smallmatrix}\right)^t(R\left(\begin{smallmatrix}0\\1\end{smallmatrix}\right)),
    4 R\left(\begin{smallmatrix}1\\0\end{smallmatrix}\right) - 2 r R\left(\begin{smallmatrix}0\\1\end{smallmatrix}\right)).
\end{eqnarray*}
Here
\begin{eqnarray*}
 4N - R\left(\begin{smallmatrix}0\\1\end{smallmatrix}\right) {^t(R\left(\begin{smallmatrix}0\\1\end{smallmatrix}\right)) }
 &=&
 \frac{1}{p^2} D \left( 4\hat{N} - \hat{R}_2 {^t \hat{R}_2} \right) {^t D}
\end{eqnarray*}
and
\begin{eqnarray*}
 4 R\left(\begin{smallmatrix}1\\0\end{smallmatrix}\right) - 2 r R\left(\begin{smallmatrix}0\\1\end{smallmatrix}\right)
 &=&
 \frac{1}{p^2}D(4 \hat{R}\left(\begin{smallmatrix}1\\0\end{smallmatrix}\right) - 2 r p \hat{R}_2).
\end{eqnarray*}
Hence we have
\begin{eqnarray}\label{id:A2}
&&
 A_2(\hat{N},\hat{R})
\\
 &=&
 p^{-k+n} \!\!\! \sum_{\smat{p^2 {^t D}^{-1}}{B}{0_n}{D}} \!\!\! \det(D)^{-k} 
 C_1(\frac{1}{p^2} D \left( 4\hat{N} - \hat{R}_2 {^t \hat{R}_2} \right) {^t D},
     \frac{1}{p^2}D(4 \hat{R}\left(\begin{smallmatrix}1\\0\end{smallmatrix}\right) - 2 r p \hat{R}_2))
\notag \\
 && \times
  e(\frac{1}{p^2} \left(\hat{N} - \frac{1}{4}\hat{R}_2 {^t \hat{R}_2}\right){^t D}B)
 \sum_{\lambda_2}
  e(\frac{1}{4 p^2} 
    (\hat{R}_2 - 2 \lambda_2) {^t(\hat{R}_2 - 2 \lambda_2)}{^t D}B), \notag
\end{eqnarray}
where $\lambda_2$ runs over a complete set of representatives of $(\Z/p\Z)^{(n,1)}$ such that
\begin{eqnarray*}
 D \left(\hat{R} - \frac{2}{p^2}(0, \lambda_2) U \mathcal{M} U\right) U^{-1} \in \Z^{(n,2)}.
\end{eqnarray*}

Let $\mathfrak{S}_{\alpha}$ be a complete set of representative of
$\Gamma_n \backslash
  \Gamma_n \left(\begin{smallmatrix}
            1_{\alpha}&&&\\
            &p 1_{n-\alpha}&&\\
            &&p^2 1_{\alpha}&\\
            &&&p 1_{n-\alpha}
           \end{smallmatrix}\right) \Gamma_n$
Now we quote a complete set of representatives $\mathfrak{S}_{\alpha}$
from \cite{Zhu:euler}.
We put
\[
 \delta_{i,j} := \mbox{diag}(1_i,p 1_{j-i},p^2 1_{n-j})
\]
and
\begin{eqnarray*}
 \mathfrak{S}_{\alpha}
 &:=&
 \left.
 \left\{
  \begin{pmatrix}p^2{\delta_{i,j}}^{-1}&b_0\\0_n & \delta_{i,j} \end{pmatrix}
  \begin{pmatrix}{^t u}^{-1} & 0_n \\ 0_n & u \end{pmatrix}
  \, \right| \,
  i,j , b_0, u
 \right\},
\end{eqnarray*}
where, in the above set, $i$ and $j$ run over all non-negative integers such that $j-i-n+\alpha \geq 0$,
and where $u$ runs over a complete set of representatives of
$(\delta_{i,j}^{-1}\mbox{GL}_n(\Z) \delta_{i,j} \cap \mbox{GL}_n(\Z)) \backslash \mbox{GL}_n(\Z)$,
and $b_0$ runs over all matrices in the set
\begin{eqnarray*}
 \mathfrak{T}
 &:=&
 \left. \left\{
  \begin{pmatrix}
   0_i & 0 & 0 \\
   0 & a_1 & p b_1 \\
   0 & ^t b_1 & b_2 
  \end{pmatrix}
  \right|
  \begin{matrix}
  b_1 \in (\Z/p\Z)^{(j-i,n-j)},
  b_2 = {^t b_2} \in (\Z/p^2 \Z)^{(n-j,n-j)},\\
  a_1 = {^t a_1} \in (\Z/p\Z)^{(j-i,j-i)},
  \mbox{rank}_p(a_1) = j-i-n+\alpha
  \end{matrix}
    \right\} .
\end{eqnarray*}

For a matrix
 $g = \begin{pmatrix}p^2 {^t D}^{-1}&B \\ 0_n& D \end{pmatrix}
  =
  \begin{pmatrix}p^2{\delta_{i,j}}^{-1}&b_0\\0_n & \delta_{i,j} \end{pmatrix}
  \begin{pmatrix}{^t u}^{-1} & 0_n \\ 0_n & u \end{pmatrix}$
  $\in$ $\mathfrak{S}_{\alpha}$ 
with a matrix
$b_0 =  \begin{pmatrix}
   0_i & 0 & 0 \\
   0 & a_1 & p b_1 \\
   0 & ^t b_1 & b_2 
  \end{pmatrix} \in \mathfrak{T}$,
we define $\varepsilon(g) := \left(\frac{-4}{p}\right)^{rank_p(a_1)/2}\left(\frac{\det a_1'}{p}\right)$,
where $a_1' \in \mbox{GL}_{j-i-n+\alpha}(\Z/p\Z)$ is a matrix,
such that
$a_1 \equiv \smat{a_1'}{0}{0}{0_{n-\alpha}}[v] \mod p$ with some $v \in \mbox{GL}_{j-i}(\Z)$.
Under the assumption
\begin{eqnarray*}
\frac{1}{p^2}D(4 \hat{R}\left(\begin{smallmatrix}1\\0\end{smallmatrix}\right) - 2 r p \hat{R}_2) \in \Z^{(n,2)}
\end{eqnarray*}
the condition $D (\hat{R} - p^{-2}(0, \lambda_2) U \mathcal{M} U) U^{-1} \in \Z^{(n,2)}$
is equivalent to the condition
\begin{eqnarray*}
 u(\hat{R}_2 - 2 \lambda_2) \in \smat{p 1_i}{0}{0}{1_{n-i}} \Z^{(n,1)}.
\end{eqnarray*}
Hence the last summation in~(\ref{id:A2}) is
\begin{eqnarray*}
 &&
 \sum_{\lambda_2}
  e(\frac{1}{4 p^2} 
    (\hat{R}_2 - 2 \lambda_2) {^t(\hat{R}_2 - 2 \lambda_2)}{^t D}B)
\\
 &=&
 p^{n-j} \sum_{\lambda' \in (\Z/p\Z)^{(j-i,1)}} e\left(\frac{1}{p} {^t \lambda'} a_1 \lambda'\right)
\\
 &=&
 p^{n - i - rank_p(a_1)} \left(\left(\frac{-4}{p}\right) p\right)^{rank_p(a_1)/2} \left(\frac{\det a_1'}{p}\right)
\\
 &=&
 p^{n - i -\frac{rank_p(a_1)}{2}} \varepsilon(g)
\\
 &=&
 p^{n + (n-i-j-\alpha)/2} \varepsilon(g).
\end{eqnarray*}
Thus (\ref{id:A2}) is
\begin{eqnarray*}
 A_2(\hat{N},\hat{R})
 &=&
 p^{-k+2n} \sum_{g } p^{-k(2n-i-j) + (n-i-j-\alpha)/2} \varepsilon(g)\,
 e\! \left(p^{-2} \left(4 \hat{N} - \hat{R}_2 {^t \hat{R}_2}\right){^t D}B\right)
\\
 && \times
  C_1\left(p^{-2} D ( 4\hat{N} - \hat{R}_2 {^t \hat{R}_2} ) {^t D},\
     p^{-2}D(4 \hat{R}\left(\begin{smallmatrix}1\\0\end{smallmatrix}\right) - 2 r p \hat{R}_2)\right),
\end{eqnarray*}
where 
 $g = \begin{pmatrix}p^2 {^t D}^{-1}&B \\ 0_n& D \end{pmatrix} =
  \begin{pmatrix}p^2{\delta_{i,j}}^{-1}&b_0\\0_n & \delta_{i,j} \end{pmatrix}
  \begin{pmatrix}{^t u}^{-1} & 0_n \\ 0_n & u \end{pmatrix} $ runs over all elements in the set $\mathfrak{S}_{\alpha}$.

Now we shall express $C_2(M,S)$ as a linear combination of Fourier coefficients $C_1(M,S)$ of $\iota_{M}(\phi)$.
For $Y = (\mbox{diag}(1_{\alpha},p 1_{n-\alpha},p^2 1_{\alpha}, p 1_{n-\alpha}),p^{\alpha/2}) \in \widetilde{\mbox{GSp}_n^+(\Z)}$
a complete set of representatives of
$\Gamma_0^{(n)}(4)^* \backslash \Gamma_0^{(n)}(4)^* Y \Gamma_0^{(n)}(4)^*$ is given by elements
\begin{eqnarray*}
 \widetilde{g}
 &=&
 (g,\varepsilon(g) p^{(n-i-j)/2}) \in \widetilde{\mbox{GSp}_n^+(\Z)},
\end{eqnarray*}
where $g$ runs over all elements in the set $\mathfrak{S}_{\alpha}$,
and $\varepsilon(g)$ is defined as the above (cf. \cite[Lemma 3.2]{Zhu:euler}).
Hence
\begin{eqnarray*}
 &&
 (\iota_{\mathcal{M}}(\phi)|\tilde{V}_{\alpha,n-\alpha}(p^2))(\tau,z)
\\
 &=&
 p^{n(2k-1)/2 - n(n+1)}\sum_{M,S} \sum_{\widetilde{g}} p^{(-k+1/2)(n-i-j)} \varepsilon(g)\,
  C_1(M,S) \\
 && \qquad \qquad \qquad \qquad \qquad \qquad
  \times e(M(p^2 {^t D}^{-1}\tau + B) D^{-1} + p^2 S {^t z} D^{-1})
\\
 &=&
 p^{n(2k-1)/2 - n(n+1)}\sum_{\hat{M},\hat{S}} \sum_{g \in \mathfrak{S}_{\alpha}} p^{(-k+1/2)(n-i-j)} \varepsilon(g)\,
  C_1(p^{-2} D \hat{M}{^t D}, p^{-2} D \hat{S})
\\
 && \qquad \qquad \qquad \qquad \qquad \qquad
  \times e(\hat{M} \tau + \hat{S} {^t z} + p^{-2} \hat{M} {^t D} B).
\end{eqnarray*}
Thus 
\begin{eqnarray*}
 C_2(\hat{M},\hat{S})
 &=&
  \sum_{g} p^{-n(n+1)+(k-1/2)(i+j)} \varepsilon(g)\,
  C_1(p^{-2} D \hat{M}{^t D}, p^{-2} D \hat{S})\, e(p^{-2} \hat{M} {^t D} B).
\end{eqnarray*}

Now we put
$\hat{M} = 4 \hat{N} - \hat{R}_2 {^t \hat{R}_2 }$ and
$\hat{S} = 4 \hat{R} \left(\begin{smallmatrix}1\\0 \end{smallmatrix}\right) - 2 r p \hat{R}_2$,
then
\begin{eqnarray*}
 C_2(4 \hat{N} - \hat{R}_2 {^t \hat{R}_2 },4 \hat{R} \left(\begin{smallmatrix}1\\0 \end{smallmatrix}\right) - 2 r p \hat{R}_2)
 &=&
 p^{2nk+k - n^2 -\frac{7}{2}n  + \frac12 \alpha} A_2(\hat{N},\hat{R}).
\end{eqnarray*}
The proposition follows from this identity.
\end{proof}

\subsection{Index-shift maps at $p=2$}\label{ss:hecke_p2}
For $p=2$ we define the index-shift map $\tilde{T}_{\alpha,n-\alpha}(p^2)$ on $J_{k-\frac12,m}^{(n)*}$
through the identity~\ref{id:iota_hecke}, namely we define
\begin{eqnarray*}
 \phi|\tilde{V}_{\alpha,n-\alpha}(4)
 &:=&
 2^{k(2n+1) - n (n+\frac72) + \frac12 \alpha }\
 \iota_{\mathcal{M}[\smat{2}{ }{ }{1}]}(\psi| V_{\alpha,n-\alpha}(4))
\end{eqnarray*}
for any $\phi \in J_{k-\frac12,m}^{(n)*}$, and where $\psi \in J_{k,\mathcal{M}}^{(n)*}$
is the Jacobi form which satisfies $\iota_{\mathcal{M}}(\psi) = \phi$.
We have $\phi|\tilde{V}_{\alpha,n-\alpha}(4) \in J_{k-\frac12,4m}^{(n)}$.
The reader is referred to \S\ref{ss:iota} for the definition of $\iota_{\mathcal{M}}$
and is referred to \S\ref{ss:hecke_operators} for the definition of $V_{\alpha,n-\alpha}(4)$.

\subsection{Index-shift maps $V_{p}^{(1)}$, $V_{1,p}^{(2)}$ and $V_{2,p}^{(2)}$}\label{ss:hecke_V}
In the case $n = 1$, $2$, we write simply
\begin{eqnarray*}
 \varphi|V_{p}^{(1)}
 &:=&
 \varphi|\tilde{V}_{1,0}(p^2) ,\\ 
 \phi|V_{1,p}^{(2)}
 &:=&
 p^{-k+\frac72} \phi|\tilde{V}_{1,1}(p^2) ,
\\
 \phi|V_{2,p}^{(2)}
 &:=&
 \phi|\tilde{V}_{2,0}(p^2)
\end{eqnarray*}
for any $\varphi \in J_{k-\frac12,m}^{(1)*}$, $\phi \in J_{k-\frac12,m}^{(2)*}$ and for any prime $p$.
The reader is referred to \S\ref{ss:fj_expansion_half}
for the definition of the subspace $J_{k-\frac12,m}^{(n)*}$ of $J_{k-\frac12,m}^{(n)}$,
and is referred to \S\ref{ss:hecke_operators} and \S\ref{ss:hecke_p2}
for the definition of the index-shift maps $\tilde{V}_{1,0}(p^2)$, $\tilde{V}_{1,1}(p^2)$ and $\tilde{V}_{2,0}(p^2)$.
We remark that $\varphi|V_{p}^{(1)} \in J_{k-\frac12,mp^2}^{(1)}$ and $\phi|V_{i,p}^{(2)} \in J_{k-\frac12,mp^2}^{(2)}$ $(i=1,2)$.

\section{Action of the index-shift maps on Jacobi-Eisenstein series}
In this section we fix a positive definite half-integral symmetric matrix
$\mathcal{M} = \left( \begin{smallmatrix} * & * \\ * & 1 \end{smallmatrix} \right) \in \mbox{Sym}_2^+$.
The purpose of this section is to express the function $E_{k,\mathcal{M}}^{(n)}|V_{\alpha,n-\alpha}(p^2)$
as a summation of certain exponential functions with generalized Gauss sums,
where $E_{k,\mathcal{M}}^{(n)}$ is the Jacobi-Eisenstein series of index $\mathcal{M}$ (cf. \S\ref{s:fourier_matrix}),
and where $V_{\alpha,n-\alpha}(p^2)$ is an index-shift map (cf. \S\ref{ss:hecke_operators}).
We remark that $E_{k,\mathcal{M}}^{(n)}|V_{\alpha,n-\alpha}(p^2) \in J_{k,\mathcal{M}[\smat{p}{0}{0}{1}]}^{(n)}$ for any
$0 \leq \alpha \leq n$.

First we will express $E_{k,\mathcal{M}}^{(n)}|V_{\alpha,n-\alpha}(p^2)$ as a summation of certain
functions $K_{i,j}^{\beta}$ (cf. Lemma~\ref{lemma:kij}),
and after that, we will express $E_{k,\mathcal{M}}^{(n)}|V_{\alpha,n-\alpha}(p^2)$ as a summation of
certain functions $\tilde{K}_{i,j}^{\beta}$ (cf. Proposition~\ref{prop:tilde_kij}).

The calculation in this section is an analogue to the one given in \cite{Ya2} in the case of index $1$.
However, we need to generalize his calculation for Jacobi-Eisenstein series $E_{k,1}^{(n)}$ of index $1$ to our case
for $E_{k,\mathcal{M}}^{(n)}$ with $\mathcal{M} = \smat{*}{*}{ *}{1} \in \mbox{Sym}_2^+$.
This generalization is not obvious, because we need to treat the action of
the Heisenberg group parts $[((0,u_2),(0,v_2)),0]$,
which plays an important rule in this generalization.

\subsection{The function $K_{i,j}^{\beta}$}\label{ss:kij}
The purpose of this subsection is to introduce functions $K_{i,j}^{\beta}$ and to express
$E_{k,\mathcal{M}}^{(n)}|V_{\alpha,n-\alpha}(p^2)$ as a summation of $K_{i,j}^{\beta}$.
Moreover, we shall calculate $K_{i,j}^{\beta}$ more precisely (cf. Lemma~\ref{lemma:kij}).

We put $\delta_{i,j} := \mbox{diag}(1_i,p 1_{j-i},p^2 1_{n-j})$.
For $x = \mbox{diag}(0_i,x',0_{n-i-j})$ with $x' = {^t x'} \in \Z^{(j-i,j-i)}$,
we set $\delta_{i,j}(x) := \left(\begin{matrix}p^2 \delta_{i,j}^{-1} & x \\ 0 & \delta_{i,j} \end{matrix}\right)$
and 
$\Gamma(\delta_{i,j}(x)) := \Gamma_n \cap \delta_{i,j}(x)^{-1}\Gamma_{\infty}^{(n)} \delta_{i,j}(x)$.

For $x = \mbox{diag}(0_i,x',0_{n-i-j})$, $y = \mbox{diag}(0_i,y',0_{n-i-j})$ with $x'={^t x'}, y' = {^t y'} \in \Z^{(j-i,j-i)}$,
we say that $x$ and $y$ are \textit{equivalent},
if there exists a matrix
$u \in \mbox{GL}_n(\Z) \cap \delta_{i,j}\mbox{GL}_n(\Z)\delta_{i,j}^{-1}$
which has a form
$u = 
 \left(\begin{smallmatrix}u_1& * & * \\
                          * &u_2& * \\ 
                          * & * &u_3 \end{smallmatrix}\right)$
satisfying
$x' \equiv u_2\, y'\, {^t u_2} \mod p$,
where $u_2 \in \Z^{(j-i,j-i)}$, $u_1 \in \Z^{(i,i)}$ and $u_3\in \Z^{(n-j,n-j)}$.

We denote by $[x]$ the equivalent class of $x$.
We quote the following lemma from \cite{Ya2}.
\begin{lemma}\label{lemma:hecke_decom}
The double coset $\Gamma_n \mbox{diag}(1_{\alpha},p 1_{n-\alpha}, p^2 1_{\alpha}, p 1_{n-\alpha})\Gamma_n$ is 
written as a disjoint union 
 \begin{eqnarray*}
  \Gamma_n \left( \begin{smallmatrix}1_{\alpha} &&&\\
                                &p 1_{n-\alpha}&&\\
                                &&p^2 1_{\alpha}&\\
                                &&&p 1_{n-\alpha}  \end{smallmatrix}\right)\Gamma_n
  &=&
  \bigcup_{\begin{smallmatrix} i,j \\ 0\leq i \leq j \leq n \end{smallmatrix}}
  \bigcup_{[x]}
   \Gamma_{\infty}^{(n)} \delta_{i,j}(x) \Gamma_n ,
 \end{eqnarray*}
where $[x]$ runs over all equivalent classes which satisfy $\mbox{rank}_p(x) = j - i - n + \alpha \geq 0$.
\end{lemma}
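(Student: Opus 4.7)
The plan is to outline the standard double-coset decomposition argument; since the lemma is quoted directly from \cite{Ya2}, a detailed reproof is unnecessary, and I will focus on the structure of the proof. The starting point is the Siegel parabolic decomposition of $\Gamma_n X \Gamma_n$ with $X = \mbox{diag}(1_\alpha, p 1_{n-\alpha}, p^2 1_\alpha, p 1_{n-\alpha})$: every element admits a representative modulo $\Gamma_\infty^{(n)}$ on the left of the form $\left(\begin{smallmatrix} p^2 {^t D}^{-1} & B \\ 0 & D \end{smallmatrix}\right)$ with $D \in \Z^{(n,n)}$ and $B\,{^t D}$ symmetric, reflecting the symplectic similitude condition with multiplier $p^2$.

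Next, I would normalize $D$ using the Smith normal form in $\mbox{GL}_n(\Z)$. Writing $D = u_1 \delta_{i,j} u_2$ with $u_1, u_2 \in \mbox{GL}_n(\Z)$, the left factor $u_1$ may be absorbed into the $\Gamma_\infty^{(n)}$-coset on the left, and the right factor $u_2$ into a coset of $\Gamma_n$ on the right via the block-diagonal embedding $\mbox{diag}(u_2^{-1}, {^t u_2})$. This produces the indexing by pairs $(i,j)$ with $0 \leq i \leq j \leq n$. After this normalization, the block $B$ is determined modulo an integer lattice depending on $\delta_{i,j}$; further reduction by $\Gamma_\infty^{(n)}$-action (adding integer symmetric matrices suitably scaled by $\delta_{i,j}$) leaves a representative $b_0 = \mbox{diag}(0_i, x', 0_{n-j})$ with $x' \in \Z^{(j-i,j-i)}$ symmetric, taken modulo $p$. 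The residual right action of the stabilizer $\mbox{GL}_n(\Z) \cap \delta_{i,j}\mbox{GL}_n(\Z)\delta_{i,j}^{-1}$ acts on $x'$ by $x' \mapsto u_2\, x'\, {^t u_2}$ modulo $p$, which is precisely the equivalence relation $[x]$ defined before the lemma.

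The last step, and main technical input, is to verify the rank constraint $\mbox{rank}_p(x) = j-i-n+\alpha$ by matching elementary divisors. The outer blocks of the full $2n \times 2n$ representative contribute $(i+n-j)$ ones and $(i+n-j)$ $p^2$'s to its Smith form, while the middle $2(j-i) \times 2(j-i)$ block $\smat{p\, 1_{j-i}}{x'}{0}{p\, 1_{j-i}}$ with $\mbox{rank}_p(x') = r$ contributes $r$ ones, $2(j-i-r)$ $p$'s and $r$ $p^2$'s. The key local computation here is that $\smat{p}{1}{0}{p}$ has Smith form $\mbox{diag}(1,p^2)$, so each rank unit of $x'$ modulo $p$ trades a pair of $p$'s for a $1$ and a $p^2$. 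Matching the totals against the elementary divisors $(1^{\alpha}, p^{2(n-\alpha)}, (p^2)^{\alpha})$ of $X$ forces $r = j-i-n+\alpha$. The main obstacle in a fully self-contained proof is this Smith-form bookkeeping, which although elementary requires care; I would refer the reader to \cite{Ya2} for the detailed verification.
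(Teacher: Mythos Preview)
Your proposal is correct and in fact goes well beyond the paper, which gives no argument at all and simply refers the reader to \cite[Corollary~2.2]{Ya2}. The outline you sketch---upper-triangular representatives, Smith normal form of the $D$-block to extract $(i,j)$, reduction of the $B$-block to $\mbox{diag}(0_i,x',0_{n-j})$, and the elementary-divisor count on the $2(j-i)\times 2(j-i)$ middle block to pin down $\mbox{rank}_p(x')$---is exactly the structure of Yamazaki's proof. One small slip: the symplectic similitude condition on an upper-triangular representative is that ${^t D}B$ (equivalently $BD^{-1}$) be symmetric, not $B\,{^t D}$; this does not affect the rest of your argument.
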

\begin{proof}
The reader is referred to~\cite[Corollary~2.2]{Ya2}.
\end{proof}

We put $U := \begin{pmatrix} p^2&0\\0&p \end{pmatrix}$.

By the definition of the index-shift map $V_{\alpha,n-\alpha}(p^2)$
and of the Jacobi-Eisenstein series $E_{k,\mathcal{M}}^{(n)}$, we have
\begin{eqnarray*}
 &&
 E_{k,\mathcal{M}}^{(n)}|V_{\alpha,n-\alpha}(p^2) \\
 &=&
 \sum_{u,v \in \Z^{(n,1)}}
 \sum_{M' \in \Gamma_n \backslash \Gamma_n diag(1_{\alpha},p1_{n-\alpha},p^2 1_{\alpha}, p 1_{n-\alpha})\Gamma_n}
 \sum_{M \in \Gamma_{\infty}^{(n)}\backslash \Gamma_n}
 \sum_{\lambda \in \Z^{(n,1)}}
\\
 &&
  1 |_{k,\mathcal{M}} ([(\lambda,0),0],MM'\times \smat{U}{0}{0}{p^2 U^{-1}})
    |_{k,\mathcal{M}[\smat{p}{0}{0}{1}]}[((0, u),(0, v)),0]
\\
 &=&
 \sum_{u,v \in \Z^{(n,1)}}
 \sum_{M \in \Gamma_{\infty}^{(n)} \backslash \Gamma_n
              diag(1_{\alpha},p1_{n-\alpha},p^2 1_{\alpha}, p 1_{n-\alpha})
             \Gamma_n}
 \sum_{\lambda \in \Z^{(n,1)}}
\\
 &&
  1 |_{k,\mathcal{M}} ([(\lambda,0),0],M\times \smat{U}{0}{0}{p^2 U^{-1}})
    |_{k,\mathcal{M}[\smat{p}{0}{0}{1}]}[((0, u),(0, v)),0] .
\end{eqnarray*}
Hence, due to Lemma~\ref{lemma:hecke_decom}, we have
\begin{eqnarray*}
 &&
 E_{k,\mathcal{M}}^{(n)}|V_{\alpha,n-\alpha}(p^2)
\\
 &=&
 \sum_{u,v \in \Z^{(n,1)}}
 \sum_{\begin{smallmatrix} i,j \\ 0\leq i \leq j \leq n \end{smallmatrix}}
 \sum_{\begin{smallmatrix} [x] \\ rank_p(x) = j-i-n+\alpha\end{smallmatrix}}
 \sum_{M \in \Gamma_{\infty}^{(n)}\backslash \delta_{i,j}(x)\Gamma_n}
 \sum_{\lambda \in \Z^{(n,1)}}
\\
 &&
  1 |_{k,\mathcal{M}} ([(\lambda,0),0],M\times\smat{U}{0}{0}{p^2 U^{-1}})
    |_{k,\mathcal{M}[\smat{p}{0}{0}{1}]}[((0, u),(0, v)),0]
\\
 &=&
 \sum_{u,v \in \Z^{(n,1)}}
 \sum_{\begin{smallmatrix} i,j \\ 0\leq i \leq j \leq n \end{smallmatrix}}
 \sum_{\begin{smallmatrix} [x] \\ rank_p(x) = j-i-n+\alpha \end{smallmatrix}}
 \sum_{M \in \Gamma(\delta_{i,j}(x))\backslash \Gamma_n}
 \sum_{\lambda \in \Z^{(n,1)}}
\\
 &&
  1 |_{k,\mathcal{M}} ([(\lambda,0),0],\delta_{i,j}(x)M\times\smat{U}{0}{0}{p^2 U^{-1}})
    |_{k,\mathcal{M}[\smat{p}{0}{0}{1}]}[((0, u),(0, v)),0] .
\end{eqnarray*}
For $\beta \leq j-i $ we define a function
\begin{eqnarray*}
 &&
 K_{i,j}^{\beta}(\tau,z)
\\
 &:=&
 K_{i,j,\mathcal{M},p}^{\beta}(\tau,z)
\\
 &=& \!\!
 \sum_{\begin{smallmatrix}[x] \\ rank_p(x) = \beta \end{smallmatrix}}
 \sum_{M \in \Gamma(\delta_{i,j}(x))\backslash \Gamma_n}\sum_{\lambda \in \Z^n}
 \left\{
  1|_{k,\mathcal{M}}(
   [(\lambda,0),0],
   \delta_{i,j}(x) M 
    \times\smat{U}{0}{0}{p^2 U^{-1}}
   )
 \right\}
  (\tau,z) .
\end{eqnarray*}
Then
\begin{eqnarray*}
 E_{k,\mathcal{M}}^{(n)}|V_{\alpha,n-\alpha}(p^2)
 &=&
 \sum_{\begin{smallmatrix} i,j \\  0\leq i \leq j \leq n \end{smallmatrix}}
 \sum_{u,v \in \Z^{(n,1)}}
  K_{i,j}^{\alpha-i-n+j}(\tau,z)
  |_{k,\mathcal{M}[\smat{p}{0}{0}{1}]}[((0, u),(0, v)),0] .
\end{eqnarray*}

We define
\begin{eqnarray*}
 L_{i,j}
 &:=&
 L_{i,j,\mathcal{M},p}
 =
 \left.
 \left\{ \left(\begin{smallmatrix}\lambda_1\\ \lambda_2\\ \lambda_3 \end{smallmatrix}\right)
   \ \right| \
 \begin{matrix}
  \lambda_1 \in (p\Z)^{(i,2)}\, , \, \lambda_2 \in \Z^{(j-i,2)}\, , \, \lambda_3 \in (p^{-1}\Z)^{(n-j,2)}
 \\
  2 \lambda_2 \mathcal{M} {^t \lambda_3} \in \Z^{(j-i,n-j)} \, , \, 
  \lambda_3 \mathcal{M} {^t \lambda_3} \in \Z^{(n-j,n-j)}
 \end{matrix}
 \right\}.
\end{eqnarray*}
Moreover, we define a subgroup $\Gamma(\delta_{i,j})$ of $\Gamma_{\infty}^{(n)}$ by
\begin{eqnarray*}
 \Gamma(\delta_{i,j})
 &:=&
 \left.
 \left\{
  \begin{pmatrix}
   A & B \\ 0_n & {^t A}^{-1}
  \end{pmatrix}
  \in
  \Gamma_{\infty}^{(n)}
 \, \right| \,
   A \in \delta_{i,j}\mbox{GL}_n(\Z) \delta_{i,j}^{-1}
 \right\} .
\end{eqnarray*}

\begin{lemma} \label{lemma:kij}
Let $K_{i,j}^{\beta}$ be as the above.
We obtain
\begin{eqnarray*}
 K_{i,j}^{\beta}(\tau,z)
 &=&
 p^{-k(2n-i-j+1)+(n-j)(n-i+1)}
 \sum_{M \in \Gamma(\delta_{i,j})\backslash \Gamma_n}
\\
 && \times \sum_{\lambda \in L_{i,j}}
 1|_{k,\mathcal{M}}([(\lambda,0),0_n],M)(\tau,z\smat{p}{0}{0}{1}) \!\!\!
 \sum_{\begin{smallmatrix} x = ^t x \in (\Z/p\Z)^{(n,n)} \\
                           x = diag(0_i,x',0_{n-j}) \\
                           rank_p(x') = \beta \end{smallmatrix}} \!\!\!
 e\left(\frac{1}{p}\mathcal{M}{^t\lambda} x \lambda\right),
\end{eqnarray*}
where, in the last summation, $x$ runs over
a complete set of representatives of $(\Z/p\Z)^{(n,n)}$ such that $x = {^t x}$,
$\mbox{rank}_p(x) = \beta$ and
$x = \mbox{diag}(0_i,x',0_{n-j})$ with some $x' \in (\Z/p\Z)^{(j-i,j-i)}$.
\end{lemma}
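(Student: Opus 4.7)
My plan is to reduce $K_{i,j}^{\beta}$ to a slash by the ``pure'' coset representative $\delta_{i,j}(0) M \times \smat{U}{0}{0}{p^2 U^{-1}}$ after extracting the dependence on $x$ as a phase factor, and then to rearrange the triple summation over $[x]$, $M$, and $\lambda$. I would begin from the decomposition
\begin{eqnarray*}
 \delta_{i,j}(x)
 &=&
 \delta_{i,j}(0)\left(\begin{smallmatrix} 1_n & p^{-2}\delta_{i,j} x \\ 0_n & 1_n \end{smallmatrix}\right),
\end{eqnarray*}
in which $p^{-2}\delta_{i,j} x = \mbox{diag}(0_i, p^{-1} x', 0_{n-j})$ is rational. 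The right-hand upper-triangular factor has trivial factor of automorphy, so its slash acts simply as the translation $(\tau, z) \mapsto (\tau + p^{-2}\delta_{i,j} x, z)$. Pushing this translation past the Heisenberg element $[(\lambda, 0), 0]$ via the cocycle identity for the slash operator collects the $x$-dependence as a phase, which after the rescaling of $\lambda$ induced by the diagonal part $\delta_{i,j}$ becomes exactly $e\bigl(\tfrac{1}{p} \mathcal{M}\,{^t\lambda}\,x\,\lambda\bigr)$, the Gauss-sum term in the statement.

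Next, I would rewrite the sum over equivalence classes $[x]$ combined with the sum over cosets in $\Gamma(\delta_{i,j}(x))\backslash\Gamma_n$ as a single sum over all symmetric $x = \mbox{diag}(0_i, x', 0_{n-j})$ with $\mbox{rank}_p(x') = \beta$, combined with cosets in the $x$-independent subgroup $\Gamma(\delta_{i,j})\backslash\Gamma_n$. The mechanism is that $\Gamma(\delta_{i,j}(x))$ is contained in $\Gamma(\delta_{i,j})$, and the quotient acts on the admissible $x'$ precisely via $x' \mapsto u_2 x'\,{^t u_2}$, which is the equivalence defining $[x]$; the orbit-stabilizer principle then collapses the double sum into the desired single sum. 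Concurrently, I would change variables in the $\lambda$-summation: under the slash by $\delta_{i,j}(0) M \times \smat{U}{0}{0}{p^2 U^{-1}}$, the effective Heisenberg parameter rescales by $\delta_{i,j}^{-1}$ on the rows and by $U$ on the columns, so that the original summation $\lambda \in \Z^{(n,2)}$ (with second column zero) transforms into $\lambda \in L_{i,j}$. The defining conditions $\lambda_1 \in (p\Z)^{(i,2)}$, $\lambda_3 \in (p^{-1}\Z)^{(n-j,2)}$ together with $2\lambda_2\mathcal{M}\,{^t\lambda_3} \in \Z^{(j-i, n-j)}$ and $\lambda_3\mathcal{M}\,{^t\lambda_3} \in \Z^{(n-j, n-j)}$ are exactly the integrality constraints needed for the Heisenberg exponential in the transformed slashed expression to be well-defined. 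The overall power $p^{-k(2n-i-j+1)+(n-j)(n-i+1)}$ then arises by collecting the determinant factor $\det(\delta_{i,j})^{-k}$, the similitude factor coming from the $\smat{U}{0}{0}{p^2 U^{-1}}$ part, and the Jacobian of the $\lambda$-change-of-variables.

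I expect the main obstacle to be the phase bookkeeping in the first step: moving the rational upper-triangular translation past the Heisenberg element $[(\lambda, 0), 0]$ and then through the symplectic $\delta_{i,j}(0) M \times \smat{U}{0}{0}{p^2 U^{-1}}$ must be carried out carefully, because the matrix index $\mathcal{M}$ is non-scalar and hence couples the two columns of the Heisenberg $\lambda$-parameter. In the scalar-index case treated in \cite{Ya2} this coupling is absent, so one of the new ingredients needed here is the explicit handling of how the off-diagonal entry $r/2$ of $\mathcal{M}$ redistributes between the $\lambda$-columns under the action of $U = \smat{p^2}{0}{0}{p}$. Once this is done correctly, the resulting phase aligns with the Gauss-sum factor $e\bigl(\tfrac{1}{p}\mathcal{M}\,{^t\lambda}\,x\,\lambda\bigr)$, and the identity follows.
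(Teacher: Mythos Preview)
Your decomposition of $\delta_{i,j}(x)$, the extraction of the $x$-dependence as the phase $e\bigl(\tfrac1p\mathcal{M}\,{^t\lambda}x\lambda\bigr)$, and the orbit--stabilizer collapse of the sum over $[x]$ together with the passage from $\Gamma(\delta_{i,j}(x))\backslash\Gamma_n$ to $\Gamma(\delta_{i,j})\backslash\Gamma_n$ are all correct and match the paper's argument.

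There is, however, a genuine gap in your account of where the lattice $L_{i,j}$ and the power $p^{(n-j)(n-i+1)}$ come from. After the change of variables $\lambda\mapsto p\,\delta_{i,j}^{-1}\lambda$, the new parameter simply ranges over the rescaled lattice $(p\Z)^{(i,2)}\times\Z^{(j-i,2)}\times(p^{-1}\Z)^{(n-j,2)}$; no linear rescaling can produce the \emph{quadratic} constraints $2\lambda_2\mathcal{M}\,{^t\lambda_3}\in\Z^{(j-i,n-j)}$ and $\lambda_3\mathcal{M}\,{^t\lambda_3}\in\Z^{(n-j,n-j)}$ that define $L_{i,j}$. These are also not ``integrality constraints needed for the Heisenberg exponential to be well-defined'' --- the slashed expression $1|_{k,\mathcal{M}}([(\lambda,0),0_n],M)$ is a perfectly good holomorphic function of $(\tau,z)$ for any real $\lambda$. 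Likewise, a bijective change of variables in a lattice sum carries no Jacobian factor, so the power of $p$ cannot arise that way.

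What the paper actually does is insert an extra averaging over the unipotent radical: one writes $\Gamma(\delta_{i,j}(x))\backslash\Gamma_n$ as $\bigl(\Gamma(\delta_{i,j}(x))\backslash\Gamma(\delta_{i,j}(x))\mathcal{U}\bigr)\times\bigl(\Gamma(\delta_{i,j}(x))\mathcal{U}\backslash\Gamma_n\bigr)$, where $\mathcal{U}=\left\{\smat{1_n}{s}{0_n}{1_n}: s={^t s}\in\Z^{(n,n)}\right\}$. The sum over the first factor becomes the character sum $\sum_s e\bigl(p^2\mathcal{M}\,{^t\lambda}\,\delta_{i,j}^{-1}s\,\delta_{i,j}^{-1}\lambda\bigr)$, which equals $p^{(n-j)(n-i+1)}$ precisely when the two quadratic congruences defining $L_{i,j}$ hold, and vanishes otherwise. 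This simultaneous production of the $p$-power and the projection onto $L_{i,j}$ is the missing ingredient in your outline; once you add it, the rest of your plan goes through as in the paper.
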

\begin{proof}
We proceed as in~\cite[Proposition~3.2]{Ya2}.
The inside of the last summation of $K_{i,j}^{\beta}(\tau,z)$ is
\begin{eqnarray*}
 &&
 \left(1|_{k,\mathcal{M}}(
   [(\lambda,0),0],
   \delta_{i,j}(x) M 
    \times\smat{U}{0}{0}{p^2 U^{-1}})\right)(\tau,z)
\\
 &=&
 \det(p^2U^{-1})^{-k}\det(\delta_{i,j})^{-k}
\\
 &&
 \times
 \left(e(\mathcal{M}(^t\lambda(p^2 \delta_{ij}^{-1}\tau+ x)\delta_{ij}^{-1}\lambda 
 + 2 ^t \lambda \delta_{ij}^{-1} z \begin{pmatrix} p^2 & \\ & p \end{pmatrix}))
 |_{k,\mathcal{M}[\left(\begin{smallmatrix} p&0\\0&1\end{smallmatrix} \right)]} M\right)(\tau,z)
\\
 &=&
 p^{-k(2n-i-j+1)} 
\\
 && \times
 \left(\left((1|_{k,\mathcal{M}}([(p\delta_{i,j}^{-1}\lambda,0),0],\begin{pmatrix}1&p^{-1}x\\0&1\end{pmatrix}))
  (\tau,z\smat{p}{0}{0}{1})\right)|_{k,\mathcal{M}[\left(\begin{smallmatrix} p&0\\0&1\end{smallmatrix} \right)]} M\right)(\tau,z)
\\
 &=&
 p^{-k(2n-i-j+1)}
 \left(1|_{k,\mathcal{M}}([(p\delta_{i,j}^{-1}\lambda,0),0],\begin{pmatrix}1&p^{-1}x\\0&1\end{pmatrix}M)\right)
  (\tau,z\smat{p}{0}{0}{1}) .
\end{eqnarray*}
Here we used the identity $\delta_{i,j} x = \delta_{i,j} \mbox{diag}(0_i,x',0_{n-j}) = p x$.
Thus
\begin{eqnarray*}
 K_{i,j}^{\beta}(\tau,z)
 &=&
 p^{-k(2n-i-j+1)}
 \sum_{\begin{smallmatrix}[x] \\ rank_p(x) = \beta \end{smallmatrix}}
 \sum_{M \in \Gamma(\delta_{i,j}(x))\backslash \Gamma_n}
\\
 &&
 \times \sum_{\lambda \in \Z^n}
  1|_{k,\mathcal{M}}\left([(p\delta_{i,j}^{-1}\lambda,0),0_n],\begin{pmatrix}1&p^{-1}x\\0&1\end{pmatrix}M\right)
   (\tau,z\smat{p}{0}{0}{1}) .
\end{eqnarray*}
We put
\begin{eqnarray*}
 \mathcal{U}
 &:=&
 \left.
 \left\{\begin{pmatrix}1_n& s \\ 0_n & 1_n \end{pmatrix} \, \right| \, 
  s = {^t s} \in \Z^{(n,n)}
 \right\} .
\end{eqnarray*}
Then the set
\begin{eqnarray*}
 \left.
 \left\{
  \begin{pmatrix}1_n& s \\ 0_n & 1_n \end{pmatrix}
 \, \right| \,
  s = \left(\begin{smallmatrix}0&0&0\\0&0&s_2\\0&^t s_2&s_3\end{smallmatrix}\right),
  s_2 \in (\Z/p\Z)^{(j-i,n-j)},
  s_3 = {^t s_3} \in (\Z/p \Z)^{(n-j,n-j)}
 \right\}
\end{eqnarray*}
is a complete set of representatives of $\Gamma(\delta_{i,j}(x))\backslash \Gamma(\delta_{i,j}(x))\mathcal{U}$.
Therefore
\begin{eqnarray*}
 &&
 K_{i,j}^{\beta}(\tau,z)
\\
 &=&
 p^{-k(2n-i-j+1)}
 \sum_{\begin{smallmatrix}[x] \\ rank_p(x) = \beta \end{smallmatrix}}
 \sum_{M \in (\Gamma(\delta_{i,j}(x))\mathcal{U})\backslash \Gamma_n}\sum_{\lambda \in \Z^n}
 \sum_{\smat{1_n}{s}{0}{1_n} \in \Gamma(\delta_{i,j}(x))\backslash(\Gamma(\delta_{i,j}(x))\mathcal{U})}
\\
 &&
 \times\quad 1|_{k,\mathcal{M}}([(p\delta_{i,j}^{-1}\lambda,0),0_n],\begin{pmatrix}1_n&p^{-1}x\\0&1_n\end{pmatrix}
                            \begin{pmatrix}1_n&s\\0&1_n\end{pmatrix}M)(\tau,z\smat{p}{0}{0}{1})
\end{eqnarray*}
Hence
\begin{eqnarray*}
 K_{i,j}^{\beta}(\tau,z)
 &=&
 p^{-k(2n-i-j+1)}
 \sum_{\begin{smallmatrix}[x] \\ rank_p(x) = \beta \end{smallmatrix}}
 \sum_{M \in (\Gamma(\delta_{i,j}(x))\mathcal{U})\backslash \Gamma_n}\sum_{\lambda \in \Z^n} 
\\
 &&
 \times\quad
 1|_{k,\mathcal{M}}
  ([(p\delta_{i,j}^{-1}\lambda,0),0_n],\begin{pmatrix}1_n&p^{-1}x\\0&1_n\end{pmatrix}M)(\tau,z\smat{p}{0}{0}{1})
\\
 &&
 \times
 \sum_{\smat{1_n}{s}{0}{1_n} \in \Gamma(\delta_{i,j}(x))\backslash(\Gamma(\delta_{i,j}(x))\mathcal{U})}
 e\left(p^2 \mathcal{M} {^t\lambda} \delta_{i,j}^{-1} s \delta_{i,j}^{-1} \lambda \right) .
\end{eqnarray*}
The last summation of the above identity is
\begin{eqnarray*}
&&
 \sum_{s} e\left(p^2 \mathcal{M} {^t\lambda} \delta_{i,j}^{-1} s \delta_{i,j}^{-1} \lambda \right)
\\
&=&
 \begin{cases}
   p^{(n-j)(n-i+1)} & \mbox{ if } \lambda_3 \mathcal{M} {^t\lambda_3} \equiv 0 \!\! \mod p^2  
                                  \mbox{ and } 
                                  2 \lambda_3 \mathcal{M} {^t\lambda_2} \equiv 0 \!\! \mod p ,
                                  \\
   0 & \mbox{ otherwise, }
 \end{cases}
\end{eqnarray*}
where 
$\lambda = \left(\begin{smallmatrix} \lambda_1 \\ \lambda_2 \\ \lambda_3 \end{smallmatrix} \right) \in \Z^{(n,2)}$
with $\lambda_1 \in \Z^{(i,2)}$, $\lambda_2 \in \Z^{(j-i,2)}$ and $\lambda_3 \in \Z^{(n-j,2)}$.

Thus
\begin{eqnarray*}
 K_{i,j}^{\beta}(\tau,z)
 &=&
 p^{-k(2n-i-j+1)+(n-j)(n-i+1)}\sum_{\begin{smallmatrix}[x] \\ rank_p(x) = \beta \end{smallmatrix}}
 \sum_{M \in (\Gamma(\delta_{i,j}(x))\mathcal{U})\backslash \Gamma_n}
\\
 && \times \sum_{\lambda \in L_{i,j}}
  1|_{k,\mathcal{M}}
  ([(\lambda,0),0_n],\smat{1_n}{p^{-1}x}{0}{1_n}M)(\tau,z\smat{p}{0}{0}{1}) .
\end{eqnarray*}
Now
$\Gamma(\delta_{i,j}(x))\mathcal{U}$ is a subgroup of $\Gamma(\delta_{i,j})$.
For any $\smat{A}{B}{0_n}{{^t A}^{-1}} \in \Gamma(\delta_{i,j})$ we have
\begin{eqnarray*}
 &&
 1|_{k,\mathcal{M}}([(\lambda,0),0_n],\smat{1_n}{p^{-1}x}{0_n}{1_n} \smat{A}{B}{0_n}{^t A^{-1}} M)
 \\
 &=&
 1|_{k,\mathcal{M}}([(\lambda,0),0_n],\smat{A}{B}{0_n}{{^t A}^{-1}}\smat{1_n}{p^{-1}A^{-1}x{{^t A}^{-1}}}{0_n}{1_n}M)
 \\
 &=&
 1|_{k,\mathcal{M}}([({^t A}\lambda,{^t B}\lambda),0_n],\smat{1_n}{p^{-1}A^{-1}x{{^t A}^{-1}}}{0_n}{1_n}M)
 \\
 &=&
 1|_{k,\mathcal{M}}([({^t A}\lambda,0),0_n],\smat{1_n}{p^{-1}A^{-1}x{{^t A}^{-1}}}{0_n}{1_n}M),
\end{eqnarray*}
and ${^tA}L_{i,j} = L_{i,j}$.
Moreover, when $\smat{A}{B}{0_n}{{^t A}^{-1}}$ runs over all elements in a complete set of representatives
of $\Gamma(\delta_{i,j}(x))\mathcal{U} \backslash \Gamma(\delta_{i,j})$, then  $A^{-1}x {^t A}^{-1}$ runs over all elements in the equivalent class $[x]$ (cf.~\cite[proof of Proposition~3.2]{Ya2}).
Therefore, we have
\begin{eqnarray*}
 &&
 K_{i,j}^{\beta}(\tau,z) \\
 &=&
 p^{-k(2n-i-j+1)+(n-j)(n-i+1)}
 \sum_{\begin{smallmatrix} x = ^t x \in (\Z/p\Z)^{(n,n)} \\ 
                           x = diag(0_i,x',0_{n-j}) \\
                           rank_p(x') = \beta 
       \end{smallmatrix}}
 \sum_{M \in \Gamma(\delta_{i,j})\backslash \Gamma_n}
\\
 && \times \sum_{\lambda \in L_{i,j}}
 1|_{k,\mathcal{M}}([(\lambda,0),0_n],\smat{1_n}{p^{-1}x}{0}{1_n}M)(\tau,z\smat{p}{0}{0}{1})
\\
 &=&
 p^{-k(2n-i-j+1)+(n-j)(n-i+1)}
 \sum_{M \in \Gamma(\delta_{i,j})\backslash \Gamma_n}
\\
 && \times \sum_{\lambda \in L_{i,j}}
 1|_{k,\mathcal{M}}([(\lambda,0),0_n],M)(\tau,z\smat{p}{0}{0}{1})
 \sum_{\begin{smallmatrix} x = ^t x \in (\Z/p\Z)^{(n,n)} \\
                           x = diag(0_i,x',0_{n-j}) \\
                           rank_p(x') = \beta \end{smallmatrix}}
 e\left(\frac{1}{p}\mathcal{M}{^t\lambda} x \lambda\right).
\end{eqnarray*}
\end{proof}

\subsection{The function $\tilde{K}_{i,j}^{\beta}$}\label{ss:tilde_kij}
The purpose of this subsection is to introduce functions $\tilde{K}_{i,j}^{\beta}$ and to express
$E_{k,\mathcal{M}}^{(n)}|V_{\alpha,n-\alpha}(p^2)$ as a summation of $\tilde{K}_{i,j}^{\beta}$.
Moreover, we shall show that $\tilde{K}_{i,j}^{\beta}$ is a summation of certain exponential functions
with generalized Gauss sums (cf. Proposition~\ref{prop:tilde_kij}).

We define
\begin{eqnarray*}
 L_{i,j}^*
 &:=&
 L_{i,j,\mathcal{M},p}^* \\
 &=&
 \left\{
  \left( \begin{matrix} \lambda_1\\ \lambda_2\\ \lambda_3 \end{matrix} \right)
  \in (p^{-1}\Z)^{(n,2)}
 \, \left| \,
  \begin{matrix}
  \lambda_1\smat{p}{0}{0}{1}^{-1} \in \Z^{(i,2)}, \
  \lambda_2 \in \Z^{(j-i,2)} \\
  \lambda_3 \in (p^{-1}\Z)^{(n-j,2)},\
  2 \lambda_2 \mathcal{M} {^t \lambda_3} \in \Z^{(j-i,n-j)} \\
  \lambda_3 \mathcal{M} {^t\lambda_3} \in \Z^{(n-j,n-j)}, \
  2 \lambda_3 \mathcal{M} \left(\begin{smallmatrix}0\\1 \end{smallmatrix}\right) \in \Z^{(n-j,1)}
  \end{matrix}
 \right\}
 \right.
\end{eqnarray*}
and define a generalized Gauss sum
\begin{eqnarray*}
 G_{\mathcal{M}}^{j-i,l}(\lambda_2) 
 &:=& 
 \sum_{\begin{smallmatrix} x' = ^t x' \in (\Z/p\Z)^{(j-i,j-i)} \\
                           rank_p(x') = j-i-l
       \end{smallmatrix}}
  e\left(\frac{1}{p}\mathcal{M} {^t \lambda_2 } x'  \lambda_2\right)
\end{eqnarray*}
for $\lambda_2 \in \Z^{(j-i,2)}$.
We remark that a formula for this generalized Gauss sum is given
by Saito \cite{Sa}. 
We define
\begin{eqnarray*}
 \tilde{K}_{i,j}^{\beta}(\tau,z)
 &:=&
 \tilde{K}_{i,j,\mathcal{M},p}^{\beta}(\tau,z)
 = \!\!\!\!\!\!\!\!
 \sum_{u,v \in (\Z/p\Z)^{(n,1)}}\left(K_{i,j}^{\beta}
  |_{k,\mathcal{M}[\smat{p}{0}{0}{1}]}
    [\left((0,u),(0,v)\right), 0_n ] \right)(\tau,z).
\end{eqnarray*}

\begin{prop}\label{prop:tilde_kij}
Let the notation be as above. Then
\begin{eqnarray*} 
 E_{k,\mathcal{M}}^{(n)}|V_{\alpha,n-\alpha}(p^2)
 &=&
 \sum_{0\leq i \leq j \leq n}
  \tilde{K}_{i,j}^{\alpha-i-n+j}(\tau,z) ,
\end{eqnarray*}
where
\begin{eqnarray*}
 \tilde{K}_{i,j}^{\alpha-i-n+j}(\tau,z)
 &=&
 p^{-k(2n-i-j+1)+(n-j)(n-i+1)+2n-j }
\\
 &&
 \times
 \sum_{M \in \Gamma(\delta_{i,j})\backslash \Gamma_n}
 \sum_{ \lambda = \left(\begin{smallmatrix}\lambda_1\\ \lambda_2\\ \lambda_3 \end{smallmatrix}\right)\in L_{i,j}^*}
  \left\{1|_{k,\mathcal{M}}([(\lambda,0),0_n],M)\right\}(\tau,z\smat{p}{0}{0}{1})
\\
 &&
 \times
  \sum_{u_2 \in (\Z/p\Z)^{(j-i,1)}}G_{\mathcal{M}}^{j-i,n-\alpha}(\lambda_2 + (0,u_2)) .
\end{eqnarray*}
\end{prop}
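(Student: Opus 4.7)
The first identity follows directly from the derivation at the end of \S5.1: expanding the $V_{\alpha,n-\alpha}(p^2)$-action on $E_{k,\mathcal{M}}^{(n)}$ and applying Lemma~\ref{lemma:hecke_decom} gives $E_{k,\mathcal{M}}^{(n)}|V_{\alpha,n-\alpha}(p^2) = \sum_{0\le i\le j\le n}\sum_{u,v}K_{i,j}^{\alpha-i-n+j}|_{k,\mathcal{M}[\smat{p}{0}{0}{1}]}[((0,u),(0,v)),0_n]$, which is exactly $\sum_{i,j}\tilde K_{i,j}^{\alpha-i-n+j}$ by definition. The substantive task is the explicit formula for $\tilde K_{i,j}^{\alpha-i-n+j}$; note that setting $\beta=\alpha-i-n+j$ gives $j-i-\beta=n-\alpha$, so the Gauss-sum index $n-\alpha$ in the statement matches the $G_{\mathcal{M}}^{j-i,n-\alpha}$ produced by Lemma~\ref{lemma:kij}.

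The plan is to substitute the formula from Lemma~\ref{lemma:kij} into the definition of $\tilde K_{i,j}^\beta$ and carry out the $(u,v)$-summation. Applying $|_{k,\mathcal{M}[\smat{p}{0}{0}{1}]}[((0,u),(0,v)),0_n]$ to a term $\{1|_{k,\mathcal{M}}([(\lambda,0),0_n],M)\}(\tau,z\smat{p}{0}{0}{1})$ shifts $z\smat{p}{0}{0}{1}$ to $z\smat{p}{0}{0}{1}+\tau(0,u)+(0,v)$ and multiplies by the inverse of the factor of automorphy recorded in \S\ref{ss:factors_automorphy}. Exactly as in the proof of Lemma~\ref{lemma:kij}, this shift is reabsorbed into the Heisenberg piece $[(\lambda,0),0_n]$, producing an additive character in $(u,v)$ whose frequency is controlled by the pairing of $\mathcal{M}$ with the rows of $\lambda$ (weighted by the $\smat{p}{0}{0}{1}$ scaling).

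The core step is then to decompose $u=(u_1,u_2,u_3)^t$ and $v=(v_1,v_2,v_3)^t$ along the block partition $\Z^{(i,1)}\oplus\Z^{(j-i,1)}\oplus\Z^{(n-j,1)}$ coming from $\delta_{i,j}$, and evaluate the resulting six character sums. The summations over $v_1,v_2,v_3,u_1,u_3$ are orthogonality sums modulo $p$ (or $p^2$): each vanishes unless the relevant block of $\lambda$ satisfies an integrality condition. These conditions are precisely the ones that enlarge $L_{i,j}$ to $L_{i,j}^*$ --- the condition $\lambda_1\smat{p}{0}{0}{1}^{-1}\in\Z^{(i,2)}$ comes from $v_1$, where the first row of $\lambda$ is paired against the rescaled variable $z\smat{p}{0}{0}{1}$ (hence the $\smat{p}{0}{0}{1}^{-1}$ asymmetry), and $2\lambda_3\mathcal{M}\left(\begin{smallmatrix}0\\1\end{smallmatrix}\right)\in\Z^{(n-j,1)}$ comes from $v_3$, corresponding to pairing with the last column of $\mathcal{M}$. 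The surviving character sums contribute a collective factor $p^{2n-j}$, which combines with the constant of Lemma~\ref{lemma:kij} to give the exponent $-k(2n-i-j+1)+(n-j)(n-i+1)+2n-j$. The remaining $u_2$-summation over $(\Z/p\Z)^{(j-i,1)}$ survives as an honest shift in the argument of the Gauss sum, yielding $\sum_{u_2}G_{\mathcal{M}}^{j-i,n-\alpha}(\lambda_2+(0,u_2))$.

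The main obstacle will be the character-sum bookkeeping: for each of the six components of $(u,v)$, one must identify whether the additive character produced is nontrivial, compute its frequency in terms of $\mathcal{M}$ and $\lambda$, and match the corresponding integrality condition to the defining conditions of $L_{i,j}^*$. The asymmetry encoded in $L_{i,j}^*$, particularly the $\smat{p}{0}{0}{1}^{-1}$ in the $\lambda_1$-condition, is the new feature absent from the index-one calculation in \cite{Ya2}; it reflects the coupling of the two columns of $\lambda$ by $\mathcal{M}$ after the rescaling $z\mapsto z\smat{p}{0}{0}{1}$ that mediates between indices $\mathcal{M}$ and $\mathcal{M}[\smat{p}{0}{0}{1}]$.
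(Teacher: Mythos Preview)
Your overall structure is right and you correctly predict the final exponent $p^{2n-j}$ and the surviving $u_2$-sum in the Gauss factor, but two points in your mechanism are off.

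First, you skip the key group-theoretic step: the Heisenberg element $[((0,u),(0,v)),0_n]$ is applied on the \emph{right} of $M$, while $[(\lambda,0),0_n]$ sits on the \emph{left}. To combine them you must conjugate through $M$, using $(M,[((0,u),(0,v)),0_n])=([((0,u'),(0,v')),0_n],M)$ with $\left(\begin{smallmatrix}u'\\v'\end{smallmatrix}\right)=\smat{D}{-C}{-B}{A}\left(\begin{smallmatrix}u\\v\end{smallmatrix}\right)$. This is not in the proof of Lemma~\ref{lemma:kij}, so your ``exactly as in'' reference is misleading. The paper also interposes the identity $\{1|_{k,\mathcal{M}}([(\lambda,0),0_n],M)\}(\tau,z\smat{p}{0}{0}{1})=\{1|_{k,\mathcal{M}[\smat{p}{0}{0}{1}]}([(\lambda\smat{p}{0}{0}{1}^{-1},0),0_n],M)\}(\tau,z)$ to make the Heisenberg pieces live at the same index before combining.

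Second, your attribution of which sums yield which conditions is wrong. After conjugation, the $v'$-sum is a single $n$-dimensional character sum $\sum_{v'}e(2\mathcal{M}\,{}^t\lambda(0,v'))$; since $\lambda_1\in(p\Z)^{(i,2)}$ and $\lambda_2\in\Z^{(j-i,2)}$ already force integrality in those blocks, it contributes $p^n$ together with only the condition $2\lambda_3\mathcal{M}\left(\begin{smallmatrix}0\\1\end{smallmatrix}\right)\in\Z^{(n-j,1)}$. The $u'$-sum is \emph{not} an orthogonality sum: it is a shift $\lambda\mapsto\lambda+(0,u')$. The $u_1$-block is absorbed into $\lambda_1$, enlarging its second column from $p\Z$ to $\Z$ --- this is where the $L_{i,j}^*$-condition $\lambda_1\smat{p}{0}{0}{1}^{-1}\in\Z^{(i,2)}$ actually comes from, not from $v_1$ as you claim. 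The $u_3$-block is absorbed into $\lambda_3$ without enlarging its range, producing an overcount factor $p^{n-j}$. Only $u_2$ survives, shifting the argument of $G_{\mathcal{M}}^{j-i,n-\alpha}$. So the split is $p^n\cdot p^{n-j}=p^{2n-j}$, but for different reasons than you give.
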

\begin{proof}
From the definition of $\tilde{K}_{i,j}^{\beta}$ and Lemma~\ref{lemma:kij} we obtain
\begin{eqnarray*}
 &&
 \tilde{K}_{i,j}^{\alpha-i-n+j}(\tau,z)
\\
 &=&
 p^{-k(2n-i-j+1)+(n-j)(n-i+1)}
 \sum_{M \in \Gamma(\delta_{i,j})\backslash \Gamma_n}
 \sum_{\lambda = \left(\begin{smallmatrix}\lambda_1\\ \lambda_2\\ \lambda_3 \end{smallmatrix}\right)\in L_{i,j}}
 G_{\mathcal{M}}^{j-i,n-\alpha}(\lambda_2)
\\
 &&
 \times \!\!\!
 \sum_{u,v \in (\Z/p\Z)^{(n,1)}}
 \left(1|_{k,\mathcal{M}}([(\lambda,0),0_n],M)(\tau,z\smat{p}{0}{0}{1})\right)
  |_{k,\mathcal{M}[\smat{p}{0}{0}{1}]}[\left((0,u),(0,v)\right), 0_n],
\end{eqnarray*}
where, in the second summation, $\lambda_1 \in \Z^{(i,2)}$, $\lambda_2 \in \Z^{(j-i,2)}$, $\lambda_3 \in \Z^{(n-j,2)}$ and
$\lambda \in L_{i,j}$.

By a straightforward calculation we have
\begin{eqnarray*}
 1|_{k,\mathcal{M}}([(\lambda,0),0_n],M)(\tau,z\smat{p}{0}{0}{1})
 &=&
 1|_{k,\mathcal{M}[\smat{p}{0}{0}{1}]}([(\lambda\smat{p}{0}{0}{1}^{-1},0),0_n],M)(\tau,z).
\end{eqnarray*}
Hence the last summation of the above identity is~
\begin{eqnarray*}
 && \!\!\!\!\!\!\!
 \sum_{u,v \in (\Z/p\Z)^{(n,1)}}
 \!\!\!
 \left\{
  1|_{k,\mathcal{M}}([(\lambda,0),0_n],M)(\tau,z\smat{p}{0}{0}{1})
   |_{k,\mathcal{M}[\smat{p}{0}{0}{1}]}[\left((0,u),(0,v)\right), 0_n]
 \! \right\}\! (\tau,z)
\\
 &=&
 \sum_{u',v' \in (\Z/p\Z)^{(n,1)}}
 \left\{1|_{k,\mathcal{M}[\smat{p}{0}{0}{1}]}([(\lambda\smat{p}{0}{0}{1}^{-1}+(0,u'),(0,v')),0_n],M)\right\}
 (\tau,z)
\\
 &=&
 \sum_{u',v' \in (\Z/p\Z)^{(n,1)}}
 \left\{1|_{k,\mathcal{M}}([(\lambda+(0,u'),(0,v')),0_n],M)\right\}
 (\tau,z\smat{p}{0}{0}{1})
\\
 &=&
 \sum_{u' \in (\Z/p\Z)^{(n,1)}} \!\!\!
 \left\{1|_{k,\mathcal{M}}([(\lambda+(0,u'),0),0_n],M)\right\}
 (\tau,z\smat{p}{0}{0}{1}) \!\!\!\!\!
 \sum_{v' \in (\Z/p\Z)^{(n,1)}} \!\!\!\!\!\! e(2 \mathcal{M} {^t\lambda}(0, v')),
\end{eqnarray*}
where, in the first identity, we used
\begin{eqnarray*}
 (M,[((0,u),(0,v)),0_n])
 &=&
 ([((0,u'),(0,v')),0_n],M)
\end{eqnarray*}
with
$\left(\begin{smallmatrix} u' \\ v'\end{smallmatrix}\right)
  =
 \smat{D}{-C}{-B}{A}
 \left(\begin{smallmatrix} u \\ v \end{smallmatrix} \right)$
for any $M = \smat{A}{B}{C}{D} \in \Gamma_n$.
Now, for $\lambda = \left(\begin{smallmatrix}\lambda_1\\ \lambda_2 \\ \lambda_3 \end{smallmatrix}\right) \in L_{i,j}$
we have
\begin{eqnarray*}
 \sum_{v' \in (\Z/p\Z)^{(n,1)}} e(2 \mathcal{M} {^t\lambda} (0,v'))
 &=&
 \begin{cases}
  p^n & \mbox{if } 2 \lambda_3 \mathcal{M}\left(\begin{smallmatrix}0 \\ 1 \end{smallmatrix}\right) \in \Z^{(n-j,1)},
 \\
  0   & \mbox{otherwise}.
 \end{cases}
\end{eqnarray*}
Therefore
\begin{eqnarray*}
 &&
 \tilde{K}_{i,j}^{\alpha-i-n+j}(\tau,z)
\\
 &=&
 p^{-k(2n-i-j+1)+(n-j)(n-i+1)+n}
 \sum_{M \in \Gamma(\delta_{i,j})\backslash \Gamma_n}
 \sum_{\begin{smallmatrix}
        \lambda = \left(\begin{smallmatrix}\lambda_1\\ \lambda_2\\ \lambda_3 \end{smallmatrix}\right)\in L_{i,j} \\
        2 \lambda_3 \mathcal{M} \left(\begin{smallmatrix} 0\\1 \end{smallmatrix}\right) \in \Z^{(n-j,1)}
       \end{smallmatrix}}
 G_{\mathcal{M}}^{j-i,n-\alpha}(\lambda_2)
\\
 &&
 \times
 \sum_{u \in (\Z/p\Z)^{(n,1)}}
  \left\{1|_{k,\mathcal{M}}([(\lambda+(0,u),0),0_n],M)\right\}(\tau,z\smat{p}{0}{0}{1}).
\end{eqnarray*}
Thus
\begin{eqnarray*}
 &&
 \tilde{K}_{i,j}^{\alpha-i-n+j}(\tau,z)
\\
 &=&
 p^{-k(2n-i-j+1)+(n-j)(n-i+1)+n }
 \sum_{M \in \Gamma(\delta_{i,j})\backslash \Gamma_n}
 \sum_{ \lambda = \left(\begin{smallmatrix}\lambda_1\\ \lambda_2\\ \lambda_3 \end{smallmatrix}\right)\in L_{i,j}^*}
\\
 &&
 \times
  \left\{1|_{k,\mathcal{M}}([(\lambda,0),0_n],M)\right\}(\tau,z\smat{p}{0}{0}{1})
\\
 &&
 \times
  p^{n-j}\sum_{u_2 \in (\Z/p\Z)^{(j-i,1)}}G_{\mathcal{M}}^{j-i,n-\alpha}(\lambda_2 + (0,u_2)),
\end{eqnarray*}
where $L_{i,j}^*$ is defined as before.
\end{proof}

\section{Proof of Theorem~\ref{thm:deg2}}\label{s:thm:deg2}
The purpose of this section is to give the proof of Theorem~\ref{thm:deg2}.
From Proposition \ref{prop:tilde_kij} we recall
\begin{eqnarray*}
 E_{k,\mathcal{M}}^{(1)}|V_{\alpha,1-\alpha}(p^2)
 &=&
 \sum_{0\leq i \leq j \leq 1}
  \tilde{K}_{i,j}^{\alpha-i-1+j}.
\end{eqnarray*}
Hence
\begin{eqnarray*}
 E_{k,\mathcal{M}}^{(1)}|V_{1,0}(p^2)
 &=&
 \tilde{K}_{1,1}^{0}
 +
 \tilde{K}_{0,1}^{1}
 +
 \tilde{K}_{0,0}^{0}.
\end{eqnarray*}

\subsection{Calculation of $\tilde{K}_{i,j}^{\beta}$ for $n=1$}
In this subsection we calculate $\tilde{K}_{1,1}^{0}$, $\tilde{K}_{0,1}^{1}$ and $\tilde{K}_{0,0}^{0}$.
\begin{lemma}\label{lemma:E_V1}
For $\mathcal{M} \in \mbox{Sym}_2^+$ let
$D_0$ be the discriminant of $\Qq(\sqrt{-\det(2\mathcal{M})})$.
Let $f$ be the positive integer such that $-\det(2\mathcal{M}) = D_0 f^2$.
Then, for any prime $p$ we obtain
\begin{eqnarray*}
 \tilde{K}_{1,1}^{0}(\tau,z) &=&
 p^{-k+1}E_{k,\mathcal{M}[\smat{p}{ }{ }{1} ]}^{(1)}(\tau,z) ,\\
 \tilde{K}_{0,1}^{1}(\tau,z) &=&
 -p^{-2k+2} \left(\frac{D_0 f^2}{p}\right) E_{k,\mathcal{M}[\smat{p}{ }{ }{1}]}^{(1)}(\tau,z) \\
 &&
 +p^{-2k+2} \left(\frac{D_0 f^2}{p}\right) E_{k,\mathcal{M}}^{(1)}(\tau,z\smat{p}{ }{ }{1}) ,\\
 \tilde{K}_{0,0}^{0}(\tau,z) &=&
 \begin{cases}
  p^{-3k+4} E_{k,\mathcal{M}[X^{-1}{\smat{p}{ }{ }{1}}^{-1} ]}^{(1)}(\tau,z\smat{p}{ }{ }{1} {^t X} \smat{p}{ }{ }{1}) & \mbox{ if } p|f, \\
  p^{-3k+4} E_{k,\mathcal{M}}^{(1)}(\tau,z\smat{p}{ }{ }{1}) & \mbox{ if } p {\not|} f,
 \end{cases}
\end{eqnarray*}
where, in the case $p|f$, $X = \smat{1}{0}{x}{1} $ is a matrix such that $\mathcal{M}[X^{-1}\smat{p}{ }{ }{1}^{-1}] \in \mbox{Sym}_2^+$.
\end{lemma}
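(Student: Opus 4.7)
The plan is to apply Proposition~\ref{prop:tilde_kij} with $n=1$ and $\alpha=1$, which yields
$E_{k,\mathcal{M}}^{(1)}|V_{1,0}(p^2) = \tilde{K}_{1,1}^{0} + \tilde{K}_{0,1}^{1} + \tilde{K}_{0,0}^{0}$,
and then to evaluate each summand by analysing the relevant lattice $L_{i,j}^{*}$ and the Gauss sum. In all three cases $\delta_{i,j}$ is scalar, so $\Gamma(\delta_{i,j}) = \Gamma_{\infty}^{(1)}$ and the outer $M$-sum in Proposition~\ref{prop:tilde_kij} coincides with the Eisenstein sum defining $E_{k,\cdot}^{(1)}$. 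Throughout I shall use the reparametrization identity
\begin{eqnarray*}
 1|_{k,\mathcal{M}}([(\lambda A,0),0_1],M)(\tau,zA) &=& 1|_{k,\mathcal{M}[{^tA}]}([(\lambda,0),0_1],M)(\tau,z),
\end{eqnarray*}
valid for any invertible $A \in \mbox{GL}(2,\R)$, which is a direct consequence of the cocycle property of $J_{k,\mathcal{M}}$ and trace cyclicity. Setting $W := \smat{p}{0}{0}{1}$, the symmetry ${^tW} = W$ makes the case $A=W$ particularly convenient.

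For $\tilde{K}_{1,1}^{0}$ the Gauss sum factor is trivial (since $j-i=0$) and the lattice is $L_{1,1}^{*} = \{(a,b) : a \in p\Z,\ b \in \Z\} = \Z^{(1,2)}\cdot W$. Substituting $\lambda = \lambda' W$ and applying the reparametrization immediately yields $\tilde{K}_{1,1}^{0}(\tau,z) = p^{-k+1}E_{k,\mathcal{M}[W]}^{(1)}(\tau,z)$. For $\tilde{K}_{0,1}^{1}$ one has $L_{0,1}^{*} = \Z^{(1,2)}$ and the entire content is the computation of $\sum_{u_2\in\Z/p\Z} G_{\mathcal{M}}^{1,0}(\lambda_2+(0,u_2))$ for $\lambda_2=(a,b)$. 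For odd $p$, completing the square in $u_2$ reduces this double sum to a product of classical quadratic Gauss sums, and using $G_{p}^{2} = \left(\frac{-1}{p}\right)p$ together with $m = 4l - r^{2} = \det(2\mathcal{M})$ gives
\begin{eqnarray*}
 \sum_{u_2\in\Z/p\Z} G_{\mathcal{M}}^{1,0}(\lambda_2+(0,u_2)) &=& \left(\frac{-m}{p}\right) p\, \delta(p\nmid a);
\end{eqnarray*}
a short direct computation yields the same formula for $p=2$. Splitting the $\lambda_2$-sum as $\Z^{(1,2)} = \{p\nmid a\}\sqcup\{a\in p\Z\}$ and applying the reparametrization to the $a\in p\Z$ piece (and recalling $-m = D_{0}f^{2}$) produces the stated expression for $\tilde{K}_{0,1}^{1}$.

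For $\tilde{K}_{0,0}^{0}$ neither a Gauss sum nor a $u_2$-sum appears, so the entire content lies in the description of $L_{0,0}^{*}$. Writing $\lambda_3 = (\alpha/p,\beta/p)$ with $\alpha,\beta\in\Z$, the defining conditions read $\alpha r + 2\beta \equiv 0 \pmod p$ and $l\alpha^{2} + r\alpha\beta + \beta^{2} \equiv 0 \pmod{p^{2}}$. The algebraic identity
$4(l\alpha^{2} + r\alpha\beta + \beta^{2}) = (2\beta+r\alpha)^{2} + m\alpha^{2}$
combined with the linear condition reduces the quadratic condition (for odd $p$) to $m\alpha^{2} \equiv 0 \pmod{p^{2}}$. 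If $p\nmid f$ then $v_{p}(m)\leq 1$, forcing $\alpha,\beta\in p\Z$, so $L_{0,0}^{*} = \Z^{(1,2)}$ and the reparametrization gives $\tilde{K}_{0,0}^{0}(\tau,z) = p^{-3k+4}E_{k,\mathcal{M}}^{(1)}(\tau,zW)$. If $p\mid f$ then $p^{2}\mid m$ and the quadratic condition is automatic, so $L_{0,0}^{*}$ is the index-$p$ over-lattice of $\Z^{(1,2)}$ generated over $\Z$ by $(1/p,x_{0}/p)$ and $(0,1)$ for any integer $x_{0}$ with $2x_{0} + r \equiv 0 \pmod p$; equivalently $L_{0,0}^{*} = \Z^{(1,2)}\cdot (W^{-1}{^tX}^{-1})$ with $X = \smat{1}{0}{x}{1}$ and $x = -x_{0}$. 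A short calculation using the identity $4(l-rx+x^{2}) = m + p^{2}((2x-r)/p)^{2}$ shows that for any such $x$ the matrix $\mathcal{M}[X^{-1}W^{-1}]$ is automatically a half-integral positive definite matrix of shape $\smat{*}{*}{ *}{1}$, the parities of $r$ and of $m \!\!\mod 4$ conspiring to make the mod-$4$ condition hold for free. Applying the reparametrization with $A = W^{-1}{^tX}^{-1}$ (so that ${^tA} = X^{-1}W^{-1}$) and adjusting variables by $z \mapsto zW{^tX}W$ then gives $\tilde{K}_{0,0}^{0}(\tau,z) = p^{-3k+4}E_{k,\mathcal{M}[X^{-1}W^{-1}]}^{(1)}(\tau,zW{^tX}W)$, as claimed.

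The main obstacle is the case $p\mid f$ in $\tilde{K}_{0,0}^{0}$: one must pin down $L_{0,0}^{*}$ exactly as $\Z^{(1,2)}\cdot W^{-1}{^tX}^{-1}$ and verify in parallel that $x$ can be chosen so that $\mathcal{M}[X^{-1}W^{-1}]$ is a half-integral positive definite matrix of the required shape, which rests on $p^{2}\mid m$ and on the compatibility between the parity of $(2x-r)/p$ and $m\!\!\mod 4$. The secondary technical difficulty is that the prime $p=2$ is not invertible in the completing-the-square manipulations, requiring separate small direct computations both in the Gauss-sum step for $\tilde{K}_{0,1}^{1}$ and in the mod-$p^{2}$ analysis for $\tilde{K}_{0,0}^{0}$.
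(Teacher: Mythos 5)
Your proposal is correct and follows essentially the same route as the paper: the decomposition $E_{k,\mathcal{M}}^{(1)}|V_{1,0}(p^2)=\tilde{K}_{1,1}^{0}+\tilde{K}_{0,1}^{1}+\tilde{K}_{0,0}^{0}$, the identification of each $L_{i,j}^{*}$, the evaluation of $\sum_{u}G_{\mathcal{M}}^{1,0}(\lambda+(0,u))$, and the reparametrization $\lambda\mapsto\lambda A$ absorbing the lattice into the index of the Eisenstein series. The only differences are presentational (you evaluate the Gauss sum by completing the square and describe $L_{0,0}^{*}$ via the identity $4(l\alpha^{2}+r\alpha\beta+\beta^{2})=(2\beta+r\alpha)^{2}+m\alpha^{2}$, where the paper diagonalizes $\mathcal{M}$ modulo $p$ and lists normal forms at $p=2$), and both routes handle $p=2$ by the same kind of separate direct check.
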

\begin{proof}
By the definition of $\tilde{K}_{i,j}^{\alpha-i-n+j}$ we have
\begin{eqnarray*}
 \tilde{K}_{1,1}^{0} &=&
 p^{-k+1}\sum_{M \in \Gamma(\delta_{1,1})\backslash \Gamma_1}
 \sum_{ \lambda_1 \in L_{1,1}^*}
 \left\{1|_{k,\mathcal{M}}([(\lambda_1,0),0],M)\right\}(\tau,z\smat{p}{0}{0}{1})\\
 &=&
 p^{-k+1}\sum_{M \in \Gamma_{\infty}^{(1)}\backslash \Gamma_1}
 \sum_{ \lambda_1 \in p\Z \times \Z}
 \left\{1|_{k,\mathcal{M}}([(\lambda_1,0),0],M)\right\}(\tau,z\smat{p}{0}{0}{1})\\
 &=&
 p^{-k+1}\sum_{M \in \Gamma_{\infty}^{(1)}\backslash \Gamma_1}
 \sum_{ \lambda \in \Z^{(1,2)}}
 \left\{1|_{k,\mathcal{M}}([(\lambda \smat{p}{ }{ }{1} ,0),0],M)\right\}(\tau,z\smat{p}{0}{0}{1})\\
 &=&
 p^{-k+1}\sum_{M \in \Gamma_{\infty}^{(1)}\backslash \Gamma_1}
 \sum_{ \lambda \in \Z^{(1,2)}}
 \left\{1|_{k,\mathcal{M}[\smat{p}{ }{ }{1} ]}([(\lambda,0),0],M)\right\}(\tau,z)\\
 &=&
 p^{-k+1} E_{k,\mathcal{M}[\smat{p}{ }{ }{1} ]}^{(1)}(\tau,z).
\end{eqnarray*}
Now we shall calculate $\tilde{K}_{0,1}^{1}$. First, for any $\lambda \in \Z^{(1,2)}$ we have
\begin{eqnarray*}
 G_{\mathcal{M}}^{1,0}(\lambda)
 &=&
 \sum_{\begin{smallmatrix} x \in \Z/p\Z \\ rank_p(x) = 1 \end{smallmatrix}} e\left(\frac{1}{p} \mathcal{M} {^t \lambda}x \lambda\right) \\
 &=&
 \begin{cases}
  p-1 & \mbox{ if } \lambda \mathcal{M} {^t \lambda} \equiv 0 \mod p , \\
  -1  & \mbox{ if } \lambda \mathcal{M} {^t \lambda} \not\equiv 0 \mod p .
 \end{cases}
\end{eqnarray*}
Hence we have
\begin{eqnarray*}
 \sum_{u \in \Z/p\Z} G_{\mathcal{M}}^{1,0}(\lambda + (0,u))
 &=&
 \begin{cases}
  0 & \mbox{ if } \lambda \in p\Z \times \Z ,\\
  \left(\frac{ D_0 f^2 }{p}\right) p & \mbox{ if } \lambda \not\in p\Z \times \Z.
 \end{cases}
\end{eqnarray*}
Thus
\begin{eqnarray*}
 \tilde{K}_{0,1}^{1}
 &=&
 p^{-2k+2}\sum_{M \in \Gamma(\delta_{0,1})\backslash \Gamma_1}
 \sum_{ \lambda_2 \in L_{0,1}^*}
 \left\{1|_{k,\mathcal{M}}([(\lambda_2,0),0],M)\right\}(\tau,z\smat{p}{0}{0}{1})\\
 && \times
 \sum_{u_2 \in \Z/p\Z}G_{\mathcal{M}}^{1,0}(\lambda_2 + (0,u_2)) \\
 &=&
 -p^{-2k+2} \left(\frac{D_0 f^2}{p} \right)\sum_{M \in \Gamma_{\infty}^{(1)}\backslash \Gamma_1}
 \sum_{ \lambda_2 \in p\Z \times \Z}
 \left\{1|_{k,\mathcal{M}}([(\lambda_2,0),0],M)\right\}(\tau,z\smat{p}{0}{0}{1}) \\
 &&
 +p^{-2k+2} \left(\frac{D_0 f^2}{p} \right)\sum_{M \in \Gamma_{\infty}^{(1)}\backslash \Gamma_1}
 \sum_{ \lambda_2 \in \Z \times \Z}
 \left\{1|_{k,\mathcal{M}}([(\lambda_2,0),0],M)\right\}(\tau,z\smat{p}{0}{0}{1}) .
\end{eqnarray*}
Finally, we shall calculate $\tilde{K}_{0,0}^{0}$.
We have
\begin{eqnarray*}
 L_{0,0}^*
 &=&
 \left\{
  \lambda_3 \in (p^{-1}\Z)^{(1,2)} \, | \,
   \lambda_3 \mathcal{M} {^t \lambda_3} \in \Z,\
   2 \lambda_3 \mathcal{M} \left(\begin{smallmatrix}0\\1 \end{smallmatrix}\right) \in \Z
 \right\}.
\end{eqnarray*}
We need to consider two cases: the case $p$ is an odd prime and the case $p=2$.
When $p$ is an odd prime, there exists a matrix $X = \smat{1}{ }{x}{1} \in \Z^{(2,2)}$ such that
the matrix $\mathcal{M}$ has an expression $\mathcal{M} \equiv {^t X}\smat{4^{-1}|D_0|f^2}{ }{ }{1} X \mod p$.
We have
\begin{eqnarray*}
 L_{0,0}^*
 &=&
 \begin{cases}
 \left\{\lambda_3 \in (p^{-1}\Z)^{(1,2)} \, | \, \lambda_3 {^t X} \in \frac{1}{p}\Z \times \Z \right\}
 & \mbox{ if } p|f ,\\
 \Z^{(1,2)}
 & \mbox{ if } p {\not|} f .
 \end{cases}
\end{eqnarray*}
Thus, if $p|f$, then
\begin{eqnarray*}
 \tilde{K}_{0,0}^{0}
 &=&
 p^{-3k+4} \sum_{M \in \Gamma(\delta_{0,0})\backslash \Gamma_1}
 \sum_{\begin{smallmatrix} \lambda_3 \\ \lambda_3 {^t X} \in \frac{1}{p}\Z \times \Z \end{smallmatrix}}
 \left\{1|_{k,\mathcal{M}}([(\lambda_3,0),0],M)\right\}(\tau,z\smat{p}{0}{0}{1}) \\
 &=&
 p^{-3k+4} \sum_{M \in \Gamma_{\infty}^{(1)}\backslash \Gamma_1}
 \sum_{\begin{smallmatrix} \lambda_3 \\ \lambda_3 {^t X} \smat{p}{ }{ }{1} \in \Z^{(1,2)} \end{smallmatrix}} \\
 && \times
 \left\{1|_{k,\mathcal{M}[X^{-1}\smat{p}{ }{ }{1}^{-1}]}([(\lambda_3 {^t X} \smat{p}{ }{ }{1},0),0],M)\right\}(\tau,z\smat{p}{ }{ }{1} {^t X} \smat{p}{ }{ }{1}) \\
 &=&
 p^{-3k+4} E_{k,\mathcal{M}[X^{-1}\smat{p}{ }{ }{1}^{-1}]}^{(1)}(\tau,z \smat{p}{ }{ }{1} {^t X} \smat{p}{ }{ }{1}),
\end{eqnarray*}
and, if $p {\not|} f$, then
\begin{eqnarray*}
  \tilde{K}_{0,0}^{0}
 &=&
 p^{-3k+4} \sum_{M \in \Gamma(\delta_{0,0})\backslash \Gamma_1}
 \sum_{\lambda_3  \in \Z^{(1,2)} }
 \left\{1|_{k,\mathcal{M}}([(\lambda_3,0),0],M)\right\}(\tau,z\smat{p}{0}{0}{1})\\
 &=&
 p^{-3k+4} E_{k,\mathcal{M}}^{(1)}(\tau,z \smat{p}{ }{ }{1}).
\end{eqnarray*}
When $p=2$, there exist a matrix $X = \smat{1}{ }{x}{1} \in \Z^{(2,2)} $ and an integer $u$
such that the matrix $\mathcal{M}$ equals one of the following three forms:
\begin{eqnarray*}
 && {^t X} \smat{u}{0}{0}{1} X \mbox{ with } u \equiv 0,1,2 \!\!\! \mod 4, \\
 && {^t X} \smat{u}{1}{1}{1} X \mbox{ with } u \equiv 0 \!\!\! \mod 4, \mbox{ or } \\
 && {^t X} \smat{u}{\frac12}{\frac12}{1} X.
\end{eqnarray*}
If $2|f$, then $\mathcal{M} = {^t X} \smat{u}{0}{0}{1} X$ with $u \equiv 0 \mod 4$ or
$\mathcal{M} = {^t X} \smat{u}{1}{1}{1} X$ with $u \equiv 0 \mod 4$.
By a straightforward calculation we have
\begin{eqnarray*}
 L_{0,0}^*
 &=&
 \begin{cases}
 \left\{\lambda_3 \in (2^{-1}\Z)^{(1,2)} \, | \, \lambda_3 {^t X} \in \frac{1}{2}\Z \times \Z \right\}
 & \mbox{ if } 2|f ,\\
 \Z^{(1,2)} & \mbox{ if } 2 {\not|} f,
 \end{cases}
\end{eqnarray*}
where $X = \smat{1}{ }{x}{1}$ is a matrix such that $\mathcal{M} = {^t X} \smat{u}{0}{0}{1} X$
with $u \equiv 0 \mod 4$
or $ = {^t X} \smat{u}{1}{1}{1} X$ with $u \equiv 0 \mod 4$.
Thus, if $2|f$, then
\begin{eqnarray*}
 \tilde{K}_{0,0}^{0}
 &=&
 2^{-3k+4} \sum_{M \in \Gamma_{\infty}^{(1)}\backslash \Gamma_1}
 \sum_{\begin{smallmatrix} \lambda_3 \\ \lambda_3 {^t X} \smat{2}{ }{ }{1} \in \Z^{(1,2)} \end{smallmatrix}} \\
 && \times
 \left\{1|_{k,\mathcal{M}[X^{-1}\smat{2}{ }{ }{1}^{-1}]}([(\lambda_3 {^t X} \smat{2}{ }{ }{1},0),0],M)\right\}(\tau,z\smat{2}{ }{ }{1} {^t X} \smat{2}{ }{ }{1}) \\
 &=&
 2^{-3k+4} E_{k,\mathcal{M}[X^{-1}\smat{2}{ }{ }{1}^{-1}]}^{(1)}(\tau,z \smat{2}{ }{ }{1} {^t X} \smat{2}{ }{ }{1}).
\end{eqnarray*}
And, if $2 {\not|} f$, then
\begin{eqnarray*}
  \tilde{K}_{0,0}^{0}
 &=&
 2^{-3k+4} \sum_{M \in \Gamma(\delta_{0,0})\backslash \Gamma_1}
 \sum_{\lambda_3  \in \Z^{(1,2)} }
 \left\{1|_{k,\mathcal{M}}([(\lambda_3,0),0],M)\right\}(\tau,z\smat{2}{0}{0}{1})\\
 &=&
 2^{-3k+4} E_{k,\mathcal{M}}^{(1)}(\tau,z \smat{2}{ }{ }{1}).
\end{eqnarray*}
Hence we obtain the formula for $\tilde{K}_{0,0}^{0}$.

Therefore we conclude the lemma.
\end{proof}

\subsection{Proof of Theorem \ref{thm:deg2}}\label{ss:pf:deg2}
In this subsection we conclude the proof of Theorem \ref{thm:deg2}.
We recall $\mathcal{M} = \smat{*}{*}{ * }{1} \in \mbox{Sym}_2^+$.
We put $m = \det(2\mathcal{M})$.

We define $E_{k,m}^{(1)} := \iota_{\mathcal{M}}(E_{k,\mathcal{M}}^{(1)})$,
where the map $\iota_{\mathcal{M}}$ is defined in \S\ref{ss:iota}.
We remark that $E_{k,m}^{(1)}$ is well-defined,
namely, if $\mathcal{N} = \smat{*}{*}{ * }{1} \in \mbox{Sym}_2^+$ and $\det(2\mathcal{N}) = m$,
then $\iota_{\mathcal{N}}(E_{k,\mathcal{N}}^{(1)}) = E_{k,m}^{(1)}$.
This fact follows from Proposition \ref{prop:iota_U} and from the fact that
there exits a matrix $X = \smat{1}{ }{x}{1}$ such that $\mathcal{N} = \mathcal{M}[X]$.

The form $e_{k,m}^{(1)} \in J_{k-\frac12,m}^{(1)*}$ was defined as the Fourier-Jacobi coefficient of
generalized Cohen-Eisenstein series of degree $2$ (cf. \S\ref{s:introduction}), and 
due to Lemma \ref{lemma:iota}, we have
$e_{k,m}^{(1)} = \iota_{M}(e_{k,\mathcal{M}}^{(1)})$.
For the definition of $e_{k,\mathcal{M}}^{(1)}$,
see \S\ref{s:fourier_matrix}.

Now, by Proposition \ref{prop:fourier_jacobi} and Proposition \ref{prop:iota_hecke}, we have
\begin{eqnarray*}
 e_{k,m}^{(1)}|V_{p}^{(1)}
 &=&
 e_{k,m}^{(1)}|\tilde{V}_{1,0}(p^2) \\
 &=&
 p^{3k-4}\iota_{\mathcal{M}[\smat{p}{ }{ }{1}]}(e_{k,\mathcal{M}}^{(1)}|V_{1,0}(p^2))\\
 &=&
 p^{3k-4} \sum_{d|f}g_k\!\left(\frac{f}{d^2}\right)\ \iota_{\mathcal{M}[\smat{p}{ }{ }{1}]}\left(E_{k,\mathcal{M}[{^t W_d}^{-1}]}^{(1)}(\tau,z W_d)|V_{1,0}(p^2)\right),
\end{eqnarray*}
where the symbols $f$ and $W_d$ are the same ones in Proposition \ref{prop:fourier_jacobi}.
By the definition of index-shift maps we have
\begin{eqnarray*}
 E_{k,\mathcal{M}[{^t W_d}^{-1}]}^{(1)}(\tau,z W_d)|V_{1,0}(p^2)
 &=&
 \left(E_{k,\mathcal{M}[{^t W_d}^{-1}]}^{(1)}|V_{1,0}(p^2)\right) (\tau,z W_d).
\end{eqnarray*}
The form $E_{k,\mathcal{M}[{^t W_d}^{-1}]}^{(1)}|V_{1,0}(p^2)$ is a linear combination of Jacobi-Eisenstein series
of matrix index (cf. Lemma \ref{lemma:E_V1}.)

Due to Proposition \ref{prop:iota_U} we have
$
 \iota_{\mathcal{M}[\smat{p}{0}{0}{1}]}(E_{k,\mathcal{M}}^{(n)}(*,*\smat{p}{0}{0}{1}))(\tau,z)
 =
 E_{k,m}^{(n)}(\tau,pz)
$
and
$
 \iota_{\mathcal{M}[\smat{p}{0}{0}{1}]}
 \left(E_{k,\mathcal{M}\left[X^{-1}\smat{p}{0}{0}{1}^{-1}\right]}^{(n)}(*,* \smat{p}{0}{0}{1} {^t X} \smat{p}{0}{0}{1})\right)(\tau,z)
 =
 E_{k,\frac{m}{p^2}}^{(n)}(\tau,p^2 z).
$
By using these identities and due to Lemma \ref{lemma:E_V1}, we obtain
\begin{eqnarray*}
 &&
 \iota_{\mathcal{M}[{^t W_d}^{-1} \smat{p}{ }{ }{1}]}\left(E_{k,\mathcal{M}[{^t W_d}^{-1}]}^{(1)}|V_{1,0}(p^2) \right) \\
 &=&
 p^{-k+1} E_{k,\frac{mp^2}{d^2}}^{(1)}(\tau,dz)
 - p^{-2k+2} \left(\frac{D_0 f^2/d^2}{p}\right) E_{k,\frac{mp^2}{d^2}}^{(1)}(\tau,dz)\\
 &&
 + p^{-2k+2} \left(\frac{D_0 f^2/d^2}{p}\right) E_{k,\frac{m}{d^2}}^{(1)}(\tau,pdz)\\
 &&
 + \delta\!\left(p|\frac{f}{d}\right) p^{-3k+4} E_{k,\frac{m}{p^2d^2}}^{(1)}(\tau,p^2 d z) 
 + \delta\!\left(p {\not|} \frac{f}{d}\right) p^{-3k+4} E_{k,\frac{m}{d^2}}^{(1)}(\tau,pdz),
\end{eqnarray*}
where $\delta(\mathcal{S}) = 1$ or $0$ accordingly as the statement $\mathcal{S}$ is true or false,
and where $D_0$ is the discriminant of $\mathbb{Q}(\sqrt{-m})$.
Hence
\begin{eqnarray*}
 && e_{k,m}^{(1)}|V_{p}^{(1)} \\
 &=& p^{2k-3} \sum_{d|f}g_{k}\!\left(\frac{m}{d^2}\right)E_{k,\frac{mp^2}{d^2}}^{(1)}(\tau,dz)
     - p^{k-2} \sum_{d|f}g_{k}\!\left(\frac{m}{d^2}\right)\left(\frac{D_0 f^2/d^2}{p}\right) E_{k,\frac{mp^2}{d^2}}^{(1)}(\tau,dz) \\
 && + p^{k-2} \sum_{d|f}g_{k}\!\left(\frac{m}{d^2}\right) \left(\frac{D_0 f^2/d^2}{p}\right) E_{k,\frac{m}{d^2}}^{(1)}(\tau,pdz)\\
 && + \sum_{d|f} \delta\!\left(p|\frac{f}{d}\right) g_{k}\!\left(\frac{m}{d^2}\right) E_{k,\frac{m}{p^2d^2}}^{(1)}(\tau,p^2 dz)
    + \sum_{d|f}\delta\!\left(p {\not|} \frac{f}{d}\right) g_{k}\!\left(\frac{m}{d^2}\right) E_{k,\frac{m}{d^2}}^{(1)}(\tau,pdz).
\end{eqnarray*}
Because of Lemma \ref{lemma:gk} we obtain
\begin{eqnarray*}
 &&
 e_{k,m}^{(1)}|V_{p}^{(1)}  \\
 &=&
 \sum_{d|f} g_k\!\left(\frac{mp^2}{d^2}\right) E_{k,\frac{mp^2}{d^2}}^{(1)}(\tau,dz)
 + p^{k-2} \left(\frac{D_0}{p}\right)
      \sum_{\begin{smallmatrix}d|f \\ \frac{f}{d}\not\equiv 0\!\! \mod p\end{smallmatrix}} g_k\!\left(\frac{m}{d^2}\right) E_{k,\frac{m}{d^2}}^{(1)}(\tau,pdz) \\
 &&
 + \delta(p|f) \!\! 
    \sum_{\begin{smallmatrix} d|f \\ \frac{f}{d} \equiv 0 \!\! \mod p \end{smallmatrix}} \!\! g_k\!\left(\frac{m}{d^2}\right) E_{k,\frac{m}{d^2p^2}}^{(1)}(\tau,p^2dz) 
 +  \!\!\!\! \sum_{\begin{smallmatrix} d|f \\ \frac{f}{d}\not\equiv 0\!\! \mod p \end{smallmatrix}} \!\! g_k\!\left(\frac{m}{d^2}\right) E_{k,\frac{m}{d^2}}^{(1)}(\tau,pdz).
\end{eqnarray*}
By using Lemma \ref{lemma:gk} again, we have
\begin{eqnarray*}
 &&
 e_{k,m}^{(1)}|V_{p}^{(1)} \\
 &=&
 \sum_{d|f} g_k\!\left(\frac{mp^2}{d^2}\right) E_{k,\frac{mp^2}{d^2}}^{(1)}(\tau,dz)
 + p^{k-2} \left(\frac{D_0}{p}\right)
      \sum_{\begin{smallmatrix}d|f \\ \frac{f}{d}\not\equiv 0\!\! \mod p \end{smallmatrix}} g_k\!\left(\frac{m}{d^2}\right) E_{k,\frac{m}{d^2}}^{(1)}(\tau,pdz) \\
 &&
 + \delta(p|f)\, p^{2k-3}
    \sum_{d|\frac{f}{p}} g_k\!\left(\frac{m}{d^2p^2}\right) E_{k,\frac{m}{d^2p^2}}^{(1)}(\tau,p^2dz) \\
 &&
 - \delta(p|f)\, p^{k-2}
    \sum_{\begin{smallmatrix} d > 0 \\ pd|f \end{smallmatrix}}
     \left(\frac{m/(dp)^2}{p}\right) g_k\!\left(\frac{m}{d^2p^2}\right) E_{k,\frac{m}{d^2p^2}}^{(1)}(\tau,p^2dz) \\
 &&
 + \sum_{\begin{smallmatrix} d | f \\ \frac{f}{d} \not \equiv 0 \!\! \mod p \end{smallmatrix}}
    g_k\!\left(\frac{mp^2}{(pd)^2}\right) E_{k,\frac{mp^2}{(pd)^2}}^{(1)}(\tau,pdz) \\
 &=&
 \sum_{d|f} g_k\!\left(\frac{mp^2}{d^2}\right) E_{k,\frac{mp^2}{d^2}}^{(1)}(\tau,dz)
 + p^{k-2} \left(\frac{D_0}{p}\right)
      \sum_{\begin{smallmatrix}d|f \\ \frac{f}{d}\not\equiv 0\!\! \mod p \end{smallmatrix}}
 \!\!  g_k\!\left(\frac{m}{d^2}\right) E_{k,\frac{m}{d^2}}^{(1)}(\tau,pdz) \\
 &&
 + \delta(p|f)\, p^{2k-3}
    \sum_{d|\frac{f}{p}} g_k\!\left(\frac{m}{d^2p^2}\right) E_{k,\frac{m}{d^2p^2}}^{(1)}(\tau,p^2dz) \\
 &&
 - \delta(p|f)\, p^{k-2} \left(\frac{D_0}{p}\right)
    \sum_{\begin{smallmatrix} d'|f \\ \frac{f}{d'}\not\equiv 0\!\!\mod p \end{smallmatrix}}
      g_k\!\left(\frac{m}{d'^2}\right) E_{k,\frac{m}{d'^2}}^{(1)}(\tau,pd'z) \\
 &&
 + \sum_{\begin{smallmatrix} d' | pf \\ \frac{pf}{d'} \not\equiv 0\!\! \mod p  \end{smallmatrix}}
    g_k\!\left(\frac{mp^2}{d'^2}\right) E_{k,\frac{mp^2}{d'^2}}^{(1)}(\tau,d'z)  \\
 &=&
 \sum_{d|fp} g_k\!\left(\frac{mp^2}{d^2}\right) E_{k,\frac{mp^2}{d^2}}^{(1)}(\tau,dz)
 + p^{k-2} \left(\frac{D_0 f^2}{p}\right)
      \sum_{d|f} g_k\!\left(\frac{m}{d^2}\right) E_{k,\frac{m}{d^2}}^{(1)}(\tau,pdz) \\
 &&
 + \delta(p|f)\, p^{2k-3}
    \sum_{d|\frac{f}{p}} g_k\!\left(\frac{m}{d^2p^2}\right) E_{k,\frac{m}{d^2p^2}}^{(1)}(\tau,p^2dz).
\end{eqnarray*}
Because
$ e_{k,m}^{(1)}(\tau,z)
 =
 \sum_{d|f} g_k\!\left(\frac{m}{d^2}\right) E_{k,\frac{m}{d^2}}^{(1)}(\tau,dz),
$
we conclude Theorem \ref{thm:deg2}.
\qed

\subsection{Proof of Corollary~\ref{cor:deg2}}\label{ss:cor:deg2}
In this subsection we shall show Corollary~\ref{cor:deg2}.
Let $M_{k-\frac12}^+\left(\Gamma_0(4)\right)$ be the Kohnen plus-space of weight $k-\frac12$.
Let $g(\tau) = \sum_m c(m) e^{2\pi \sqrt{-1} m \tau}$ be the Fourier expansion of an element $g$ in
$M_{k-\frac12}^+\left(\Gamma_0(4)\right)$.
For any prime $p$ the Hecke operator $T_1(p^2)$ is defined by
\begin{eqnarray*}
 &&
  \left(g|T_1(p^2)\right)(\tau) \\
 &:=&
 \sum_m \left(c(p^2m) + p^{k-2}\left(\frac{(-1)^{k-1}m}{p}\right) c(m) + p^{2k-3} c\! \left(\frac{m}{p^2}\right) \right) e^{2\pi \sqrt{-1} m\tau}.
\end{eqnarray*}
Hence, by the definition of $V_p^{(1)}$ and of $S_p^{(1)}$ and by substituting $z=0$ to $e_{k,m}^{(1)}(\tau,z)$, we obtain
\begin{eqnarray*}
 \left(e_{k,m}^{(1)}( * ,0)|T_1(p^2)\right)(\tau)
 &=&
 \left(e_{k,m}^{(1)}|V_p^{(1)}\right)(\tau,0) \\
 &=&
 \left(e_{k,m}^{(1)}|S_p^{(1)} \right)(\tau,0) \\
 &=&
 e_{k,p^2m}^{(1)}(\tau,0) + p^{k-2} \left(\frac{-m}{p}\right) e_{k,m}^{(1)}(\tau,0) + p^{2k-3} e_{k,\frac{m}{p^2}}^{(1)}(\tau,0).
\end{eqnarray*}
Therefore
\begin{eqnarray*}
 &&
 \mathcal{H}_{k-\frac12}^{(2)}\!\!\left.\left(\begin{pmatrix} \tau & 0 \\ 0 & \omega \end{pmatrix}\right)\right|_{\tau}T_1(p^2)
 \\ &=&
 \sum_m \left(\left(e_{k,m}^{(1)}( * ,0)|T_1(p^2)\right)(\tau)\right) e^{2\pi \sqrt{-1} m \omega}
 \\ &=&
 \sum_m \left(
   e_{k,p^2m}^{(1)}(\tau,0) + p^{k-2} \left(\frac{-m}{p}\right) e_{k,m}^{(1)}(\tau,0) + p^{2k-3} e_{k,\frac{m}{p^2}}^{(1)}(\tau,0) \right)
   e^{2\pi \sqrt{-1} m \omega}
 \\ &=&
 \mathcal{H}_{k-\frac12}^{(2)}\!\! \left.\left(\begin{pmatrix} \tau & 0 \\ 0 & \omega \end{pmatrix}\right)\right|_{\omega}T_1(p^2).
\end{eqnarray*}
\qed

\section{Proof of Theorem~\ref{thm:deg3}}
In this section we shall give the proof of Theorem \ref{thm:deg3}.
We treat the case degree $n = 2$.
For the sake of simplicity we abbreviate $E_{k,\mathcal{M}}^{(2)}$ (resp. $e_{k,\mathcal{M}}^{(2)}$) as $E_{k,\mathcal{M}}$ 
(resp. $e_{k,\mathcal{M}}$).

\subsection{Calculation of $\tilde{K}_{i,j}^{\beta}$}\label{ss:tilde_kij_n2}
In this subsection we shall express $\tilde{K}_{i,j}^{\beta}(\tau,z)$ (cf. Proposition~\ref{prop:tilde_kij}) as a
linear combination of three Jacobi-Eisenstein series $E_{k,\mathcal{M}[\smat{p}{ }{ }{1}]}(\tau,z)$,
$E_{k,\mathcal{M}}(\tau,z\smat{p}{ }{ }{1})$ and
$E_{k,\mathcal{M}[X^{-1}\smat{p}{ }{ }{1}^{-1}]}(\tau,z\smat{p}{ }{ }{1} {^t X} \smat{p}{ }{ }{1})$,
where $X = \smat{1}{ }{x}{1}$ is a certain matrix depending on the choice of $\mathcal{M}$ and $p$.

From Proposition \ref{prop:tilde_kij} we recall
\begin{eqnarray*}
 E_{k,\mathcal{M}}|V_{\alpha,2-\alpha}(p^2)
 &=&
 \sum_{0\leq i \leq j \leq 2}
  \tilde{K}_{i,j}^{\alpha-i-2+j}.
\end{eqnarray*}
Hence,
\begin{eqnarray*}
 E_{k,\mathcal{M}}|V_{1,1}(p^2)
 &=&
 \tilde{K}_{1,2}^{0}
 +
 \tilde{K}_{0,1}^{0}
 +
 \tilde{K}_{0,2}^{1}
\end{eqnarray*}
and
\begin{eqnarray*}
 E_{k,\mathcal{M}}|V_{2,0}(p^2)
 &=&
 \tilde{K}_{2,2}^{0}
 +
 \tilde{K}_{1,1}^{0}
 +
 \tilde{K}_{0,0}^{0}
 +
 \tilde{K}_{1,2}^{1}
 +
 \tilde{K}_{0,1}^{1}
 +
 \tilde{K}_{0,2}^{2}.
\end{eqnarray*}
\begin{lemma}\label{lemma:E_lambda}
For $\mathcal{M} \in \mbox{Sym}_2^+$ let
$D_0$ be the discriminant of $\Qq(\sqrt{-\det(2\mathcal{M})})$.
Let $f$ be the positive integer such that $-\det(2\mathcal{M}) = D_0 f^2$.
Then, for any prime $p$ we obtain
\begin{eqnarray*}
 \tilde{K}_{1,2}^{0}(\tau,z) = 
 p^{-2k+3}E_{k,\mathcal{M}}(\tau,z\left(\begin{smallmatrix}p&\\ & 1 \end{smallmatrix}\right)) 
 + p^{-2k+4}E_{k,\mathcal{M}[\left(\begin{smallmatrix}p&\\ & 1 \end{smallmatrix}\right)]}(\tau,z) ,
\end{eqnarray*}

\begin{eqnarray*}
 && \tilde{K}_{0,1}^{0}(\tau,z) =  \\
 && \begin{cases}
  p^{-4k+7}E_{k,\mathcal{M}[X^{-1}\left(\begin{smallmatrix}p&\\ & 1 \end{smallmatrix}\right)^{-1}]}
 (\tau,z\left(\begin{smallmatrix}p&\\ & 1 \end{smallmatrix}\right) ^t X \smat{p}{ }{ }{1}) 
  + p^{-4k+8}E_{k,\mathcal{M}}(\tau,z\left(\begin{smallmatrix}p&\\ & 1 \end{smallmatrix}\right)) 
 & \mbox{ if } p| f ,
 \\
 p^{-4k+7}(p+1)E_{k,\mathcal{M}}(\tau,z\left(\begin{smallmatrix}p&\\ & 1 \end{smallmatrix}\right))
 & \mbox{ if } p {\not|} f ,
\end{cases}
\end{eqnarray*}
where $X = \smat{1}{0}{x}{1}$ is a matrix such that $\mathcal{M}[X^{-1}\smat{p}{ }{ }{1}^{-1}] \in \mbox{Sym}_2^+$,
\begin{eqnarray*}
 &&
 \tilde{K}_{0,2}^{1}(\tau,z) \\
 &=&
 -p^{-3k+5} \left(\frac{D_0 f^2}{p}\right) E_{k,\mathcal{M}[\left(\begin{smallmatrix}p& \\ &1 \end{smallmatrix}\right)]}(\tau,z) 
 + p^{-3k+5} \left(\frac{D_0 f^2}{p}\right) 
 E_{k,\mathcal{M}}(\tau, z\left(\begin{smallmatrix} p & \\ & 1 \end{smallmatrix}\right)).
\end{eqnarray*}
\end{lemma}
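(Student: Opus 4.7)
The plan is to apply Proposition~\ref{prop:tilde_kij} with $n=2$ to each of the three triples $(i,j,\beta)\in\{(1,2,0),(0,1,0),(0,2,1)\}$, all of which correspond to $\alpha=1$ (so $n-\alpha=1$ throughout), and to simplify the resulting double sum in exactly the same way as in the proof of Lemma~\ref{lemma:E_V1} for the $n=1$ case. For every triple the recipe has three steps: make the set $L_{i,j}^*$ and the coset space $\Gamma(\delta_{i,j})\backslash\Gamma_2$ explicit, evaluate the inner sum $\sum_{u_2}G_{\mathcal{M}}^{j-i,n-\alpha}(\lambda_2+(0,u_2))$, and then repackage the remaining double sum as one of the three Jacobi-Eisenstein series appearing on the right-hand side.

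The geometric data are immediate. For $(i,j)=(1,2)$, $\delta_{1,2}=\mbox{diag}(1,p)$, the $\lambda_3$-part is empty, and $\lambda=\binom{\lambda_1}{\lambda_2}$ with $\lambda_1\in p\Z\times\Z$ and $\lambda_2\in\Z^{(1,2)}$. For $(i,j)=(0,1)$, $\delta_{0,1}=\mbox{diag}(p,p^2)$ and the $\lambda_3\in(p^{-1}\Z)^{(1,2)}$ piece survives subject to $\lambda_3\mathcal{M}\,{^t\lambda_3}\in\Z$ and $2\lambda_3\mathcal{M}\left(\begin{smallmatrix}0\\1\end{smallmatrix}\right)\in\Z$. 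For $(i,j)=(0,2)$, $\delta_{0,2}=p\cdot 1_2$ is central, so $\Gamma(\delta_{0,2})=\Gamma_{\infty}^{(2)}$ and $L_{0,2}^*=\Z^{(2,2)}$. In the Gauss-sum step, the triples $(1,2,1)$ and $(0,1,1)$ are trivial because $j-i-(n-\alpha)=0$ forces $x'=0$, so $G_{\mathcal{M}}^{\cdot,\cdot}\equiv 1$ and the $u_2$-sum contributes merely the cardinality factor $p^{j-i}$. The nontrivial evaluation occurs in $(0,2,1)$, where the sum is over rank-one symmetric $x'\in(\Z/p\Z)^{(2,2)}$; I would carry it out by first normalizing $\mathcal{M}$ modulo $p$ as $\mathcal{M}\equiv {^tX}\left(\begin{smallmatrix}4^{-1}|D_0|f^2 & \\ & 1\end{smallmatrix}\right)X\pmod{p}$ with $X=\smat{1}{ }{x}{1}$, reducing the rank-one condition to a one-variable quadratic Gauss sum whose evaluation produces the Legendre symbol $\left(\frac{D_0 f^2}{p}\right)$.

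The repackaging step uses the slash-operator identity $\bigl(1|_{k,\mathcal{M}}([(\lambda,0),0],M)\bigr)(\tau,z\smat{p}{ }{ }{1})=\bigl(1|_{k,\mathcal{M}[\smat{p}{ }{ }{1}]}([(\lambda\smat{p}{ }{ }{1}^{-1},0),0],M)\bigr)(\tau,z)$. Splitting $L_{i,j}^*$ according to whether the transformed $\lambda\smat{p}{ }{ }{1}^{-1}$ is $p$-integral, the first piece collapses onto $E_{k,\mathcal{M}[\smat{p}{ }{ }{1}]}(\tau,z)$, or in the degenerate case $p\mid f$ of $\tilde{K}_{0,1}^0$ onto $E_{k,\mathcal{M}[X^{-1}\smat{p}{ }{ }{1}^{-1}]}(\tau,z\smat{p}{ }{ }{1}\,{^tX}\smat{p}{ }{ }{1})$; the complementary piece collapses onto $E_{k,\mathcal{M}}(\tau,z\smat{p}{ }{ }{1})$. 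Multiplying the prefactor $p^{-k(2n-i-j+1)+(n-j)(n-i+1)+2n-j}$ from Proposition~\ref{prop:tilde_kij} by the $u_2$-cardinality (and the Legendre-symbol contribution in the case of $\tilde{K}_{0,2}^1$) yields the powers of $p$ stated in the three identities.

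The main obstacle is the Gauss-sum evaluation for $\tilde{K}_{0,2}^1$: the answer must simultaneously produce the correct Legendre symbol $\left(\frac{D_0 f^2}{p}\right)$ and two Eisenstein contributions with opposite signs. A secondary difficulty is the $(0,1,0)$ case, where $L_{0,1}^*$ is cut out by $p$-adic integrality conditions on $\lambda_3$ involving $\mathcal{M}$; a careful case distinction between $p\mid f$ and $p\nmid f$ (and at $p=2$ among the three normal forms of $\mathcal{M}$ modulo $2$ already used in Lemma~\ref{lemma:E_V1}) is required in order to obtain the single coefficient $(p+1)$ when $p\nmid f$ versus the two distinct Eisenstein terms when $p\mid f$.
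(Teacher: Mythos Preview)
Your proposal is correct and follows essentially the same approach as the paper: apply Proposition~\ref{prop:tilde_kij} with $n=2$, identify $L_{i,j}^*$ and the coset structure $\Gamma(\delta_{i,j})\backslash\Gamma_2$, evaluate the Gauss-sum average over $u_2$, and repackage via the slash identity into the three target Eisenstein series. The paper differs only in bookkeeping: it isolates the lattice-plus-coset combination for $(i,j)=(1,2)$ as a separate appendix identity (Lemma~\ref{lemma:F1}, counting how many cosets of $\delta_{1,2}\mbox{GL}_2(\Z)\delta_{1,2}^{-1}\cap\mbox{GL}_2(\Z)\backslash\mbox{GL}_2(\Z)$ carry a given $\lambda$ into $L_{1,2}^*$), and it handles the rank-one Gauss sum $G_{\mathcal{M}}^{2,1}$ by quoting a tabulated appendix lemma (Lemma~\ref{lemma:gauss21}, based on Saito's formulas) rather than by your direct reduction to a one-variable quadratic Gauss sum---but the content is the same and your reduction would work.
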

\begin{proof}
Let $G_{\mathcal{M}}^{j-i,2-\alpha}(\lambda)$ and $L_{i,j}^*$
be the symbols defined in \S\ref{ss:tilde_kij}
and $\Gamma(\delta_{i,j})$ be the symbol defined in \S\ref{ss:kij}.

For $i = 1$, $j = 2$ we have
\begin{eqnarray*}
 L_{1,2}^*
 &=&
 \left\{\left(\begin{smallmatrix}\lambda_1 \\ \lambda_2 \end{smallmatrix}\right) \in \Z^{(2,2)}
 \, | \, 
 \lambda_1 \in p \Z \times \Z, \, \lambda_2 \in \Z \times \Z \right\}.
\end{eqnarray*}
Now we remark that the set $\left\{\smat{1}{ }{x}{1}, \smat{ }{-1}{1}{p} \, | \, x \mod p \right\}$ is
a complete set of representatives of $\delta_{1,2} GL_2(\Z) \delta_{1,2}^{-1}\cap GL_2(\Z) \backslash GL_2(\Z)$.
Hence, for any function $F$ on $\Z^{(2,2)}$ we obtain
\begin{eqnarray*}
 \sum_{\smat{A}{B}{0_2}{^t A ^{-1}} \in \Gamma(\delta_{1,2})\backslash \Gamma_{\infty}^{(2)}} \sum_{\lambda\in L_{1,2}^*}
 F({^t A} \lambda)
 &=&
 \sum_{A \in \delta_{1,2} GL_2(\Z) \delta_{1,2}^{-1}\cap GL_2(\Z) \backslash GL_2(\Z)}
 \sum_{\lambda\in L_{1,2}^*}
 F({^t A} \lambda)
\\
 &=&
 \sum_{\lambda \in \Z^{(2,2)}} F(\lambda) + p \sum_{\lambda \in \Z^{(2,2)}} F(\lambda \smat{p}{0}{0}{1}),
\end{eqnarray*}
if the above summations are absolutely convergent (cf. Lemma~\ref{lemma:F1} in the appendix).
Due to Proposition~\ref{prop:tilde_kij} we therefore obtain
\begin{eqnarray*}
 \tilde{K}_{1,2}^{0}(\tau,z)
 &=&
 p^{-2k+2 }
 \sum_{M \in \Gamma(\delta_{1,2})\backslash \Gamma_2}
 \sum_{ \lambda = \left(\begin{smallmatrix}\lambda_1\\ \lambda_2 \end{smallmatrix}\right)\in L_{1,2}^*}
 \left\{1|_{k,\mathcal{M}}([(\lambda,0),0_n],M)\right\}(\tau,z\smat{p}{0}{0}{1})
\\
 &&
 \times
  \sum_{u_2 \in (\Z/p\Z)^{(1,1)}}G_{\mathcal{M}}^{1,1}(\lambda_2 + (0,u_2)).
\end{eqnarray*}
Because
 $G_{\mathcal{M}}^{1,1}(\lambda_2 + (0,u_2)) = 1$ for any $\lambda_2 \in \Z^{(1,2)}$, $u_2 \in \Z$,
and because
\begin{eqnarray*}
 \left\{1|_{k,\mathcal{M}}([(\lambda\smat{p}{0}{0}{1},0),0_2],M)\right\}(\tau,z\smat{p}{0}{0}{1})
 &=&
 \left\{1|_{k,\mathcal{M}[\smat{p}{0}{0}{1}]}([(\lambda,0),0_2],M)\right\}(\tau,z),
\end{eqnarray*}
we obtain
\begin{eqnarray*}
 &&
 \tilde{K}_{1,2}^{0}(\tau,z) \\
 &=&
 p^{-2k+3 }
 \sum_{M \in \Gamma_{\infty}^{(2)} \backslash \Gamma_2}
 \sum_{M_0 \in \Gamma(\delta_{1,2})\backslash \Gamma_{\infty}^{(2)}}
 \sum_{ \lambda \in L_{1,2}^*}
 \left\{1|_{k,\mathcal{M}}([(\lambda,0),0_n],M_0 M)\right\}(\tau,z\smat{p}{0}{0}{1})
\\
&=&
 p^{-2k+3} \left\{ 
  \sum_{M \in \Gamma_{\infty}^{(2)} \backslash \Gamma_2} \sum_{\lambda \in \Z^{(2,2)}}
               \left\{1|_{k,\mathcal{M}}([(\lambda,0),0_2],M)\right\}(\tau,z\smat{p}{0}{0}{1})
 \right. \\
&&
 \left. + 
  p \sum_{M \in \Gamma_{\infty}^{(2)} \backslash \Gamma_2} \sum_{\lambda \in \Z^{(2,2)}}
               \left\{1|_{k,\mathcal{M}}([(\lambda\smat{p}{0}{0}{1},0),0_2],M)\right\}(\tau,z\smat{p}{0}{0}{1}) 
 \right\} \\
&=& 
 p^{-2k+3}E_{k,\mathcal{M}}(\tau,z\left(\begin{smallmatrix}p&\\ & 1 \end{smallmatrix}\right)) 
 + p^{-2k+4}E_{k,\mathcal{M}[\left(\begin{smallmatrix}p&\\ & 1 \end{smallmatrix}\right)]}(\tau,z) .
\end{eqnarray*}
Thus we have the formula for $\tilde{K}_{1,2}^{0}(\tau,z)$.

Now we shall calculate $\tilde{K}_{0,1}^{0}(\tau,z)$.
If $p|f$, then we can take matrices $X = \smat{1}{0}{x}{1} \in \Z^{(2,2)}$ and $\mathcal{M}' \in \mbox{Sym}_2^+$
which satisfy
$\mathcal{M} = \mathcal{M}'[\smat{p}{0}{0}{1} X]$.
Then
\begin{eqnarray*}
 L_{0,1}^* 
  &=& 
  \left. \left\{ \left(\begin{matrix}\lambda_2 \\ \lambda_3 \end{matrix} \right)
  \, \right| \,
  \lambda_2 \in \Z^{(1,2)}, \, 
  \lambda_3 {^t X} \in \Z^{(1,2)}\smat{p}{0}{0}{1}^{-1}  \right\}
\end{eqnarray*}
and
\begin{eqnarray*}
 \tilde{K}_{0,1}^{0}(\tau,z) 
&=& 
 p^{-4k+6} \sum_{M \in \Gamma(\delta_{0,1})\backslash \Gamma_2} 
      \sum_{\lambda = \left(\begin{smallmatrix}\lambda_2 \\ \lambda_3 \end{smallmatrix}\right)
                \in L_{0,1}^*}
    \left\{1|_{k,\mathcal{M}}([(\lambda,0),0_2],M)\right\}(\tau,z\smat{p}{0}{0}{1}) \\
&&
    \times \sum_{u_2 \in \Z/p\Z}G_{\mathcal{M}}^{1,1}(\lambda_2 + (0,u_2)) \\
&=& 
 p^{-4k+7} \sum_{M \in \Gamma(\delta_{0,1})\backslash \Gamma_2} 
      \sum_{\lambda \in L_{0,1}^* }
    \left\{1|_{k,\mathcal{M}}([(\lambda,0),0_2],M)\right\}(\tau,z\smat{p}{0}{0}{1}) .
\end{eqnarray*}
Because
\begin{eqnarray*}
 &&
 \left\{1|_{k,\mathcal{M}}([(\lambda,0),0_2],M)\right\}(\tau,z\smat{p}{0}{0}{1}) \\
 &=&
 \left\{1|_{k,\mathcal{M}'}([(\lambda {^t X} \smat{p}{0}{0}{1},0),0_2],M)\right\}(\tau,z\smat{p}{0}{0}{1}{^t X}\smat{p}{0}{0}{1}) ,
\end{eqnarray*}
we obtain
\begin{eqnarray*}
 &&
 \tilde{K}_{0,1}^{0}(\tau,z) \\
 &=&
 p^{-4k+7} \sum_{M \in \Gamma(\delta_{0,1})\backslash \Gamma_2}
      \sum_{\begin{smallmatrix}
               \lambda \\ 
               \lambda {^t X} \in \left(\begin{smallmatrix} p \Z & \Z \\ \Z & \Z \end{smallmatrix} \right)
            \end{smallmatrix}}
      \left\{1|_{k,\mathcal{M}'}([(\lambda {^t X},0),0_2],M)\right\}(\tau,z\smat{p}{0}{0}{1}{^t X}\smat{p}{0}{0}{1})
\\
 &=&
 p^{-4k+7}\left\{
  \sum_{M \in \Gamma_{\infty}^{(2)} \backslash \Gamma_2}
  \sum_{\begin{smallmatrix}
         \lambda \\
         \lambda {^t X} \in \Z^{(2,2)}
        \end{smallmatrix}}
      \left\{1|_{k,\mathcal{M}'}([(\lambda {^t X},0),0_2],M)\right\}(\tau,z\smat{p}{0}{0}{1}{^t X}\smat{p}{0}{0}{1})
 \right. 
\\
 &&
 + \left. 
  p \sum_{M \in \Gamma_{\infty}^{(2)} \backslash \Gamma_2}
    \sum_{\begin{smallmatrix}
           \lambda \\
           \lambda {^t X} \in \Z^{(2,2)}
          \end{smallmatrix}}
      \left\{1|_{k,\mathcal{M}'}([(\lambda {^t X} \smat{p}{0}{0}{1},0),0_2],M)\right\}(\tau,z\smat{p}{0}{0}{1}{^t X}\smat{p}{0}{0}{1})
 \right\}
\\
 &=&
 p^{-4k+7}E_{k,\mathcal{M}[X^{-1}\left(\begin{smallmatrix}p&\\ & 1 \end{smallmatrix}\right)^{-1}]}
 (\tau,z\left(\begin{smallmatrix}p&\\ & 1 \end{smallmatrix}\right) ^t X \smat{p}{ }{ }{1}) 
  + p^{-4k+8}E_{k,\mathcal{M}}(\tau,z\left(\begin{smallmatrix}p&\\ & 1 \end{smallmatrix}\right)).
\end{eqnarray*}
Thus we obtain the formula of $\tilde{K}_{0,1}^{0}(\tau,z)$ for the case $p | f$.
If $p {\not|} f$,
then by a straightforward calculation, we have
$L_{0,1}^* = \Z^{(2,2)}$.
Hence
\begin{eqnarray*}
 &&
 \tilde{K}_{0,1}^{0}(\tau,z)  \\
&=& 
 p^{-4k+6} \sum_{M \in \Gamma(\delta_{0,1})\backslash \Gamma_2} 
      \sum_{\lambda \in \Z^{(2,2)}}
    \left\{1|_{k,\mathcal{M}}([(\lambda,0),0_2],M)\right\}(\tau,z\smat{p}{0}{0}{1})
 \\
 &&
    \times \sum_{u_2 \in \Z/p\Z}G_{\mathcal{M}}^{1,1}(\lambda_2 + (0,u_2))
 \\
 &=&
 p^{-4k+7} [\Gamma_{\infty}^{(2)} : \Gamma(\delta_{0,1})] \sum_{M \in \Gamma_{\infty}^{(2)} \backslash \Gamma_2} 
      \sum_{\lambda \in \Z^{(2,2)}}
      \left\{1|_{k,\mathcal{M}}([(\lambda,0),0_2],M)\right\}(\tau,z\smat{p}{0}{0}{1})
\\
 &=&
 p^{-4k+7}(p+1)E_{k,\mathcal{M}}(\tau,z\left(\begin{smallmatrix}p&\\ & 1 \end{smallmatrix}\right)).
\end{eqnarray*}
Here we used
$[\Gamma_{\infty}^{(2)} : \Gamma(\delta_{0,1})] 
 = [\mbox{GL}_2(\Z) : \mbox{GL}_2(\Z)\cap \delta_{0,1}\mbox{GL}_2(\Z)\, \delta_{0,1}^{-1} ] 
 = p + 1$.
Thus we obtain the formula of $\tilde{K}_{0,1}^{0}(\tau,z)$ also for the case $p {\not|} f$.

Finally we shall calculate $\tilde{K}_{0,2}^{1}(\tau,z)$.
We remark that for a matrix $X = \smat{1}{0}{x}{1}$ and for a $\lambda \in \Z^{(2,2)}$,
the condition $\lambda {^t X} \in \Z^{(2,2)}\smat{p}{0}{0}{1}$
is equivalent to the condition $\lambda \in \Z^{(2,2)}\smat{p}{0}{0}{1}$.
Thus, due to Lemma~\ref{lemma:gauss21} in the appendix, we have
\begin{eqnarray*}
 \sum_{u_2 \in (\Z/p\Z)^{(2,1)}}G_{\mathcal{M}}^{2,1}(\lambda + (0,u_2))
 &=&
 \begin{cases}
  0 & \mbox{if } \lambda \in \Z^{(2,2)}\smat{p}{0}{0}{1}, \\
  p^3 \left(\frac{D_0 f^2}{p}\right)
    & \mbox{if } \lambda \not \in \Z^{(2,2)}\smat{p}{0}{0}{1}.
 \end{cases}
\end{eqnarray*}

Therefore
\begin{eqnarray*}
 &&
 \tilde{K}_{0,2}^{1}(\tau,z) \\
 &=&
 p^{-3k+2}\sum_{M \in \Gamma_{\infty}^{(2)}\backslash \Gamma_2} \sum_{\lambda \in \Z^{(2,2)}}
 \left\{1|_{k,\mathcal{M}}([(\lambda,0),0_2],M)\right\}(\tau,z\smat{p}{0}{0}{1})
\\
 &&
 \times \sum_{u_2 \in (\Z/p\Z)^{(2,1)}}G_{\mathcal{M}}^{2,1}(\lambda + (0,u_2))
\\
 &=&
 p^{-3k+2}\sum_{M \in \Gamma_{\infty}^{(2)}\backslash \Gamma_2} 
 \sum_{\begin{smallmatrix}\lambda \in \Z^{(2,2)}\\ 
                          \lambda \not \in \Z^{(2,2)}\smat{p}{0}{0}{1} \end{smallmatrix}}
 p^3\left(\frac{D_0 f^2}{p}\right) \left\{1|_{k,\mathcal{M}}([(\lambda,0),0_2],M)\right\}(\tau,z\smat{p}{0}{0}{1})
\\
 &=&
 p^{-3k+5} \left(\frac{D_0 f^2}{p}\right)
 \left\{ 
   - \sum_{M \in \Gamma_{\infty}^{(2)} \backslash \Gamma_2} \sum_{\lambda \in \Z^{(2,2)}}
   \left\{1|_{k,\mathcal{M}}([(\lambda\smat{p}{0}{0}{1},0),0_2],M)\right\}(\tau,z\smat{p}{0}{0}{1})
 \right.
\\
 && \left. 
  + \sum_{M \in \Gamma_{\infty}^{(2)} \backslash \Gamma_2} \sum_{\lambda \in \Z^{(2,2)}}
   \left\{1|_{k,\mathcal{M}}([(\lambda,0),0_2],M)\right\}(\tau,z\smat{p}{0}{0}{1})
 \right\} \\
 &=& 
 -p^{-3k+5}\left(\frac{D_0 f^2}{p}\right) 
   E_{k,\mathcal{M}\left[\left(\begin{smallmatrix} p&0\\ 0&1 \end{smallmatrix} \right)\right]}(\tau,z)
 +p^{-3k+5}\left(\frac{D_0 f^2}{p}\right) 
   E_{k,\mathcal{M}}(\tau,z\left(\begin{smallmatrix} p&0\\ 0&1 \end{smallmatrix} \right)) .
\end{eqnarray*}
Hence we conclude the lemma.
\end{proof}

\begin{lemma}\label{lemma:E_omega}
For $\mathcal{M} \in \mbox{Sym}_2^+$ let
$D_0$ and $f$ be as in Lemma~\ref{lemma:E_lambda}.
For any prime $p$
we obtain
\begin{eqnarray*}
 \tilde{K}_{2,2}^{0}(\tau,z) 
 &=& 
  p^{-k+2} E_{k,\mathcal{M}[\left(\begin{smallmatrix}p&0\\0&1\end{smallmatrix} \right)]}(\tau,z),
\\
 \tilde{K}_{1,1}^{0}(\tau,z)
 &=& 
 \begin{cases}
 p^{-3k+6} E_{k,\mathcal{M}}(\tau,z\left(\begin{smallmatrix}p&0\\0&1\end{smallmatrix} \right)) + \!
 p^{-3k+7} E_{k,\mathcal{M}[\left(\begin{smallmatrix}p&0\\0&1\end{smallmatrix} \right)]}(\tau,z)
 & \mbox{if } p {\not|}f ,\\
 p^{-3k+5} E_{k,\mathcal{M}'}(\tau,z\left(\begin{smallmatrix}p^2&0\\0&1\end{smallmatrix} \right))
 + \! p^{-3k+5}(p-1) E_{k,\mathcal{M}}(\tau,z\left(\begin{smallmatrix}p&0\\0&1\end{smallmatrix} \right)) & \\
 +  p^{-3k+7} E_{k,\mathcal{M}[\left(\begin{smallmatrix}p&0\\0&1\end{smallmatrix} \right)]}(\tau,z)
 & \mbox{if } p | f,
 \end{cases}
\\
 \tilde{K}_{0,0}^{0}(\tau,z) 
 &=& 
 \begin{cases}
 p^{-5k+10} E_{k,\mathcal{M}}(\tau , z\left(\begin{smallmatrix}p&0\\0&1\end{smallmatrix} \right))
 & \mbox{if } p {\not|} f ,\\
 p^{-5k+10} E_{k,\mathcal{M}'}(\tau , z\left(\begin{smallmatrix}p^2&0\\0&1\end{smallmatrix} \right))
 & \mbox{if } p | f ,
 \end{cases}
\\
 \tilde{K}_{1,2}^{1}(\tau,z) 
 &=& 
 p^{-2k+3}\left(\frac{D_0 f^2}{p}\right) E_{k,\mathcal{M}}(\tau , z\left(\begin{smallmatrix}p&0\\0&1\end{smallmatrix} \right)) \\
 &&
 - p^{-2k+3}\left(\frac{D_0 f^2}{p}\right) 
   E_{k,\mathcal{M} \left[\left(\begin{smallmatrix}p&0\\0&1\end{smallmatrix} \right)\right]} (\tau,z),
\\
 \tilde{K}_{0,1}^{1}(\tau,z) 
 &=&
 p^{-4k+8}\left(\frac{D_0 f^2}{p}\right) E_{k,\mathcal{M}}(\tau , z\left(\begin{smallmatrix}p&0\\0&1\end{smallmatrix} \right)) \\
 &&
 - p^{-4k+8}\left(\frac{D_0 f^2}{p}\right) 
   E_{k,\mathcal{M} \left[\left(\begin{smallmatrix}p&0\\0&1\end{smallmatrix} \right)\right]} (\tau,z),
\\
 \tilde{K}_{0,2}^{2}(\tau,z) 
 &=& 
 p^{-3k+4}(p-1) E_{k,\mathcal{M}}(\tau , z\left(\begin{smallmatrix}p&0\\0&1\end{smallmatrix} \right)) .
\end{eqnarray*}
\end{lemma}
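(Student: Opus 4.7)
The plan is to handle each of the six terms separately, in each case applying Proposition~\ref{prop:tilde_kij} to express
\begin{eqnarray*}
 \tilde{K}_{i,j}^{\alpha-i-2+j}(\tau,z)
 &=&
 p^{c(i,j)}
 \sum_{M \in \Gamma(\delta_{i,j})\backslash \Gamma_2}
 \sum_{\lambda \in L_{i,j}^*}
 \{1|_{k,\mathcal{M}}([(\lambda,0),0_2],M)\}(\tau,z\smat{p}{ }{ }{1})
 \sum_{u_2} G_{\mathcal{M}}^{j-i,2-\alpha}(\lambda_2+(0,u_2)),
\end{eqnarray*}
with $\alpha=2$ and the correct normalization $c(i,j)$; then describing $L_{i,j}^*$ and the set of $\Gamma(\delta_{i,j})$-coset representatives in $\Gamma_\infty^{(2)}$ explicitly, evaluating the inner Gauss sum, and finally repackaging the outer sum as Jacobi-Eisenstein series of shifted index, exactly along the lines of the proof of Lemma~\ref{lemma:E_lambda}.

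For the three ``trivial Gauss sum'' cases $\tilde{K}_{2,2}^{0}$, $\tilde{K}_{1,1}^{0}$ and $\tilde{K}_{0,0}^{0}$ we have $j-i=0$, hence $G^{0,2}_{\mathcal{M}}=1$. The case $i=j=2$ is immediate: $\Gamma(\delta_{2,2})=\Gamma_{\infty}^{(2)}$, $L_{2,2}^*=(p\Z\times\Z)^{(2,2)}$, and the change of variable $\lambda\mapsto\lambda\smat{p}{ }{ }{1}$ converts the $\lambda$-sum into $E_{k,\mathcal{M}[\smat{p}{ }{ }{1}]}(\tau,z)$. For $i=j=1$ and $i=j=0$ one uses an analogue of Lemma~\ref{lemma:F1} to re-sum over a complete set of representatives of $\mbox{GL}_2(\Z)\cap\delta_{i,j}\mbox{GL}_2(\Z)\delta_{i,j}^{-1}\backslash\mbox{GL}_2(\Z)$; the two subcases $p\nmid f$ and $p\mid f$ arise because in the latter case there exists $X=\smat{1}{ }{x}{1}$ and $\mathcal{M}'\in\mbox{Sym}_2^+$ with $\mathcal{M}=\mathcal{M}'[\smat{p}{ }{ }{1}X]$, so a denominator-$p$ entry in $\lambda_3\mathcal{M}{}^t\lambda_3\in\Z$ can be produced and an extra Jacobi-Eisenstein series with index $\mathcal{M}'$ of argument $z\smat{p^2}{ }{ }{1}$ appears.

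For the three ``nontrivial Gauss sum'' cases $\tilde{K}_{1,2}^{1}$, $\tilde{K}_{0,1}^{1}$ and $\tilde{K}_{0,2}^{2}$ we need the evaluation of $\sum_{u_2}G^{1,1}_{\mathcal{M}}(\lambda_2+(0,u_2))$ (for the first two) and $\sum_{u_2}G^{2,0}_{\mathcal{M}}(\lambda+(0,u_2))$ (for the last), which by the Saito formula~\cite{Sa} take the value $\left(\tfrac{D_0 f^2}{p}\right)p^{\,*}$ on $\lambda_2\notin\Z^{(*)}\smat{p}{ }{ }{1}$ and $0$ otherwise, parallel to the computations in Lemma~\ref{lemma:E_V1} and in the proof of Lemma~\ref{lemma:E_lambda} for $\tilde{K}_{0,2}^{1}$ (the case $\tilde{K}_{0,2}^{2}$ uses the auxiliary Lemma~\ref{lemma:gauss21}, and the cases with $j-i=1$ mirror the degree-one Gauss sum computation). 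Inclusion-exclusion $\sum_{\lambda\notin\Lambda_0}=\sum_{\lambda}-\sum_{\lambda\in\Lambda_0}$ then expresses each term as a combination of $E_{k,\mathcal{M}}(\tau,z\smat{p}{ }{ }{1})$ and $E_{k,\mathcal{M}[\smat{p}{ }{ }{1}]}(\tau,z)$, with the expected power of $p$ and Legendre symbol coefficients.

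The main obstacle will be the bookkeeping: correctly isolating the $p\mid f$ versus $p\nmid f$ dichotomy for $L_{i,j}^*$ (which determines whether the matrix $X=\smat{1}{ }{x}{1}$ has to be introduced to produce a half-integral $\mathcal{M}'$), tracking the several coset representative sums through the interplay between $\Gamma(\delta_{i,j})\backslash\Gamma_\infty^{(2)}$ and the blocks $\lambda_1,\lambda_2,\lambda_3$ of $\lambda$, and collecting the precise power of $p$ out of the normalization $p^{-k(2n-i-j+1)+(n-j)(n-i+1)+2n-j}$ of Proposition~\ref{prop:tilde_kij} together with the Gauss sum value $p^{*}\left(\tfrac{D_0 f^2}{p}\right)$. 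Once those normalizations are checked against the stated formulas, assembling the pieces into the claimed Jacobi-Eisenstein series is routine.
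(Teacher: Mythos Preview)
Your plan is exactly the paper's: start from Proposition~\ref{prop:tilde_kij}, tabulate $L_{i,j}^*$, the Gauss-sum values $\sum_{u_2}G_{\mathcal{M}}^{j-i,0}(\lambda_2+(0,u_2))$, and the coset representatives of $\Gamma(\delta_{i,j})\backslash\Gamma_\infty^{(2)}$ (the paper collects these in the appendix table of \S\ref{ss:table_n2_2}), then re-sum as in Lemma~\ref{lemma:E_lambda}, invoking Lemma~\ref{lemma:F2} for the $(i,j)=(1,1)$ coset sum. A few slips to correct before you execute: since $\alpha=2$ the second Gauss-sum index is $2-\alpha=0$, so the sums are $G_{\mathcal{M}}^{0,0}$, $G_{\mathcal{M}}^{1,0}$, $G_{\mathcal{M}}^{2,0}$ (not $G^{0,2}$, $G^{1,1}$); for $(i,j)=(0,2)$ the sum $\sum_{u_2}G_{\mathcal{M}}^{2,0}(\lambda+(0,u_2))$ is the \emph{constant} $p^2(p-1)$ with no Legendre symbol and no inclusion-exclusion (and the relevant appendix lemma is the $G_{\mathcal{M}}^{2,0}$ one, not Lemma~\ref{lemma:gauss21}); and for $(i,j)=(0,0)$ the coset $\Gamma(\delta_{0,0})\backslash\Gamma_\infty^{(2)}$ is trivial, so no analogue of Lemma~\ref{lemma:F1} is needed there.
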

\begin{proof}
For the calculation of $\tilde{K}_{i,j}^{j-i}$ we need to determine the set $L_{i,j}^*$, the value
of the summation $\displaystyle{\sum_{u_2\in \left(\Z/p\Z\right)^{(j-i,1)}}G_{\mathcal{M}}^{(j-i,0)}(\lambda + (0,u_2))}$
 and a complete set of the representatives of $\Gamma(\delta_{i,j})\backslash \Gamma_{\infty}^{(2)}$.
The table of these are given in \S\ref{ss:table_n2_2} in the appendix.

For the calculation of $\tilde{K}_{1,1}^{0}$ we use the identity
\begin{eqnarray*}
 &&
 \sum_{\smat{A}{B}{0_2}{^t A ^{-1}} \in \Gamma(\delta_{1,1})\backslash \Gamma_{\infty}^{(2)}} \sum_{\lambda \in \smat{p^2\Z}{\Z}{\Z}{\Z}}
   F({^tA} \lambda)
\\
 &=&
 \sum_{\lambda \in \Z^{(2,2)}}F(\lambda)
 + (p-1) \sum_{\lambda \in \Z^{(2,2)}}F(\lambda \smat{p}{0}{0}{1})
 + p^2  \sum_{\lambda \in \Z^{(2,2)}}F(\lambda \smat{p^2}{0}{0}{1}),
\end{eqnarray*}
where $F$ is a function on $\Z^{(2,2)}$ such that the above summations are absolutely convergent.
The proof of this identity will be given in Lemma \ref{lemma:F2} in the appendix.

The rest of the calculation is an analogue to Lemma~\ref{lemma:E_lambda},
hence we omit the detail.
\end{proof}

\subsection{Proof of Theorem \ref{thm:deg3}}
In this subsection we conclude the proof of Theorem \ref{thm:deg3}.

Let $\mathcal{M} = \smat{*}{*}{ * }{1} \in \mbox{Sym}_2^+$ be a matrix which satisfies $\det(2\mathcal{M}) = m$.
Let $D_0$, $f$ be as before, namely $D_0$ is the discriminant of $\Qq(\sqrt{-m})$ and $f$ is the positive integer 
which satisfies $-m = D_0 f^2$.

We define $E_{k,m} = E_{k,m}^{(2)} := \iota_{\mathcal{M}}(E_{k,\mathcal{M}}^{(2)})$,
where the map $\iota_{\mathcal{M}}$ is defined in \S\ref{ss:iota}.
We remark that $E_{k,m}$ is well-defined, namely it does not depend on the choice of $\mathcal{M}$ (cf. \S\ref{ss:pf:deg2}.)

The form $e_{k,m} := e_{k,m}^{(2)} \in J_{k-\frac12,m}^{(2)*}$ was defined as the Fourier-Jacobi coefficient of the
generalized Cohen-Eisenstein series $\mathcal{H}_{k-\frac12}^{(3)}$ of degree $3$ (cf. \S\ref{s:introduction}).
And, due to Lemma~\ref{lemma:iota}, we have
$e_{k,m} = \iota_{M}(e_{k,\mathcal{M}})$.

By Proposition~\ref{prop:iota_hecke} and by Proposition~\ref{prop:fourier_jacobi}, we have
\begin{eqnarray*}
 &&
 e_{k,m}|\widetilde{V}_{\alpha,2-\alpha}(p^2) \\
 &=&
 p^{5k-11+\frac12 \alpha}\,
 \iota_{\mathcal{M}[\smat{p}{0}{0}{1}]}\left(
 e_{k,\mathcal{M}}|V_{\alpha,2-\alpha}(p^2)
 \right)
 \\
 &=&
 p^{5k-11+\frac12 \alpha}\,
 \iota_{\mathcal{M}[\smat{p}{0}{0}{1}]}\left(
  \sum_{d|f} g_k\!\left(\frac{m}{d^2}\right)\left(E_{k,\mathcal{M}[{^t W_d}^{-1}]}|V_{\alpha,2-\alpha}(p^2)\right)(\tau,z W_d)\right).
\end{eqnarray*}
The forms $E_{k,\mathcal{M}}|V_{\alpha,2-\alpha}(p^2)$ $(\alpha = 1,2)$ have been calculated in \S\ref{ss:tilde_kij_n2},
and are written as linear combinations of three forms
 $E_{k,\mathcal{M}[X^{-1} \smat{p}{ }{ }{1}^{-1}]}(\tau,z\smat{p}{ }{ }{1} {^t X} \smat{p}{ }{ }{1})$,
 $E_{k,\mathcal{M}}(\tau,z\smat{p}{ }{ }{1})$
 and
 $E_{k,\mathcal{M}[\smat{p}{ }{ }{1}]}(\tau,z)$.
We recall the definitions of $V_{1,p}^{(2)}$ and $V_{2,p}^{(2)}$:
\begin{eqnarray*}
 e_{k,m}|V_{1,p}^{(2)}
 &=&
 p^{-k+\frac72}\, e_{k,m}|\tilde{V}_{1,1}(p^2),
\\
 e_{k,m}|V_{2,p}^{(2)}
 &=&
 e_{k,m}|\tilde{V}_{2,0}(p^2).
\end{eqnarray*}
Because we defined $E_{k,m} = \iota_{\mathcal{M}}(E_{k,\mathcal{M}})$,
we have
\begin{eqnarray*}
 E_{k,\frac{m p^2}{d^2}}(\tau,dz)
 &=&
 \iota_{\mathcal{M}[\smat{p}{ }{ }{1}]}(E_{k,\mathcal{M}[{^t W_d}^{-1} \smat{p}{ }{ }{1}]}( * , * W_d))(\tau,z),
 \\
 E_{k,\frac{m}{d^2}}(\tau,pdz)
 &=&
 \iota_{\mathcal{M}[\smat{p}{ }{ }{1}]}(E_{k,\mathcal{M}[{^t W_d}^{-1} ]}( * , * W_d \smat{p}{ }{ }{1}))(\tau,z)
\end{eqnarray*}
and
\begin{eqnarray*}
 \\
 E_{k,\frac{m}{p^2 d^2}}(\tau,p^2 dz)
 &=&
 \iota_{\mathcal{M}[\smat{p}{ }{ }{1}]}(E_{k,\mathcal{M}[{^t W_d}^{-1} X^{-1}\smat{p}{ }{ }{1}^{-1}]}( * , * W_d \smat{p}{ }{ }{1} {^t X} \smat{p}{ }{ }{1}))(\tau,z).
\end{eqnarray*}

Now we shall calculate $e_{k,m}|V_{1,p}^{(2)}$.
Due to Lemma~\ref{lemma:E_lambda} and due to the above identities, we have
\begin{eqnarray*}
 e_{k,m}|V_{1,p}^{(2)}
 &=&
 A_1 + A_2 + A_3,
\end{eqnarray*}
where
\begin{eqnarray*}
 A_1
 &:=&
 \sum_{d|f}
 \{
  p^{2k-4} E_{k,\frac{m}{d^2}}(\tau,pdz)
  + p^{2k-3} E_{k,\frac{m p^2}{d^2}}(\tau,d z)
 \}\, g_k\!\left(\frac{m}{d^2} \right), \\
 A_2
 &:=&
  \sum_{\begin{smallmatrix} d|f \\  \frac{f}{d} \equiv 0\!\! \mod p \end{smallmatrix}}
 \{
  E_{k,\frac{m}{p^2d^2}}(\tau,p^2dz)
  + p E_{k,\frac{m}{d^2}}(\tau,p d z)
 \} \, g_k\!\left(\frac{m}{d^2} \right) \\
 &&
 + \sum_{\begin{smallmatrix} d|f \\ \frac{f}{d} \not\equiv 0\!\! \mod p \end{smallmatrix}} 
  (p+1) E_{k,\frac{m}{d^2}}(\tau,p d z) \, g_k\!\left(\frac{m}{d^2} \right)
\end{eqnarray*}
and
\begin{eqnarray*} 
 A_3
 &:=&
 \sum_{ d|f } \left\{-p^{k-2} \left(\frac{D_0f^2/d^2}{p}\right) E_{k,\frac{m p^2}{d^2}}(\tau,d z) \right. \\
 &&
 \left. + p^{k-2} \left(\frac{D_0 f^2/d^2}{p}\right) E_{k,\frac{m}{d^2}}(\tau,p d z)
 \right\} 
 \, g_k\!\left(\frac{m}{d^2}\right).
\end{eqnarray*}

By using Lemma~\ref{lemma:gk} we have
\begin{eqnarray*}
 A_1
&=&
 p^{2k-4} \sum_{d|f} E_{k,\frac{m}{d^2}}(\tau,pdz)\, g_k\!\left(\frac{m}{d^2} \right)
 + \sum_{d|f} 
 E_{k,\frac{m p^2}{d^2}}(\tau,d z)\, g_k\!\left(\frac{mp^2}{d^2} \right) \\
 && 
 + \left(\frac{D_0}{p} \right)p^{k-2} 
 \sum_{\begin{smallmatrix} d|f \\ \frac{f}{d} \not\equiv 0\!\! \mod p \end{smallmatrix}} 
 E_{k,\frac{m p^2}{d^2}}(\tau,d z)\, g_k\!\left(\frac{m}{d^2} \right),
\end{eqnarray*}
\begin{eqnarray*}
 A_2
&=&
 \delta(p|f) p^{2k-3} \sum_{d|\frac{f}{p}}
 E_{k,\frac{m}{p^2d^2}}(\tau,p^2dz) \, g_k\!\left(\frac{m}{d^2p^2} \right) \\
 &&
 - \delta(p|f)
  \sum_{ d|\frac{f}{p} }
   p^{k-2} \left(\frac{D_0 f^2/d^2p^2}{p}\right)
   E_{k,\frac{m}{p^2d^2}}(\tau,p^2dz) \, g_k\!\left(\frac{m}{d^2p^2} \right) \\
 &&
 + \delta(p|f) p \sum_{\begin{smallmatrix} d|f \\ \frac{f}{d}\equiv 0 \!\! \mod p \end{smallmatrix}}
 E_{k,\frac{m}{d^2}}(\tau,p d z) \, g_k\!\left(\frac{m}{d^2} \right) \\
 &&
 + p \sum_{\begin{smallmatrix} d|f \\ \frac{f}{d}\not\equiv 0\!\! \mod p \end{smallmatrix}}
 E_{k,\frac{m}{d^2}}(\tau,p d z) \, g_k\!\left(\frac{m}{d^2} \right)
 + \sum_{\begin{smallmatrix} d|f \\ \frac{f}{d}\not\equiv 0\!\! \mod p \end{smallmatrix}}
  E_{k,\frac{m}{d^2}}(\tau,p d z) \, g_k\!\left(\frac{m}{d^2} \right) 
\\
&=&
 \delta(p|f) p^{2k-3} \sum_{d|\frac{f}{p}}
 E_{k,\frac{m}{p^2d^2}}(\tau,p^2dz) \, g_k\!\left(\frac{m}{d^2p^2} \right) \\
 &&
 - \delta(p|f) \left(\frac{D_0}{p}\right) p^{k-2}
  \sum_{\begin{smallmatrix} d|\frac{f}{p} \\ \frac{f}{dp}\not\equiv 0\!\! \mod p  \end{smallmatrix}}
   E_{k,\frac{m}{p^2d^2}}(\tau,p^2dz) \, g_k\!\left(\frac{m}{d^2p^2} \right) \\
 &&
 + p \sum_{ d|f }
 E_{k,\frac{m}{d^2}}(\tau,p d z) \, g_k\!\left(\frac{m}{d^2} \right)
 + \sum_{\begin{smallmatrix} d|pf \\ \frac{pf}{d}\not\equiv 0\!\! \mod p \end{smallmatrix}} 
   E_{k,\frac{p^2m}{d^2}}(\tau, d z) \, g_k\!\left(\frac{p^2m}{d^2} \right),
\end{eqnarray*}
where we used the identities
\begin{eqnarray*}
 \left(\frac{D_0 f^2/d^2p^2}{p}\right)
 &=&
 \left(\frac{D_0}{p}\right) \delta\!\left(p {\not|}  \frac{f}{pd}\right)
\end{eqnarray*}
and
\begin{eqnarray*}
 \delta(p|f)\, p \!\!\!\! \sum_{\begin{smallmatrix} d|f \\ \frac{f}{d}\equiv 0 \!\! \mod p \end{smallmatrix}}
 +\
 p \!\!\!\! \sum_{\begin{smallmatrix} d|f \\ \frac{f}{d}\not\equiv 0\!\! \mod p \end{smallmatrix}}
 &=&
 p  \sum_{ d|f } \ ,
\end{eqnarray*}
and we have
\begin{eqnarray*}
 A_3 &=&
 - \left(\frac{D_0}{p}\right) p^{k-2} 
 \sum_{\begin{smallmatrix} d|f \\ \frac{f}{d}\not\equiv 0\!\! \mod p \end{smallmatrix} }
  E_{k,\frac{m p^2}{d^2}}(\tau,d z)  \, g_k\!\left(\frac{m}{d^2}\right)
 \\
 &&
 + \left(\frac{D_0}{p}\right) p^{k-2} 
 \sum_{\begin{smallmatrix} d|f \\ \frac{f}{d}\not\equiv 0\!\! \mod p \end{smallmatrix} }
  E_{k,\frac{m}{d^2}}(\tau,p d z) \, g_k\!\left(\frac{m}{d^2}\right) .
\end{eqnarray*}
Thus, due to Proposition~\ref{prop:fourier_jacobi}, we obtain
\begin{eqnarray*}
 e_{k,m}|V_{1,p}^{(2)}
&=&
  p^{2k-4} \sum_{d|f} E_{k,\frac{m}{d^2}}(\tau,pdz) \, g_k\!\left(\frac{m}{d^2} \right)
 + p \sum_{ d|f }
 E_{k,\frac{m}{d^2}}(\tau,p d z) \, g_k\!\left(\frac{m}{d^2} \right) \\
 && 
 + \sum_{d|f} 
 E_{k,\frac{m p^2}{d^2}}(\tau,d z) \, g_k\!\left(\frac{mp^2}{d^2} \right)
 + \sum_{\begin{smallmatrix} d|pf \\ \frac{pf}{d}\not\equiv 0\!\! \mod p  \end{smallmatrix}} 
   E_{k,\frac{p^2m}{d^2}}(\tau, d z) \, g_k\!\left(\frac{p^2m}{d^2} \right)
\\
 &&
 + \delta(p|f)\, p^{2k-3} \sum_{d|\frac{f}{p}}
 E_{k,\frac{m}{p^2d^2}}(\tau,p^2dz) \, g_k\!\left(\frac{m}{d^2p^2} \right) \\
 &&
 - \delta(p|f)\, \left(\frac{D_0}{p}\right) p^{k-2}
  \sum_{\begin{smallmatrix} d|\frac{f}{p} \\ \frac{f}{dp}\not\equiv 0\!\! \mod p \end{smallmatrix}}
   E_{k,\frac{m}{p^2d^2}}(\tau,p^2dz) \, g_k\!\left(\frac{m}{d^2p^2} \right) \\
\\
 &&
 + \left(\frac{D_0}{p}\right) p^{k-2} 
 \sum_{\begin{smallmatrix} d|f \\ \frac{f}{d}\not\equiv 0\!\! \mod p \end{smallmatrix} }
  E_{k,\frac{m}{d^2}}(\tau,p d z) \, g_k\!\left(\frac{m}{d^2}\right) \\
 &=&
  p^{2k-4} \sum_{d|f} E_{k,\frac{m}{d^2}}(\tau,pdz) \, g_k\!\left(\frac{m}{d^2} \right)
 + p \sum_{ d|f }
 E_{k,\frac{m}{d^2}}(\tau,p d z) \, g_k\!\left(\frac{m}{d^2} \right) \\
 && 
 + \sum_{d|fp}
 E_{k,\frac{m p^2}{d^2}}(\tau,d z) \, g_k\!\left(\frac{mp^2}{d^2} \right) \\
 &&
 + \delta(p|f)\, p^{2k-3} \sum_{d|\frac{f}{p}}
 E_{k,\frac{m}{p^2d^2}}(\tau,p^2dz) \, g_k\!\left(\frac{m}{d^2p^2} \right) \\
 &&
 + \left(\frac{D_0 f^2}{p}\right) p^{k-2} 
 \sum_{d|f}
  E_{k,\frac{m}{d^2}}(\tau,p d z) \, g_k\!\left(\frac{m}{d^2}\right)
\\
 &=&
 p \left( p^{2k-5} + 1 \right) e_{k,m}(\tau,pz)
\\ 
 && + e_{k,mp^2}(\tau,z) 
 + p^{2k-3} e_{k,\frac{m}{p^2}}(\tau,p^2 z)
 + \left(\frac{-m}{p}\right) p^{k-2} e_{k,m}(\tau,pz).
\end{eqnarray*}
Hence we obtain the identity for $e_{k,m}|V_{1,p}^{(2)}$.

Because the calculation of $e_{k,m}|V_{2,p}^{(2)}$ is an analogue to the case of $e_{k,m}|V_{1,p}^{(2)}$,
we omit the detail.
\qed

\subsection{Proof of Corollary \ref{cor:deg3}}\label{ss:cor:deg3}
In this subsection we shall show Corollary~\ref{cor:deg3}.

By the definition of $V_{1,p}^{(2)}$, $T_{2,1}(p^2)$ and of $S_p^{(2)}$
and by substituting $z=0$ to $e_{k,m}^{(2)}(\tau,z)$, we obtain
\begin{eqnarray*}
 \left(e_{k,m}^{(2)}( * ,0)|T_{2,1}(p^2)\right)(\tau) 
 &=&
 \left(e_{k,m}^{(2)}|V_{1,p}^{(2)}\right)(\tau,0) \\
 &=&
 \left(e_{k,m}^{(2)}|\left(p\left(p^{2k-5}+1\right) + S_p^{(2)} \right)\right)(\tau,0) \\
 &=&
 p(p^{2k-5}+1) e_{k,m}^{(2)}(\tau,0) + 
 e_{k,p^2m}^{(2)}(\tau,0) \\
 &&
 + p^{k-2} \left(\frac{-m}{p}\right) e_{k,m}^{(2)}(\tau,0) + p^{2k-3} e_{k,\frac{m}{p^2}}^{(2)}(\tau,0).
\end{eqnarray*}
Therefore
\begin{eqnarray*}
 &&
 \mathcal{H}_{k-\frac12}^{(3)}\!\!\left.\left(\begin{pmatrix} \tau & 0 \\ 0 & \omega \end{pmatrix}\right)\right|_{\tau}T_{2,1}(p^2)
 \\ &=&
 \sum_m \left(\left(e_{k,m}^{(2)}( * ,0)|T_{2,1}(p^2)\right)(\tau)\right) e^{2\pi \sqrt{-1} m \omega}
 \\ &=&
 \sum_m \left(
   p(p^{2k-5}+1) e_{k,m}^{(2)}(\tau,0) \right.\\
   &&
   \quad \quad
   + \left.  e_{k,p^2m}^{(2)}(\tau,0) + p^{k-2} \left(\frac{-m}{p}\right) e_{k,m}^{(2)}(\tau,0)
           + p^{2k-3} e_{k,\frac{m}{p^2}}^{(2)}(\tau,0) \right)
   e^{2\pi \sqrt{-1} m \omega}
 \\ &=&
 \mathcal{H}_{k-\frac12}^{(3)}\!\! \left.\left(\begin{pmatrix} \tau & 0 \\ 0 & \omega \end{pmatrix}\right)\right|_{\omega}
 \left(p(p^{2k-5}+1) + T_1(p^2) \right).
\end{eqnarray*}
Similarly, we have
\begin{eqnarray*}
 &&
 \mathcal{H}_{k-\frac12}^{(3)}\!\!\left.\left(\begin{pmatrix} \tau & 0 \\ 0 & \omega \end{pmatrix}\right)\right|_{\tau}T_{2,2}(p^2) \\
 &=&
 \mathcal{H}_{k-\frac12}^{(3)}\!\!\left.\left(\begin{pmatrix} \tau & 0 \\ 0 & \omega \end{pmatrix}\right)\right|_{\omega}
 ((p^{2k-4}-p^{2k-6}) + p(p^{2k-5} + 1) T_1(p^2)).
\end{eqnarray*}
\qed

\section{Appendix}

\subsection{Values of some generalized Gauss sums}
In this subsection we shall give the values of generalized Gauss sums
$G_{\mathcal{M}}^{2,1}(\lambda)$ and $G_{\mathcal{M}}^{2,0}(\lambda)$, which are defined in \S\ref{ss:tilde_kij}.
For odd primes these values follow from the result in \cite{Sa}.
We need these values for the calculation of $\tilde{K}_{i,j}^{\beta}$ in \S\ref{ss:tilde_kij_n2}.

In this subsection we fix a matrix $\mathcal{M} = \smat{*  }{ * }{ * }{1}\in \mbox{Sym}_2^*$.
\begin{lemma}\label{lemma:gauss21}
Let $p$ be an odd prime and 
$X = \smat{1}{0}{x}{1} \in \Z^{(2,2)}$ be a matrix such that $\mathcal{M} \equiv {^t X} \smat{u}{0}{0}{1} X \!\! \mod p$.
Then, for $\lambda \in \Z^{(2,2)}$ we have
\begin{eqnarray*}
G_{\mathcal{M}}^{2,1}(\lambda) 
&=& 
   \sum_{\begin{smallmatrix}x \in Sym_2(\Z/p\Z) \\ rank_p(x) = 1
          \end{smallmatrix}}e\left(\frac{1}{p}\mathcal{M} {^t \lambda } x  \lambda\right) \\
&=&
\begin{cases}
  p^2 -1
 & \mbox{if } \mbox{rank}_p(\lambda ^tX) = 0 ,\\
  p^2 -1
 & \begin{array}{l} 
   \mbox{if } \mbox{rank}_p(\lambda ^tX) = 1 \mbox{ and } \lambda ^tX \equiv (\lambda',t \lambda')\!\!\mod p \\
   \mbox{ with } t \mbox{ such that } u+t^2 \in p\Z ,
   \end{array}
\\ 
  -1
 & \begin{array}{l}
   \mbox{if } \mbox{rank}_p(\lambda ^tX) = 1 \mbox{ and } \lambda ^tX \equiv (\lambda',t \lambda')\!\!\mod p\\
   \mbox{ with } t \mbox{ such that } u+t^2 \not \in p\Z ,
   \end{array}
\\
  -1
 & \mbox{if } \lambda ^tX \equiv (0, \lambda')\!\!\mod p \mbox{ with } \lambda' \not \in (p\Z)^{(2,1)} ,\\
  \left(\frac{-u}{p}\right)p -1
 & \mbox{if } \mbox{rank}_p(\lambda ^tX) = 2 .
\end{cases}
\end{eqnarray*}
For $p=2$ there exists a matrix $X = \smat{1}{0}{x}{1}$ such that 
$\mathcal{M}  = {^t X} \smat{u}{\frac{r}{2}}{\frac{r}{2}}{1} X $
with $r = 0$ or $1$.
Then, for $\lambda \in \Z^{(2,2)}$ we have
\begin{eqnarray*}
G_{\mathcal{M}}^{2,1}(\lambda) 
&=& 
   \sum_{\begin{smallmatrix}x \in Sym_2(\Z/2\Z) \\ rank_2(x) = 1
          \end{smallmatrix}}e\left(\frac{1}{2}\mathcal{M} {^t \lambda } x  \lambda\right) \\
&=&
 \begin{cases}
  3 
  & \mbox{if } \mbox{rank}_2(\lambda ^t X) = 0 ,\\
  -1 + 2(1 + (-1)^{u+t})
  & 
   \begin{array}{l}
     \mbox{if } \mbox{rank}_2(\lambda ^t X) = 1 \\
     \mbox{ and }\lambda ^tX \equiv (\lambda',t \lambda') \!\! \mod 2 \\
     \mbox{ and }{^t X}^{-1}\mathcal{M} X^{-1} =
      \left( \begin{smallmatrix}
        u&0\\0&1 
      \end{smallmatrix} \right) ,
   \end{array} \\
  -1 + 2(1+(-1)^u) 
  & 
   \begin{array}{l}
     \mbox{if } \mbox{rank}_2(\lambda ^t X) = 1 \\
     \mbox{ and }\lambda ^tX \equiv (\lambda',t \lambda') \!\! \mod 2 \\
     \mbox{ and }{^t X}^{-1}\mathcal{M} X^{-1} =
      \left( \begin{smallmatrix}
        u&\frac12\\\frac12&1 
      \end{smallmatrix} \right) ,
   \end{array} \\
  -1
  &
  \begin{array}{l}
     \mbox{if } \mbox{rank}_2(\lambda ^t X) = 1 \\
     \mbox{ and }\lambda ^tX \equiv (0, \lambda') \!\! \mod 2 ,
   \end{array} \\
  -1
  &
  \begin{array}{l}
     \mbox{if } \mbox{rank}_2(\lambda ^t X) = 2 \\
     \mbox{ and }{^t X}^{-1}\mathcal{M} X^{-1} =
      \left( \begin{smallmatrix}
        u&0\\0&1 
      \end{smallmatrix} \right) ,
   \end{array} \\
  1 - 2(1-(-1)^u)
  &
  \begin{array}{l}
     \mbox{if } \mbox{rank}_2(\lambda ^t X) = 2 \\
     \mbox{ and }{^t X}^{-1}\mathcal{M} X^{-1} =
      \left( \begin{smallmatrix}
        u&\frac12\\ \frac12&1 
      \end{smallmatrix} \right).
   \end{array}
 \end{cases}  
\end{eqnarray*}
\end{lemma}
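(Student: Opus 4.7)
The plan is to first reduce to the normalized form of $\mathcal{M}$ via the change of variable $\mu := \lambda\,{}^tX$, and then to analyze the resulting sum. For $p = 2$ the identity $\mathcal{M} = {}^tX \mathcal{D} X$ is exact (with $\mathcal{D} = \smat{u}{r/2}{r/2}{1}$), while for odd $p$ one has $\mathcal{M} \equiv {}^tX \mathcal{D} X \pmod p$ in the half-integral sense (the diagonal difference lies in $p\Z$ and the off-diagonal difference lies in $\frac{p}{2}\Z$). In both situations a short computation using the cyclic property of the trace gives
\[
 \mbox{tr}(\mathcal{M}\,{}^t\lambda x \lambda) \equiv \mbox{tr}(\mathcal{D}\,{}^t\mu x \mu) \pmod p,
\]
so $G_{\mathcal{M}}^{2,1}(\lambda) = G_{\mathcal{D}}^{2,1}(\mu)$. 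Hence one may replace $(\mathcal{M}, \lambda)$ by $(\mathcal{D}, \mu)$ from the outset, and the statements of the lemma are read off from $\mu \pmod p$.

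For odd $p$ the crucial fact is that every rank-$1$ symmetric matrix over $\Z/p\Z$ is uniquely $x = c\, w\,{}^tw$ with $c \in (\Z/p\Z)^\times$ and $w$ representing a class in $\mathbb{P}^1(\Z/p\Z)$, which accounts for the expected count $(p-1)(p+1) = p^2 - 1$. Setting $B := \mu \mathcal{D}\,{}^t\mu$, one rewrites
\[
 G_{\mathcal{D}}^{2,1}(\mu) = \sum_{w \in \mathbb{P}^1(\Z/p\Z)} \sum_{c \in (\Z/p\Z)^\times} e\!\left(\frac{c}{p}\,{}^twBw\right) = -(p+1) + p\, N_0,
\]
where $N_0 := \#\{w \in \mathbb{P}^1(\Z/p\Z) : {}^twBw \equiv 0 \pmod p\}$. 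It remains to compute $N_0$ from the structure of $\mu$. When $\mbox{rank}_p(\mu) = 0$, or $\mu \equiv (\lambda', t\lambda')$ with $u + t^2 \equiv 0$, one has $B \equiv 0$ and $N_0 = p+1$; when $B$ is nonzero of rank one (the remaining sub-cases of $\mbox{rank}_p(\mu) = 1$) there is exactly one isotropic direction, so $N_0 = 1$; and when $\mbox{rank}_p(\mu) = 2$, $B$ and $\mathcal{D}$ represent the same quadratic form up to $\mbox{GL}_2(\Z/p\Z)$, so $N_0$ equals the number of isotropic lines of $u X^2 + Y^2$, namely $1 + \left(\frac{-u}{p}\right)$. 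Substitution reproduces the five stated values for odd $p$.

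For $p = 2$ diagonalization fails, but the set of rank-$1$ symmetric matrices in $\mbox{Sym}_2(\Z/2\Z)$ consists of just three elements: $\smat{1}{0}{0}{0}$, $\smat{0}{0}{0}{1}$, and $\smat{1}{1}{1}{1}$. Because $\mathcal{D}$ has integer diagonal and half-integer off-diagonal, the quantities $B_{11}$, $B_{22}$, and $2B_{12}$ are all integers, and the Gauss sum reduces to
\[
 G_{\mathcal{D}}^{2,1}(\mu) = (-1)^{B_{11}} + (-1)^{B_{22}} + (-1)^{B_{11} + 2B_{12} + B_{22}}.
\]
Each exponent is an explicit polynomial in the entries of $\mu$ and in $u$, $r$, reducing mod $2$ via $a^2 \equiv a$. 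A finite enumeration over parity-types of $\mu$ (rank $0$; rank $1$ of type $(\lambda', t\lambda')$; rank $1$ of type $(0, \lambda')$; rank $2$), further split according to $r \in \{0, 1\}$, matches the six remaining cases in the statement.

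The main obstacle is precisely the $p = 2$ analysis: without diagonalization one must separate the two normal forms $r \in \{0, 1\}$ and within each dispatch on the rank and column-pattern of $\mu \pmod 2$. The content is purely combinatorial, but the case list is long. In contrast, the odd-$p$ case follows cleanly from the geometry of isotropic vectors on $\mathbb{P}^1(\Z/p\Z)$ and requires no such enumeration.
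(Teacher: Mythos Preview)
Your argument is correct. The reduction to the diagonal form via $\mu=\lambda\,{}^tX$ is clean, the parametrization of rank-$1$ symmetric matrices as $c\,w\,{}^tw$ with $(c,[w])\in(\Z/p\Z)^\times\times\mathbb{P}^1(\Z/p\Z)$ is valid (once a representative of each projective class is fixed), and your formula $G=-(p+1)+pN_0$ together with the isotropic-line count reproduces all five values in the odd-$p$ table. For $p=2$ your explicit three-term expression $(-1)^{B_{11}}+(-1)^{B_{22}}+(-1)^{B_{11}+2B_{12}+B_{22}}$ is exactly right, and the remaining enumeration is finite and mechanical.

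The paper takes a different route for odd $p$: it simply invokes Saito's general formula for such Gauss sums (Proposition~1.12 of \cite{Sa}) rather than computing directly. Your approach is more elementary and self-contained, exploiting the fact that in the $2\times2$ case the rank-$1$ locus is parametrized by $\mathbb{P}^1$, so the whole calculation reduces to counting isotropic lines of a binary quadratic form. This is a genuine simplification over appealing to Saito's machinery, which is designed for arbitrary size and is correspondingly heavier. For $p=2$ both the paper and you proceed by direct enumeration; the paper omits the details entirely, whereas you at least isolate the key identity and indicate how the case split goes. Either way the $p=2$ case is pure bookkeeping, and your sketch is adequate.
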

\begin{proof}
For odd prime $p$ this lemma follows from \cite[Proposition 1.12]{Sa}.

For the case $p=2$ we can directly calculate $G_{\mathcal{M}}^{2,1}(\lambda)$.
The details is omitted here.
\end{proof}

\begin{lemma}
Let $p$ be an odd prime and 
$X = \smat{1}{0}{x}{1} \in \Z^{(2,2)}$ be a matrix such that $\mathcal{M} \equiv {^t X} \smat{u}{0}{0}{1} X \!\! \mod p$.
Then, for $\lambda \in \Z^{(2,2)}$ we have
\begin{eqnarray*}
 &&
G_{\mathcal{M}}^{2,0}(\lambda) \\
&=& 
   \sum_{\begin{smallmatrix}x \in Sym_2(\Z/p\Z) \\ rank_p(x) = 2
         \end{smallmatrix}}e\left(\frac{1}{p}\mathcal{M} {^t \lambda } x  \lambda\right) \\
&=& 
\begin{cases} 
 p^2(p-1) 
& \mbox{if } \mbox{rank}_p(\lambda ^tX) = 0 ,\\
 p^2(p-1)
&
 \begin{array}{l}
 \mbox{if }  \mbox{rank}_2(\lambda^t X) = 1 \\
 \mbox{ and }\lambda ^tX \equiv (\lambda',t \lambda') \!\! \mod p \mbox{ with } t \mbox{ such that } u+t^2 \in p\Z ,
 \end{array}
  \\
 0
&
 \begin{array}{l}
 \mbox{if }  \mbox{rank}_2(\lambda^t X) = 1 \\
 \mbox{ and }\lambda ^tX \equiv (\lambda',t \lambda') \!\! \mod p \mbox{ with } t \mbox{ such that } u+t^2 \not \in p\Z ,
 \end{array}
 \\
 0
& \mbox{if } \lambda ^tX \equiv (0, \lambda') \!\! \mod p\mbox{ with }
              \lambda' \not \in (p\Z)^{(2,1)}  , \\
 -\left(\frac{-u}{p} \right) p 
& \mbox{if } \mbox{rank}_p(\lambda ^tX) = 2.
\end{cases}
\end{eqnarray*}
For $p=2$ there exists a matrix $X = \smat{1}{0}{x}{1}$ such that 
$\mathcal{M}  = {^t X} \smat{u}{\frac{r}{2}}{\frac{r}{2}}{1} X $
with $r = 0$ or $1$.
Then, for $\lambda \in \Z^{(2,2)}$ we have
\begin{eqnarray*}
G_{\mathcal{M}}^{2,0}(\lambda) 
&=&
   \sum_{\begin{smallmatrix}x \in Sym_2(\Z/2\Z) \\ rank_2(x) = 1
          \end{smallmatrix}}e\left(\frac{1}{2}\mathcal{M} {^t \lambda } x  \lambda\right) \\
&=&
 \begin{cases}
  4 
  & \mbox{if } \mbox{rank}_2(\lambda ^t X) = 0 ,\\
  2(1 + (-1)^{u+t})
  & 
   \begin{array}{l}
     \mbox{if } \mbox{rank}_2(\lambda ^t X) = 1 \\
     \mbox{ and }\lambda ^tX \equiv (\lambda',t \lambda') \!\! \mod 2 \\
     \mbox{ and }{^t X}^{-1}\mathcal{M} X^{-1} =
      \left( \begin{smallmatrix}
        u&0\\0&1 
      \end{smallmatrix} \right) ,
   \end{array} \\
  2(1+(-1)^u) 
  & 
   \begin{array}{l}
     \mbox{if } \mbox{rank}_2(\lambda ^t X) = 1 \\
     \mbox{ and }\lambda ^tX \equiv (\lambda',t \lambda') \!\! \mod 2 \\
     \mbox{ and }{^t X}^{-1}\mathcal{M} X^{-1} =
      \left( \begin{smallmatrix}
        u&\frac12\\\frac12&1 
      \end{smallmatrix} \right) ,
   \end{array} \\
  0
  &
  \begin{array}{l}
     \mbox{if } \mbox{rank}_2(\lambda ^t X) = 1 \\
     \mbox{ and }\lambda ^tX \equiv (0, \lambda') \!\! \mod 2 ,
   \end{array} \\
  0
  &
  \begin{array}{l}
     \mbox{if } \mbox{rank}_2(\lambda ^t X) = 2 \\
     \mbox{ and }{^t X}^{-1}\mathcal{M} X^{-1} =
      \left( \begin{smallmatrix}
        u&0\\0&1 
      \end{smallmatrix} \right) ,
   \end{array} \\
  -2(-1)^u
  &
  \begin{array}{l}
     \mbox{if } \mbox{rank}_2(\lambda ^t X) = 2 \\
     \mbox{ and }{^t X}^{-1}\mathcal{M} X^{-1} =
      \left( \begin{smallmatrix}
        u&\frac12\\ \frac12&1
      \end{smallmatrix} \right) .
   \end{array}
 \end{cases}  
\end{eqnarray*}
\end{lemma}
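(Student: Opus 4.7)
The argument runs in parallel to the proof of the preceding lemma on $G_{\mathcal{M}}^{2,1}(\lambda)$; only the rank condition on the summation variable differs. The starting observation is that
\[
 G_{\mathcal{M}}^{2,0}(\lambda) \;=\; G_{M_0}^{2,0}(\lambda\,{^tX}),
\]
where $M_0 := {^tX}^{-1}\mathcal{M} X^{-1}$ is the normal form of $\mathcal{M}$ modulo $p$ provided by the hypothesis on $X$. This follows from the identity $\operatorname{tr}(\mathcal{M}\,{^t\lambda}\,x\,\lambda) = \operatorname{tr}(M_0\,{^ty}\,x\,y)$ with $y = \lambda\,{^tX}$, and is exactly why the conclusion of the lemma is stated in terms of the rank and structure of $\lambda\,{^tX}$ reduced modulo $p$.

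For odd $p$, the matrix $M_0$ is diagonal modulo $p$, and the sum
\[
 \sum_{\substack{x = {^tx} \in (\Z/p\Z)^{(2,2)}\\ \operatorname{rank}_p(x) = 2}} e\!\left(\tfrac{1}{p}\operatorname{tr}(M_0\,{^ty}\,x\,y)\right)
\]
is a generalized Gauss sum of the type classified by Saito. I would read off its value directly from \cite[Proposition 1.12]{Sa}, according to the rank of $y$ modulo $p$ and, in the rank $2$ case, the ratio of the discriminants of $M_0$ and the quadratic form on the image of $y$. This produces the factor $-\left(\frac{-u}{p}\right) p$ in the full-rank case and the vanishing in each degenerate rank $1$ subcase.

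For $p=2$, the set of rank $2$ symmetric $2\times 2$ matrices modulo $2$ consists of just four elements, namely $\smat{1}{0}{0}{1}$, $\smat{0}{1}{1}{0}$, $\smat{1}{1}{1}{0}$ and $\smat{0}{1}{1}{1}$. For each of the two normal forms $M_0 = \smat{u}{0}{0}{1}$ and $M_0 = \smat{u}{1/2}{1/2}{1}$, and each congruence class of $y$ modulo $2$, I would tabulate $\operatorname{tr}(M_0\,{^ty}\,x\,y)$ modulo $2$ over these four $x$'s and sum the corresponding signs; this is a finite and mechanical calculation that reproduces the stated values.

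The one substantive step occurs in the rank $1$ analysis. Writing $y \equiv (\lambda',\,t\lambda') \!\!\mod p$ gives ${^ty}\,x\,y \equiv ({^tv}\,x\,v)\,\lambda'\,{^t\lambda'}$ with $v = {^t(1,t)}$, so the Gauss sum collapses to a one-variable quadratic Gauss sum whose nonvanishing is governed by the scalar $u + t^2$ modulo $p$. This dichotomy is what drives the split between $p^2(p-1)$ and $0$ in the odd-prime case and between $2(1 + (-1)^{u+t})$ and $0$ when $p = 2$, and is the only place where the structure of the image quadratic form really enters. Once this reduction is in hand, the remaining verifications are bookkeeping.
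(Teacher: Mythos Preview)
Your approach matches the paper's: for odd $p$ it simply appeals to Saito's results on generalized Gauss sums, and for $p=2$ it says the values are obtained by a direct finite tabulation, with details omitted. Your write-up is in fact more explicit than the paper's (you spell out the reduction to the normal form $M_0$, enumerate the four rank-$2$ symmetric matrices over $\Z/2\Z$, and analyze the rank-$1$ degeneration), so the strategy is identical.

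Two small corrections. First, the paper invokes \cite[Theorem~1.3]{Sa} for the rank-$2$ sum $G_{\mathcal{M}}^{2,0}$, not Proposition~1.12; the latter is the result used for the rank-$1$ sum $G_{\mathcal{M}}^{2,1}$ in the preceding lemma, so you should adjust your citation. Second, your description of the rank-$1$ collapse is slightly imprecise: writing $y \equiv \lambda'\,(1,t)$ one finds $\operatorname{tr}(M_0\,{^ty}\,x\,y) = (u+t^2)\,{^t\lambda'}\,x\,\lambda'$, and the resulting sum over rank-$2$ symmetric $x$ is not literally a one-variable quadratic Gauss sum but rather a count (there are $p^2(p-1)$ such $x$) when $u+t^2\equiv 0$, versus a character sum over the values ${^t\lambda'}\,x\,\lambda'$ that vanishes when $u+t^2\not\equiv 0$. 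The dichotomy you identify is nonetheless the right one.
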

\begin{proof}
For odd prime $p$, this lemma follows from \cite[Theorem 1.3]{Sa}.
For the case $p=2$ we can directly calculate $G_{\mathcal{M}}^{2,0}(\lambda)$.
We omit the detail.
\end{proof}

\subsection{Index-shift maps $V_{1,p}^{(2)}$, $V_{2,p}^{(2)}$ and Fourier coefficients}\label{ss:hecke_V_12}
Let
\begin{eqnarray*}
 \phi(\tau,z)
 &=&
 \sum_{\begin{smallmatrix}
        T \in Sym_2^*,\, S \in \Z^{(2,1)} \\
        4 T m - S {^t S} \geq 0
       \end{smallmatrix}} C(T,S)\, e(T\tau + S{^t z})
\end{eqnarray*}
be the Fourier expansion of $\phi \in J_{k-\frac12,m}^{(2)*}$.
In this subsection we shall express Fourier coefficients of
$\phi | V_{1,p}^{(2)}$ and $\phi | V_{2,p}^{(2)}$ as a linear combination of $C(T,S)$.

For any prime $p$ we put
\begin{eqnarray*}
 R(p)
 &:=&
 \left\{
  \smat{1}{x}{0}{1},
 \,
  \smat{0}{1}{-1}{0}
 \, | \,
  x \mbox{ mod } p
 \right\},
\\
 R(p^2)
 &:=&
 \left\{
  \smat{1}{x}{0}{1},
 \,
  \smat{py}{1}{-1}{0}
 \, | \,
  x \mbox{ mod } p^2, \,
  y \mbox{ mod } p
 \right\}.
\end{eqnarray*}

Then, the action of the index-shift maps $V_{1,p}^{(2)}$ and $V_{2,p}^{(2)}$
can be written as
\begin{eqnarray*}
 (\phi | V_{1,p}^{(2)})(\tau,z)
 &=&
 \sum_{\begin{smallmatrix}
        T \in Sym_2^*,\, S \in \Z^{(2,1)} \\
        4 T m p^2 - S {^t S} \geq 0
       \end{smallmatrix}} \sum_{i,j}\alpha_{1,i,j}(T,S)\, e(T \tau + S z), \\
 (\phi | V_{2,p}^{(2)})(\tau,z)
 &=&
 \sum_{\begin{smallmatrix}
        T \in Sym_2^*,\, S \in \Z^{(2,1)} \\
        4 T m p^2 - S {^t S} \geq 0
       \end{smallmatrix}} \sum_{i,j}\alpha_{2,i,j}(T,S)\, e(T \tau + S z),
\end{eqnarray*} 
where, for odd prime $p$, we have
\begin{eqnarray*}
 \alpha_{1,1,0}(T,S) 
  & = & 
 p^{2k-4}  \sum_{U \in R(p)} C\left( 
   \left( \begin{array}{cc} p^{-1} & \\ & 1 \end{array} \right)
    U T ^{t}U 
   \left( \begin{array}{cc} p^{-1} & \\ & 1 \end{array} \right),
   \frac{1}{p}\left( \begin{array}{cc} p^{-1} & \\ & 1 \end{array} \right)
    US 
  \right) , \\
 \alpha_{1,1,1}(T,S) 
  & = &  
 \sum_{U \in R(p)} C\left(  
  \left( \begin{array}{cc} 1 & \\ & p \end{array} \right)
  U T ^{t}U 
  \left( \begin{array}{cc} 1 & \\ & p \end{array} \right),
  \frac{1}{p}\left( \begin{array}{cc} 1 & \\ & p \end{array} \right) 
  US
 \right) , \\
 \alpha_{1,2,0}(T,S) 
  &=& 
 \left\{ 
 \begin{array}{cl}
  \left( \frac{-a}{p}  \right) p^{k-2}\, C(T,\frac{1}{p}S) & 
  \mbox{if} \ p  {\not|}  a \ \mbox{and} \ p | \det 2 T, \  \\ 
   & \\
   \left( \frac{-c}{p}  \right) p^{k-2}\, C(T,\frac{1}{p}S) &
   \mbox{if} \ p | a \ \mbox{and} \ p | \det 2 T, \  \\
  & \\
  0 & \mbox{otherwise}, 
 \end{array}
\right.
\end{eqnarray*}

\begin{eqnarray*}
 \alpha_{2,0,0}(T,S)
  &=& 
 p^{4k-8} \ C\left(\frac{1}{p^2}T,\frac{1}{p^2}S \right) , \\
 \alpha_{2,0,1}(T,S) 
  &=& 
 p^{2k-5}\ \sum_{U \in R(p^2)} C\left(  
  \left( \begin{array}{cc} p^{-1} & \\ & p \end{array} \right)
  U T ^{t}U 
  \left( \begin{array}{cc} p^{-1} & \\ & p \end{array} \right),
  \frac{1}{p}\left( \begin{array}{cc} p^{-1} & \\ & p \end{array} \right)
  U S
  \right) , \\
 \alpha_{2,0,2}(T,S) & = & C(p^2 T,  S) , 
\end{eqnarray*}

\begin{eqnarray*}
 \alpha_{2,1,0}(T,S) 
  &=& 
 p^{3k-7}\ \sum_{U \in R(p)} 
 \left( \frac{-c_{U}}{p} \right) \\
 && \times C\left(  
 \left( \begin{array}{cc} p^{-1} & \\ & 1 \end{array} \right)
  U T ^{t}U 
 \left( \begin{array}{cc} p^{-1} & \\ & 1 \end{array} \right),
 \frac{1}{p}\left( \begin{array}{cc} p^{-1} & \\ & 1 \end{array} \right)
  U S
 \right) , \\
 && where \left( \begin{array}{cc} * & * \\ * & c_{U} 
 \end{array}\right) \ = \ U T ^{t}U , \\
 \alpha_{2,1,1}(T,S) 
  &=& 
 p^{k-3} 
 \ \sum_{U \in R(p)} 
  \left( \frac{-a_{U}}{p} \right)\\
 && \times C \left(  
  \left( \begin{array}{cc} 1 & \\ & p \end{array} \right)
  U T ^{t}U 
  \left( \begin{array}{cc} 1 & \\ & p \end{array} \right),
  \frac{1}{p}\left( \begin{array}{cc} 1 & \\ & p \end{array} \right)
  U S
 \right) , \\
 && where  \left( \begin{array}{cc} a_{U} & * \\ 
 * & * \end{array}\right) \ = \ U T ^{t}U , \\
 \alpha_{2,2,0}(T,S) 
  &=& 
 \left\{ \begin{array}{cl} 
 {}- p^{2k-6} \ C(T,\frac{1}{p}S) & \mbox{if} \ p  {\not|}  \det 2 T,  \\
  & \\
 (p-1) p^{2k-6} \ C(T,\frac{1}{p}S) &  \mbox{if} \ p | \det 2 T.
\end{array} \right.
\end{eqnarray*}
For $p=2$ we have the same $\alpha_{1,i,j}(T,S)$ and $\alpha_{2,i,j}(T,S)$ in the above formula
by replacing the condition $p|\det(2T)$ or $p {\not|} \det(2T)$
by $8 | \det(T)$ or $8 {\not|} \det(T)$. (cf.~\cite[section~4.2]{HI}.)

\subsection{The function $\tilde{K}_{i,j}^{j-i}$ for $n=2$}\label{ss:table_n2_2}
In this subsection we shall give some necessary data for the calculation of $\tilde{K}_{i,j}^{j-i}$,
namely we need these data for the proof of Lemma~\ref{lemma:E_omega}.

We put
\begin{eqnarray*}
L(p) &:=& \left\{ \smat{1}{ }{x}{1}, \smat{ }{-1}{1}{p} \, | \, x \mod p \right\}, \\
L(p^2) &:=& \left\{ \smat{1}{ }{x}{1}, \smat{ }{-1}{1}{py} \, | \, x \mod p^2,\, y \mod p  \right\}.
\end{eqnarray*}
Then we have the following table:
\begin{eqnarray*}
\begin{array}{|l||c|c|c|}
\hline
(i,j)
 & \displaystyle{\sum_{u_2\in \left(\Z/p\Z\right)^{(j-i,1)}}G_{\mathcal{M}}^{(j-i,0)}(\lambda + (0,u_2))}
 & L_{i,j}^*
 & \Gamma(\delta_{i,j})\backslash \Gamma_{\infty}^{(2)} \\ \hline \hline
(2,2)
 & 1
 & \smat{p\Z}{\Z}{p\Z}{\Z}
 & 1 \\ \hline
(1,1)
 & 1
 & \begin{cases} \smat{p\Z}{\Z}{\frac{1}{p}\Z}{\Z} & \mbox{ if } p|f
              \\ \smat{p\Z}{\Z}{\Z}{\Z} & \mbox{ if } p {\not|} f
   \end{cases}
 & L(p^2) \\ \hline
(0,0)
 & 1
 & \begin{cases} \smat{\frac{1}{p}\Z}{\Z}{\frac{1}{p}\Z}{\Z} & \mbox{ if } p|f
              \\ \Z^{(2,2)} & \mbox{ if } p {\not|} f
   \end{cases}
 & 1 \\ \hline
(1,2)
 & \begin{cases} 0 & \mbox{ if } \lambda \in p\Z \times \Z
              \\ p\left(\frac{D_0f^2}{p}\right) & \mbox{ if } \lambda \not \in p\Z \times \Z
   \end{cases}
 & \smat{p\Z}{\Z}{\Z}{\Z}
 & L(p) \\ \hline
(0,1)
 & \begin{cases} 0 & \mbox{ if } \lambda \in p\Z \times \Z
              \\ p\left(\frac{D_0f^2}{p}\right) & \mbox{ if } \lambda \not \in p\Z \times \Z
   \end{cases}
 & \begin{cases} \smat{\Z}{\Z}{\frac{1}{p}\Z}{\Z} & \mbox{ if } p|f
              \\ \Z^{(2,2)} & \mbox{ if } p {\not|} f
   \end{cases}
 & L(p) \\ \hline
(0,2)
 & p^2(p-1)
 & \Z^{(2,2)}
 & 1 \\ \hline
\end{array}
\end{eqnarray*}

\subsection{Certain summations}
In this subsection we shall give some formulas which are needed for the proof of
Lemma~\ref{lemma:E_lambda} and \ref{lemma:E_omega}.
Let notation be as in \S\ref{ss:kij} and \S\ref{ss:tilde_kij}.
\begin{lemma}\label{lemma:F1}
Let $F$ be a function on $\Z^{(2,2)}$. Then we obtain
\begin{eqnarray*}
 \sum_{\smat{A}{B}{0_2}{^t A ^{-1}} \in \Gamma(\delta_{1,2})\backslash \Gamma_{\infty}^{(2)}} \sum_{\lambda\in L_{1,2}^*}
 F({^t A} \lambda)
 &=&
 \sum_{\lambda \in \Z^{(2,2)}} F(\lambda) + p \sum_{\lambda \in \Z^{(2,2)}} F(\lambda \smat{p}{0}{0}{1}),
\end{eqnarray*}
if the above summations are absolutely convergent.
\end{lemma}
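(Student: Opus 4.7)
The plan is to reduce the outer sum to a sum over a finite set of explicit $GL_2(\Z)$ representatives, and then to carry out an elementary counting that makes the two terms on the right emerge from the two ``types'' of representatives. First, I would observe that the full translation subgroup $\{\smat{1_2}{S}{0}{1_2} \mid S = {^tS} \in \Z^{(2,2)}\}$ lies in $\Gamma(\delta_{1,2})$ (because $1_2 \in \delta_{1,2}GL_2(\Z)\delta_{1,2}^{-1}\cap GL_2(\Z)$), so the map $\smat{A}{B}{0}{{}^tA^{-1}} \mapsto A$ induces an identification
\begin{eqnarray*}
 \Gamma(\delta_{1,2})\backslash \Gamma_\infty^{(2)}
 &\cong&
 \left(\delta_{1,2}\,GL_2(\Z)\,\delta_{1,2}^{-1} \cap GL_2(\Z)\right) \backslash GL_2(\Z) .
\end{eqnarray*}
Since $F({^tA}\lambda)$ does not depend on $B$, representatives can be taken in the form $\smat{A}{0}{0}{{}^tA^{-1}}$, with $A$ running over the set $\{\smat{1}{0}{x}{1} \mid x \!\!\mod p\} \cup \{\smat{0}{-1}{1}{p}\}$ already used in the proof of Lemma~\ref{lemma:E_lambda}.

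Next I would compute, for each representative $A$, the image ${}^tA\cdot L_{1,2}^{*}$ inside $\Z^{(2,2)}$, with the multiplicity coming from summing over $x$. Writing $\lambda = \smat{a}{b}{c}{d}$ with $a \in p\Z$ and $b,c,d\in \Z$, the action of ${^tA_x} = \smat{1}{x}{0}{1}$ sends $\lambda$ to $\mu = \smat{a+xc}{b+xd}{c}{d}$, so for a fixed $\mu = \smat{\alpha}{\beta}{\gamma}{\delta} \in \Z^{(2,2)}$ the condition $\lambda \in L_{1,2}^{*}$ becomes $\alpha \equiv x\gamma \!\!\mod p$. Summing over $x\in\Z/p\Z$ therefore counts $\mu$ once when $\gamma \not\equiv 0 \!\!\mod p$ and $p$ times when $\gamma \equiv 0 \!\!\mod p$ together with $\alpha \equiv 0 \!\!\mod p$. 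The action of ${^tA_\ast} = \smat{0}{1}{-1}{p}$, on the other hand, yields ${^tA_\ast}\cdot L_{1,2}^{*} = \{\mu \in \Z^{(2,2)} \mid \gamma \in p\Z\}$, each element appearing once.

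Adding these contributions gives
\begin{eqnarray*}
 &&\sum_{\mu:\, \gamma \not\equiv 0 \!\!\mod p} F(\mu)
 + p \sum_{\mu:\, \alpha, \gamma \equiv 0 \!\!\mod p} F(\mu)
 + \sum_{\mu:\, \gamma \equiv 0 \!\!\mod p} F(\mu) \\
 &=& \sum_{\mu \in \Z^{(2,2)}} F(\mu)
   + p \sum_{\mu:\, \alpha, \gamma \in p\Z} F(\mu).
\end{eqnarray*}
Finally, the change of variable $\mu = \mu'\smat{p}{0}{0}{1}$ identifies the lattice $\{\mu \in \Z^{(2,2)} : \alpha, \gamma \in p\Z\}$ bijectively with $\Z^{(2,2)}$, so the second sum becomes $p\sum_{\mu'\in\Z^{(2,2)}} F(\mu'\smat{p}{0}{0}{1})$, which is the right-hand side of the lemma.

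The only real point of care is the multiplicity bookkeeping for the family $\smat{1}{0}{x}{1}$, where the weight $p$ appearing in front of the second summand on the right-hand side is produced by the ``degenerate'' case $\gamma \equiv 0\!\!\mod p$; once this is handled, no genuine obstacle remains, and absolute convergence of $F$ is used only to justify freely interchanging the two sums.
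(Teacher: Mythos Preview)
Your proof is correct and follows essentially the same approach as the paper: reduce the outer sum to the finite set $L(p)=\{\smat{1}{0}{x}{1},\smat{0}{-1}{1}{p}\}$ of representatives for $(\delta_{1,2}GL_2(\Z)\delta_{1,2}^{-1}\cap GL_2(\Z))\backslash GL_2(\Z)$, and then do an elementary fibre count. The only cosmetic difference is that the paper counts, for a fixed target $\mu$ (in fact only its first column $\smat{a}{b}$), the number $\#\{A\in L(p):{}^tA^{-1}\mu\in L_{1,2}^*\}$, obtaining $p+1$ when $a\equiv b\equiv 0\pmod p$ and $1$ otherwise, whereas you compute the image of $L_{1,2}^*$ under each representative separately and then add; the arithmetic is identical.
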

\begin{proof}
A bijection map
 $\Gamma(\delta_{1,2})\backslash \Gamma_{\infty}^{(2)}
   \rightarrow
  \delta_{1,2} GL_2(\Z) \delta_{1,2}^{-1} \cap GL_2(\Z) \backslash GL_2(\Z)$
is given via
 $\smat{A}{B}{0_2}{^t A ^{-1}} \mapsto A$.
Thus
\begin{eqnarray*}
 \sum_{\smat{A}{B}{0_2}{^t A ^{-1}} \in \Gamma(\delta_{1,2})\backslash \Gamma_{\infty}^{(2)}} \sum_{\lambda\in L_{1,2}^*}
 F({^t A} \lambda)
 &=&
 \sum_{A \in \delta_{1,2} GL_2(\Z) \delta_{1,2}^{-1}\cap GL_2(\Z) \backslash GL_2(\Z)}
 \sum_{\lambda\in L_{1,2}^*}
 F({^t A} \lambda).
\end{eqnarray*}

Now we have
$ L_{1,2}^*
 =
 \left\{\left(\begin{smallmatrix}\lambda_1 \\ \lambda_2 \end{smallmatrix}\right) \in \Z^{(2,2)}
 \, | \, 
 \lambda_1 \in p \Z \times \Z, \, \lambda_2 \in \Z \times \Z \right\}.
$
We define
\begin{eqnarray*}
L(p) &:=& \left\{\smat{1}{0}{x}{1}, \smat{0}{-1}{1}{p} \, | \, x \mod p \right\}.
\end{eqnarray*}
Then $L(p)$ is a complete set of representatives of $\delta_{1,2} GL_2(\Z) \delta_{1,2}^{-1}\cap GL_2(\Z) \backslash GL_2(\Z)$.
For $\left(\begin{smallmatrix} a\\b \end{smallmatrix}\right) \in \Z^{(2,1)}$, we define
$S_{\left(\begin{smallmatrix} a\\b \end{smallmatrix}\right)} := \left\{A \in L(p) \, | \, {^t A}^{-1} \left(\begin{smallmatrix} a\\b \end{smallmatrix}\right) \in L_{1,2}^*  \right\}$.
We obtain
\begin{eqnarray*}
 \# S_{\left(\begin{smallmatrix} a\\b \end{smallmatrix}\right)}
 &=&
 \begin{cases}
  p+1 & \mbox{ if } a \equiv b \equiv 0 \!\! \mod p ,\\
  1 & \mbox{ otherwise}.
 \end{cases}
\end{eqnarray*}
Hence
\begin{eqnarray*}
 &&
 \sum_{A \in \delta_{1,2} GL_2(\Z) \delta_{1,2}^{-1}\cap GL_2(\Z) \backslash GL_2(\Z)}
 \sum_{\lambda\in L_{1,2}^*}
 F({^t A} \lambda) \\
 &=&
 (p+1) \sum_{\lambda \in \smat{p\Z}{\Z}{p\Z}{\Z}} F(\lambda)
 +  \sum_{\lambda \in \Z^{(2,2)} \backslash \smat{p\Z}{\Z}{p\Z}{\Z} } F(\lambda)
\\
 &=&
 \sum_{\lambda \in \Z^{(2,2)}} F(\lambda) + p \sum_{\lambda \in \Z^{(2,2)}} F(\lambda \smat{p}{0}{0}{1}).
\end{eqnarray*}
\end{proof}

\begin{lemma}\label{lemma:F2}
Let $F$ be a function on $\Z^{(2,2)}$. Then we obtain
\begin{eqnarray*}
 &&
 \sum_{\smat{A}{B}{0_2}{^t A ^{-1}} \in \Gamma(\delta_{1,1})\backslash \Gamma_{\infty}^{(2)}} \sum_{\lambda \in \smat{p^2\Z}{\Z}{\Z}{\Z}}
   F({^tA} \lambda)
\\
 &=&
 \sum_{\lambda \in \Z^{(2,2)}}F(\lambda)
 + (p-1) \sum_{\lambda \in \Z^{(2,2)}}F(\lambda \smat{p}{0}{0}{1})
 + p^2  \sum_{\lambda \in \Z^{(2,2)}}F(\lambda \smat{p^2}{0}{0}{1}),
\end{eqnarray*}
if the above summations are absolutely convergent.
\end{lemma}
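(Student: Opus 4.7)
The plan is to follow the counting argument used in the proof of Lemma~\ref{lemma:F1}. First, via the bijection $\smat{A}{B}{0_2}{{}^tA^{-1}} \mapsto A$, the outer sum on the left-hand side reduces to a sum over
\[
 A \in \left(\delta_{1,1}\, \mbox{GL}_2(\Z)\, \delta_{1,1}^{-1} \cap \mbox{GL}_2(\Z)\right) \backslash \mbox{GL}_2(\Z),
\]
and since $\delta_{1,1} = \mbox{diag}(1,p^2)$, this intersection is the subgroup of $\mbox{GL}_2(\Z)$ whose $(1,2)$ entry is divisible by $p^2$. The set
\[
 L(p^2) = \left\{\smat{1}{0}{x}{1} : x \bmod p^2 \right\} \cup \left\{ \smat{0}{-1}{1}{py} : y \bmod p\right\},
\]
from the appendix, of cardinality $p^2+p$, furnishes a complete set of coset representatives.

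Next, I interchange the order of summation: the left-hand side equals $\sum_{\mu \in \Z^{(2,2)}} N(\mu) F(\mu)$, where
\[
 N(\mu) := \#\left\{A \in L(p^2) \, : \, {}^tA^{-1}\mu \in \smat{p^2\Z}{\Z}{\Z}{\Z}\right\}.
\]
Writing $\mu = \smat{a}{b}{c}{d}$, the defining condition depends only on the $(1,1)$ entry of ${}^tA^{-1}\mu$ and amounts to $a \equiv xc \pmod{p^2}$ for $A = \smat{1}{0}{x}{1}$, and $c \equiv -pya \pmod{p^2}$ for $A = \smat{0}{-1}{1}{py}$. On the right-hand side, the substitutions $\nu = \lambda\smat{p}{0}{0}{1}$ and $\nu = \lambda\smat{p^2}{0}{0}{1}$ identify the three sums with $\sum_{\mu \in \Z^{(2,2)}} F(\mu)$, $\sum_{\mu \in \smat{p\Z}{\Z}{p\Z}{\Z}} F(\mu)$, and $\sum_{\mu \in \smat{p^2\Z}{\Z}{p^2\Z}{\Z}} F(\mu)$, carrying respective weights $1$, $p-1$, and $p^2$. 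Hence the lemma is equivalent to showing that $N(\mu)$ equals $1$, $p$, or $p^2 + p$ according to whether $\mu \notin \smat{p\Z}{\Z}{p\Z}{\Z}$, $\mu \in \smat{p\Z}{\Z}{p\Z}{\Z} \setminus \smat{p^2\Z}{\Z}{p^2\Z}{\Z}$, or $\mu \in \smat{p^2\Z}{\Z}{p^2\Z}{\Z}$.

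The verification is a routine case analysis on the $p$-adic valuations of $a$ and $c$ modulo $p^2$, organized by whether each of them has valuation $0$, $1$, or $\geq 2$. In each subcase one counts solutions to the two congruences directly: for example, when $v_p(a), v_p(c) \geq 2$ both congruences hold identically, contributing $p^2$ values of $x$ and $p$ values of $y$, hence $N(\mu) = p^2+p$; when $v_p(a) = v_p(c) = 1$ only the first family contributes (with $x$ determined modulo $p$, giving $p$ lifts modulo $p^2$); and when $v_p(a) = 0$ or $v_p(c) = 0$ at most one of the congruences is solvable and admits a unique solution, yielding $N(\mu) = 1$. The only obstacle is the bookkeeping across these subcases; the arithmetic is elementary and the three collapsed patterns recover the stated weights exactly.
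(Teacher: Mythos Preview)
Your proof is correct and follows essentially the same approach as the paper's own proof: both use the representative set $L(p^2)$, reduce the condition to the single congruence on the $(1,1)$ entry of ${}^tA^{-1}\mu$ (which depends only on the first column of $\mu$), and obtain the three-case count $1$, $p$, $p^2+p$ according to the $p$-divisibility of that column. The minor sign in your congruence $c \equiv -pya \pmod{p^2}$ is harmless since $y$ ranges over $\Z/p\Z$.
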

\begin{proof}
We have
\begin{eqnarray*}
 &&
 \sum_{\smat{A}{B}{0_2}{^t A ^{-1}} \in \Gamma(\delta_{1,1})\backslash \Gamma_{\infty}^{(2)}} \sum_{\lambda\in \smat{p^2\Z}{\Z}{\Z}{\Z}}
 F({^t A} \lambda) \\
 &=&
 \sum_{A \in \delta_{1,1} GL_2(\Z) \delta_{1,1}^{-1}\cap GL_2(\Z) \backslash GL_2(\Z)}
 \sum_{\lambda\in \smat{p^2\Z}{\Z}{\Z}{\Z}}
 F({^t A} \lambda).
\end{eqnarray*}

We define
$L(p^2) := \left\{\smat{1}{0}{x}{1}, \smat{0}{-1}{1}{py} \, | \, x \mod p^2, \, y \mod p \right\}$.
Then $L(p^2)$ is a complete set of representatives of $\delta_{1,1} GL_2(\Z) \delta_{1,1}^{-1}\cap GL_2(\Z) \backslash GL_2(\Z)$.
For $\left(\begin{smallmatrix} a\\b \end{smallmatrix}\right) \in \Z^{(2,1)}$,
we define
 $S_{\left(\begin{smallmatrix} a\\b \end{smallmatrix}\right)}
  :=
  \left\{A \in L(p^2) \, | \,
  {^t A}^{-1} \left(\begin{smallmatrix} a\\b \end{smallmatrix}\right) \in \smat{p^2\Z}{\Z}{\Z}{\Z} \right\}
 $.
By a straightforward calculation we obtain
\begin{eqnarray*}
 \# S_{\left(\begin{smallmatrix} a\\b \end{smallmatrix}\right)}
 &=&
 \begin{cases}
  p^2+p & \mbox{ if } p^2|a \mbox{ and } p^2|b ,\\
  1 & \mbox{ if } p {\not|} a \mbox{ or } p {\not|} b ,\\
  p & \mbox{ otherwise}.
 \end{cases}
\end{eqnarray*}
Hence we get
\begin{eqnarray*}
 \\
 &&
 \sum_{A \in \delta_{1,1} GL_2(\Z) \delta_{1,1}^{-1}\cap GL_2(\Z) \backslash GL_2(\Z)}
 \sum_{\lambda\in \smat{p^2\Z}{\Z}{\Z}{\Z}}
 F({^t A} \lambda) \\
 &=&
 \sum_{\lambda \in \Z^{(2,2)}} F(\lambda) +
 (p-1) \sum_{\lambda \in \smat{p\Z}{\Z}{p\Z}{\Z}} F(\lambda) +
 p^2 \sum_{\lambda \in \smat{p^2\Z}{\Z}{p^2\Z}{\Z}} F(\lambda).
\end{eqnarray*}
\end{proof}

\vspace{3cm}

\vspace{1cm}

\noindent
Department of Mathematics, Joetsu University of Education,\\
1 Yamayashikimachi, Joetsu, Niigata 943-8512, JAPAN\\
e-mail hayasida@juen.ac.jp

\end{document}